\numberwithin{equation}{section}
\newtheorem{thm}{Theorem}[section]
\newtheorem{prop}[thm]{Proposition}
\newtheorem{cor}[thm]{Corollary}
\newtheorem{lemma}[thm]{Lemma}
\newtheorem{conj}[thm]{Conjecture}
\newtheorem{remark}{Remark}
\newcommand{\E}{\mathbb{E}}
\renewcommand{\P}{\mathbb{P}}
\newcommand{\RR}{\mathbb{R}}
\newcommand{\R}{\mathbb{R}}
\newcommand{\C}{\mathbb{C}}
\newcommand{\N}{\mathbb{N}}
\newcommand{\sM}{\mathscr{M}}
\newcommand{\sP}{\mathscr{P}}
\newcommand{\e}{\varepsilon}
\newcommand{\Var}{\text{Var}}
\newcommand{\I}{\mathcal I}
\newcommand{\be}{\begin{equation}}
\newcommand{\ee}{\end{equation}}
\newcommand{\ep}{\varepsilon}
\begin{document}
 
\title[Bifurcating limit cycles]{The number of limit cycles bifurcating from a randomly perturbed center}
\date{}

\author[M. Krishnapur, E. Lundberg, O. Nguyen]{Manjunath Krishnapur, Erik Lundberg and Oanh Nguyen}
\address{Department of mathematics \\ Indian Institute of Science \\ Bangalore, Karnataka 560012, India}
\email{manju@iisc.ac.in}
\address{Department of Mathematical Sciences \\ Florida Atlantic University \\ Boca Raton, FL 33431, USA}
\email{elundber@fau.edu}
 \address{Division of Applied Mathematics\\ Brown University\\  Providence, RI 02906, USA}
 \email{oanh\_nguyen1@brown.edu}
 \thanks{Lundberg is supported by Simons grant 712397.  Nguyen is supported by NSF grant DMS-2125031}

\begin{abstract} 
We consider the average number of limit cycles that bifurcate from a randomly perturbed linear center where the perturbation consists of random (bivariate) polynomials with independent coefficients.
This problem reduces, by way of classical perturbation theory of the Poincar\'e first return map, to a problem on the real zeros of a random \emph{univariate} polynomial $\displaystyle f_n(x) = \sum_{m=0}^n c_m \xi_m x^m$ with independent coefficients $\xi_m$ having mean zero, variance 1 and $c_m \sim m^{-1/2}$. 
This polynomial belongs to the class of {\it generalized Kac polynomials} at the critical regime.  We provide asymptotics for the average number of real zeros and answer the question on bifurcating limit cycles. Additionally, we provide the correct order of the mean number of real roots in the subcritical regime.
\end{abstract}

\maketitle

\section{Introduction}

\subsection{From ODE to random polynomials}

In the study of ordinary differential equations (ODEs), a \emph{limit cycle} of a system refers to an isolated closed trajectory -- that is, a trajectory that is isolated among the closed trajectories of the system.  Limit cycles are  fundamental in the qualitative theory of nonlinear ODEs, and, in light of the Poincar\'e-Bendixson theorem, they play an especially important role in understanding the dynamics of planar systems. Since ODE systems with polynomial nonlinearities are common in applications, it is desirable to obtain estimates for the number of limit cycles of polynomial systems in terms of the degree of the involved polynomials.  This problem is in general considered to be profoundly difficult \cite{Smale1998}, and most progress has concerned special settings involving perturbative problems, for instance, estimating the number of limit cycles which bifurcate under a polynomial perturbation of a Hamiltonian system. 

A particularly tractable problem, one that can be reduced to enumerating the real zeros of a univariate polynomial, is the case of a perturbed linear center (here we use the notation $\dot{x} = dx/dt$ to denote differentiation of $x=x(t)$ with respect to $t$) 
\begin{equation}\label{eq:pertcenter}
\begin{cases}
\dot{x} = y + \e p(x,y) \\
\dot{y} = -x + \e q(x,y)
\end{cases}
\end{equation}
with $p,q$ being polynomials, and $\e>0$ a small parameter.
For given $p,q$ of degree $d$, the number of bifurcating limit cycles (i.e., those which persist for all sufficiently small $\e>0$) of this system is at most $n = \lfloor (d-1)/2 \rfloor$.  As we review in Section \ref{sec:LC} below, this upper bound can be seen from classical perturbation theory of the Poincar\'e first return map.
We also note that for each degree $d$, there are known examples attaining this upper bound \cite{Teixeira}.

While the extremal behavior for the number of bifurcating limit cycles in the degree-$d$ perturbed center \eqref{eq:pertcenter} is completely understood, it seems natural to inquire about the \emph{typical} number of bifurcating limit cycles. To be precise, we consider the case when the perturbative terms in \eqref{eq:pertcenter}
\begin{equation}
 p(x,y) = \sum_{1 \leq j+k \leq d} \alpha_{j,k} x^j y^k \quad\text{and} \quad q(x,y) = \sum_{1 \leq j+k \leq d} \beta_{j,k} x^j y^k \nonumber
\end{equation}
are random polynomials with $\alpha_{j,k}, \beta_{j,k}$ being independent random variables with mean zero and variance one.  The study of limit cycles in random polynomial systems was initiated by A. Brudnyi in \cite{Brudnyi1}, \cite{Brudnyi2} and the study of {birfucating} ones was investigated by the first author in \cite[Sec. 1.3, Sec. 5]{LC} (see Section \ref{sec:prior}).  However, the above natural choice of randomness was left as an open problem.
 
As we recall in Section \ref{sec:LC}, this problem reduces to studying the number of positive real zeros of the random polynomial
\begin{equation}
f_n(x) = \sum_{m=0}^n c_m \xi_m x^m,\nonumber
\end{equation}
where $n=\lfloor (d-1)/2 \rfloor$, and the coefficients $\xi_m$ are independent with mean zero and variance 1, and the deterministic coefficients $c_m$ are given by (here we use the ``double-factorial'' notation $m!!$ to denote the product of all
positive integers less than or equal to $m$ that have the same parity as $m$)
\begin{equation}\label{eq:sigma_exact}
c_m^2 = \frac{\pi}{2} \sum_{\ell=0}^m
\left(\frac{(2m-2\ell+1)!!(2\ell-1)!!}{(2m+2)!!}\right)^2 + 
 \left(\frac{(2m-2\ell-1)!!(2\ell+1)!!}{(2m+2)!!}\right)^2 ,
\end{equation}
which satisfies (see Section \ref{sec:sigma} below) 
\begin{equation}\label{eq:sigma}
c_m^2 = m^{-1} + O(m^{-2}) \quad \text{as } m \rightarrow \infty.
\end{equation}

The polynomial $f_n$ belongs to the family of {\it generalized Kac polynomials}
\begin{equation}\label{def:f:rho}
f_{n, \rho}(x) = \sum_{m=0}^n c_{m, \rho} \xi_m x^m,
\end{equation}
where $|c_{m, \rho}|$ has order $m^{\rho}$ and $\rho$ is a constant. In other words, the coefficients have a power law growth. We note that \eqref{eq:sigma} corresponds to $$\rho=-1/2,$$
which turns out to be ``critical'' in a sense that we will explain in Section \ref{sec:genKac}.
Here, we note that throughout the paper, the $c_m$ and $c_{m, \rho}$ are allowed to depend on $n$. To give more context, let us provide a (largely incomplete) literature review concerning the number of real roots of this family.

\subsection{Previous results on real roots of generalized Kac polynomials}\label{sec:genKac}

This family has a rich history starting with a series of papers in the early 1900s examining the number of real roots of the Kac polynomials 
$$f_{n, 0}(x) := \sum_{m=0}^n \xi_m x^m.$$ 

In 1932, Bloch and Polya  \cite{BP} considered the Kac polynomial with $\xi_m$ being 
Rademacher, namely $$\P(\xi_m=1)=\P(\xi_m=-1)=1/2$$ and showed that the number of real roots is typically of order $O(\sqrt n)$. 
In the early 1940s, Littlewood and Offord in their ground-breaking series of papers \cite{LO1, LO2, LO3} showed that the number of real roots is in fact poly-logarithmic in $n$.

In 1943, Kac \cite{Kac1943average} discovered his famous formula, nowadays known as the Kac-Rice formula, and for the first time derived the precise asymptotics for the mean number of real roots of the Kac polynomials, with an extra assumption that the random variables $\xi_m$ are iid standard Gaussian. In this case, the mean number of real roots turned out to be 
 $$\left (\frac{2}{\pi} +o(1)\right ) \log n. $$

When the $\xi_m$ are non-Gaussian, the Kac-Rice formula is often hard to handle. 
In fact, the computation of the mean number of real roots for discrete random variables $\xi_m$ required considerable efforts. In 1956, Erd\H{o}s and Offord \cite{EO} found a beautiful yet delicate combinatorial approach to handle the case that the $\xi_m$ are Rademacher, proving that in this case, the number of real roots of the Kac polynomial is $\left (\frac{2}{\pi} +o(1)\right ) \log n$ with high probability. Ibragimov and Maslova in 1960s \cite{IM1, IM2}  successfully extended  the method of Erd\H{o}s-Offord  to treat
the Kac polynomial associated with more general distributions of the $\xi_m$. They showed that if the $\xi_m$ are iid and belong to the domain of attraction of the normal law, then the mean number of real roots of the Kac polynomial $f_{n, 0}$ is also $\left (\frac{2}{\pi} +o(1)\right ) \log n$.

Passing to the generalized Kac family of $(f_{n, \rho})$ which contains all the derivaties of the Kac polynomials and the hyperbolic polynomials, Do, Vu and the second author \cite{DOV} developed the  local universality method, which was initiated in the seminal work of Tao and Vu \cite{TVpoly}, and proved that if the $\xi_m$ are independent with mean 0, variance 1 and finite $(2+\ep_0)$-moments, and $c_{m, \rho} \sim m^{\rho}$ where $\rho>-1/2$ then 
\begin{equation} 
\E N_{f_{n, \rho}}(\R) = (1+o(1)) \frac{1+\sqrt{1+2\rho}}{\pi} \log n ,\nonumber
\end{equation}  
where $N_{f}(I)$ denotes the number of  roots, counted with multiplicity, of a function $f$ in a set $I$. See also \cite{HoiDN, nguyenvuCLT}. The idea of the local universality method is to show that locally, the distribution of the roots of $f_{n, \rho}$ does not depend much on the specific distributions of the $\xi_m$. And so, one can reduce the computation of $\E N_{f_{n, \rho}}$ for general $\xi_m$ to the case that $\xi_m$ are iid standard Gaussian. The latter case can then be handled by the Kac-Rice formula, for instance.

 For $\rho>-1/2$, it is well-known that the roots concentrate near the unit circle and the real roots concentrate near $\pm 1$. So to see why the condition that $\rho>-1/2$ is crucial for this approach, let us look at $f_{n, \rho}(1) = \sum_{m=0}^{n} c_m \xi_m =\sum_{m=0}^{n} \Theta(m^{\rho}) \xi_m$. Its variance is $\sum_{m=0}^{n} \Theta(m^{2\rho}) = n^{\Theta(1)}$ which grows quickly and accounts for the universal behavior of $f_{n, \rho}(1)$, by the classical Central Limit Theorem. For $\rho<-1/2$, this variance is bounded and so for $m$ small, the component $c_m \xi_m$ contributes significantly to $f_{n, \rho}(1)$. Thus, one does not expect that the distribution of $f_{n, \rho}(1)$ is universal, let alone the distribution of its roots. The case that $\rho=-1/2$ is the critical case where the variance of $f_{n, \rho}(1)$ goes to infinity but at a slow rate of $\log n$, as compared to the previous $n^{\Theta(1)}$, which poses significant difficulties. As we have noted above, this is precisely the case encountered in the problem on bifurcating limit cycles, so the first main goal of the current paper will be to adapt the universality method to study the ciritical case $\rho = -1/2$. 
A second main goal of the paper is to provide a panoramic view of the whole generalized Kac model by also addressing the sub-critical case $\rho<-\frac12$ (the sub-critical and super-critical regimes are unrelated to the problem on limit cycles in the way that we have formulated it, but they may arise if the coefficients in the initial set up of the ODE system \eqref{eq:pertcenter} have power law behavior).
 
In parallel, Flasche and Kabluchko \cite{FK, flasche2020real} studied a closely related model of random series with regularly varying coefficients
\begin{equation} 
f_{\infty, \rho}(x) = \sum_{m=0}^{\infty}  c_{m, \rho}\xi_m x^{m}\nonumber
\end{equation}
where the $c_m$ are real deterministic coefficients such that 
\begin{equation}\label{def:varying}
|c_{m, \rho}|= m^{\rho} L(m)
\end{equation}
with $L$ being a slowly varying function. Their result is also restricted to the case that $\rho>-1/2$. Even though their elegant method is different from  the universality method in \cite{DOV}, they also relied on the key property that the function $f_{\infty}(x)$, after being appropriate rescaled, converges to a Gaussian process. This property is a form of universality which is no longer available in the subcritical regime $\rho<-1/2$ and is delicate to achieve at the critical point $\rho = -1/2$.
  
  \subsection{Our results}
As mentioned in the above section, hardly anything is known about the case $\rho\le -1/2$. The only results that we are aware of are by Dembo and Mukherjee \cite{Dembo} and Flasche's thesis \cite{flaschethesis}. In \cite{Dembo}, the authors studied the probability that generalized Kac polynomials have no real roots, which is also known as the persistence probability. In \cite{flaschethesis}, the author studied the mean number of real roots of $f_{\infty, \rho}$ when $\rho=-1/2$. Both papers required the random variables $\xi_m$ to be {\it iid standard Gaussian}.

 In this paper, we consider the general setting where the random variables are not necessarily Gaussian nor necessarily identically distributed. As customary, we assume that the random variables have mean 0, variance 1 and bounded $(2+\ep_0)$-moments. More formally, we define
 
\noindent{\bf Assumption-A}  with parameters $\ep_0>0$ and $C_{2+\ep_0}<\infty$ is said to be satisfied by a real valued random variable $\xi$ if it has zero mean, unit variance and $\E[ |\xi|^{2+\ep_0}]<C_{2+\ep_0}$.

Next, we formulate and generalize equation \eqref{eq:sigma} to general growth power.

 \noindent{\bf Assumption-B} with parameters $\rho\in \R$ and a sequence $a=(a_m)_{m\ge 0}\to 0$   is said to be satisfied by a triangular array $(c_{m,n})_{0\le m\le n}$ of complex numbers if $|m^{-\rho}|c_{m, n}|-1| \le a_m$ for all $m,n$.

 Here is our main result about the mean number of real roots of the generalized Kac polynomial in the critical case $\rho = -1/2$.

\begin{thm}[$\rho =-1/2$]\label{thm:main:critical}
	Let $\ep_0$, $C_{2+\ep_0}$ be positive constants and $(a_m)$ be a sequence converging to $0$. Let
	$$f_n(x) = \sum_{m=0}^n c_{m, n} \xi_m x^m, $$
	be a random polynomial where $\xi_m$ are independent  random variables satisfying Assumption-A with parameters $\ep_0$ and $C_{2+\ep_0}$. Assume that the deterministic coefficients $(c_{m, n})_{0\le m\le n}$ satisfy Assumption-B with parameters $\rho=-\frac12$ and $(a_m)$. 
Then 
	\begin{equation}\label{eq:1toinfinity}
	\E N_{f_n}(1,\infty) = \frac{1+o(1)}{2\pi}\log n,\quad \E N_{f_n}(-\infty, -1) = \frac{1+o(1)}{2\pi}\log n,
	\end{equation}
	\begin{equation}\label{eq:01:uni}
	\E N_{f_n}[-1, 1 ]= o(\log n).
	\end{equation}
Therefore, 
		\begin{equation}\label{eq:0toinfinity}
	\E N_{f_n}(0,\infty) = \frac{1+o(1)}{2\pi}\log n \quad \text{and}\quad \E N_{f_n}(\R) = \frac{1+o(1)}{\pi}\log n.
	\end{equation}
\end{thm}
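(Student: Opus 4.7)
The plan is to first establish the three claims under the assumption that the $\xi_m$ are i.i.d.\ standard Gaussian, via the Kac--Rice formula, and then to transfer to general coefficients using an adaptation of the local universality framework of \cite{DOV, TVpoly}.  The statement for $(-\infty,-1)$ follows from that for $(1,\infty)$ by the symmetry $x\mapsto -x$ (which preserves the joint law of $(\xi_m)$), and \eqref{eq:0toinfinity} is immediate from \eqref{eq:1toinfinity} and \eqref{eq:01:uni}.

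\textbf{Gaussian computation.} In the Gaussian case, $f_n$ is a centered Gaussian polynomial with covariance $K_n(x,y)=\phi_n(xy)$, where $\phi_n(u):=\sum_{m=0}^n c_{m,n}^2 u^m$; by Assumption-B, $\phi_n$ agrees with $\sum_{m=1}^n u^m/m$ up to lower-order terms.  A short manipulation of Kac--Rice expresses the density of real zeros as
\begin{equation*}
\rho_n(x)=\frac{1}{\pi}\sqrt{(u\psi_n'(u))'}\,\Big|_{u=x^2},\qquad \psi_n:=\log\phi_n.
\end{equation*}
The decisive computation for \eqref{eq:1toinfinity} is to show that, uniformly for $u>1$ with $u-1\ge\delta_n:=(\log n)/n$,
\begin{equation*}
(u\psi_n'(u))' = \frac{1}{(u-1)^2}\,(1+o(1)) \qquad (n\to\infty).
\end{equation*}
This follows by expanding $\phi_n$, $\phi_n'$, and $(u\phi_n')'$ to two orders of asymptotic accuracy: both $(u\phi_n')'/\phi_n$ and $u(\phi_n'/\phi_n)^2$ have leading term $n^2/u$ and subleading term $-2n\alpha/u$ with $\alpha=1/(u-1)$, so these cancel, and the surviving $O(1)$ pieces combine to $\alpha(\alpha+1)/u=1/(u-1)^2$.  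Consequently $\rho_n(x)=(1+o(1))/(\pi(x^2-1))$ on $(1+\delta_n,\infty)$, and integrating gives $\int_{1+\delta_n}^{\infty}\rho_n(x)\,dx = \frac{1+o(1)}{2\pi}\log n$.  The boundary layer $(1,1+\delta_n)$ is handled by splitting at $1+C/n$: on the inner piece a crude bound $\rho_n(x)=O(n/\sqrt{\log n})$ yields contribution $O(1/\sqrt{\log n})$, and on the outer piece the $1/(\pi(x^2-1))$ bound yields $O(\log \log n)$.  For \eqref{eq:01:uni} on $[-1,1]$, one uses that for $|xy|<1$ fixed, $\phi_n(xy)\to -\log(1-xy)$; the limiting Gaussian process has integrable real-zero density on each compact subinterval of $(-1,1)$, which is essentially the $\rho=-1/2$ computation of \cite{flaschethesis}, giving $O(1)$ bulk contribution, while the boundary layers of width $\delta_n$ are bounded by the same argument as above.

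\textbf{Universality and main obstacle.}  To pass from Gaussian to general $\xi_m$ satisfying Assumption-A, the plan is a Lindeberg-style exchange following \cite{DOV}: swap the $\xi_m$ one at a time with i.i.d.\ standard Gaussians and bound the change in $\E N_{f_n}(I)$ at each step using anti-concentration for $f_n$ at suitable test points.  Away from $x=\pm 1$, i.e., for $|x\mp 1|>\delta_n$, the variance $\Var(f_n(x))$ grows at least polynomially (and exponentially for $|x|>1$), so the replacement argument of \cite{DOV, TVpoly} applies verbatim to reproduce the main $\frac{\log n}{2\pi}$ contribution.  The \emph{main obstacle}, specific to the critical regime $\rho=-1/2$, is the narrow window around $\pm 1$ where $\Var(f_n(\pm 1))\sim\log n$ is critical and the CLT-based anti-concentration estimates of \cite{DOV} degrade; here one must develop a refined anti-concentration bound for $f_n$ calibrated to the logarithmic variance growth, and combine it with an elementary counting argument to show that the expected number of real roots inside each $\delta_n$-window is $o(\log n)$, uniformly over the $\xi_m$ allowed by Assumption-A.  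Putting the bulk universality and the boundary-window bound together completes the proof of \eqref{eq:1toinfinity} and \eqref{eq:01:uni}; \eqref{eq:0toinfinity} follows by addition.
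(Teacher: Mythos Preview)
Your high-level strategy---Gaussian via Kac--Rice, then universality---matches the paper's, and your direct Kac--Rice computation on $(1,\infty)$ is a reasonable alternative to the paper's route (which transforms $f_n$ to $g_n(x)=x^nf_n(1/x)/c_n$ and reduces to an analysis on $(0,1)$). However, there is a genuine gap in the universality step.

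Your claim that for $|x-1|>\delta_n$ ``the variance $\Var(f_n(x))$ grows at least polynomially'' and hence ``the replacement argument of \cite{DOV, TVpoly} applies verbatim'' is incorrect for $x\in(0,1)$. There $\Var(f_n(x))=\sum_m c_{m,n}^2 x^{2m}\approx -\log(1-x^2)$, which is \emph{bounded} for fixed $x<1$ and grows only like $\log\frac{1}{1-x}$ as $x\to 1^-$. This slow logarithmic growth is precisely the criticality of $\rho=-\tfrac12$, and it is exactly why the verification of the universality conditions from \cite{DOV} does \emph{not} go through verbatim anywhere on the core interval inside $(0,1)$---not merely in a window of width $\delta_n$.

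You have also misidentified the main technical obstacle. Anti-concentration near $\pm 1$ is \emph{not} the bottleneck: it follows from a standard lacunary-type small-ball lemma applied to $\sim\log L$ indices (with $L\sim(1-x_0)^{-1}$). The real difficulty, as the paper's proof-ideas discussion makes explicit, is the \emph{boundedness} condition in the universality scheme: one needs $\sup_{z\in B(x_0,r)}|f_n(z)|$ to be at most $\exp(\delta^{-c_0})$ with $\delta=(\log L)^{-1}$, i.e., subexponential in $\log L$. The crude bound $\sum_m|c_m\xi_m|(x_0+r)^m\sim(1-x_0)^{-1/2}=L^{1/2}$ is far too large relative to any power of $\log L$, and an $\ep$-net argument via $f_n'$ fails because $f_n'$ lives in the supercritical regime $\rho=\tfrac12$ and is typically much larger than $f_n$. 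The paper's key new ingredient (Section~\ref{sec:cond:bddn:0}) is to Taylor-expand $f_n$ around $x_0$ to order $m=\lfloor\log L\rfloor$: the first $m$ derivatives at the single point $x_0$ are controlled via Chebyshev on their variances, while the order-$m$ remainder is handled crudely, the factor $(rL)^m$ absorbing the polynomial loss. Without this device or an equivalent, the comparison with the Gaussian model on $\mathcal I_n\subset(0,1)$ cannot be carried out, and your reduction for \eqref{eq:01:uni} is incomplete.
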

\begin{remark} The  constants implicit in the $o(1)$ terms in equations \eqref{eq:1toinfinity}, \eqref{eq:01:uni} and \eqref{eq:0toinfinity} depend only on  the parameters in Assumptions A and B (i.e., on $\ep_0$, $C_{2+\ep_0}$, $(a_m)$). For the rest of the paper, we refer to $(a_m)$ as ``the rate of convergence of $m^{-\rho}|c_{m, n}|$".
\end{remark}

Applying {Theorem \ref{thm:main:critical}} to the setting of limit cycles, we obtain the following answer to the problem posed at the beginning of the paper.
\begin{cor}\label{cor:limit:cycle}
	Suppose $\alpha_{j,k}, \beta_{j,k}$ are independent random variables satisfying Assumption-A with parameters $\ep_0>0$ and $C_{2+\ep_0}<\infty$. 
	Then the average number of bifurcating limit cycles of the system \eqref{eq:pertcenter}
	is asymptotically $\frac{1}{2\pi} \log d$.
	Moreover, the average number of bifurcating limit cycles residing inside the unit disk is $o(\log d).$
\end{cor}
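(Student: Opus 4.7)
The plan is to reduce the corollary to a direct application of Theorem~\ref{thm:main:critical}: having set up the problem in the introduction so that bifurcating limit cycles correspond to positive real zeros of $f_n$, essentially all that remains is to verify that the $\xi_m$ and $c_{m,n}$ arising in the reduction satisfy Assumptions A and B of the theorem, then convert between $n$ and $d$. First I would invoke the reduction summarized in the introduction and detailed in Section~\ref{sec:LC}: the bifurcating limit cycles of \eqref{eq:pertcenter} are in bijection (with multiplicity) with the positive real zeros of $f_n(x) = \sum_{m=0}^n c_{m,n} \xi_m x^m$, where $n = \lfloor (d-1)/2\rfloor$, the $c_{m,n}$ are given by \eqref{eq:sigma_exact}, and the $\xi_m$ are suitably normalized linear combinations of the $\alpha_{j,k}, \beta_{j,k}$.

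Next I would verify the two assumptions. For Assumption-B, the estimate \eqref{eq:sigma} gives $c_{m,n}^2 = m^{-1}+O(m^{-2})$, equivalently $|m^{1/2}|c_{m,n}|-1| = O(m^{-1})$, so Assumption-B holds with $\rho = -1/2$ and $a_m = O(m^{-1}) \to 0$. For Assumption-A, each $\xi_m$ can be written as $\xi_m = \sum_i w_i \eta_i$, where $\eta_i$ ranges over a finite subset of $\{\alpha_{j,k},\beta_{j,k}\}$ and the normalization gives $\sum_i w_i^2 = 1$. Unit variance and zero mean are automatic. Independence across $m$ follows from the block structure of the Melnikov coefficients: the coefficient of $r^{2m+1}$ in the Poincar\'e-Pontryagin function only involves $\alpha_{j,k},\beta_{j,k}$ with $j+k$ in a fixed range depending on $m$, and these ranges are disjoint for distinct $m$. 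The $(2+\ep_0)$-moment bound follows from Rosenthal's inequality: since $|w_i|\le 1$, one has $\sum_i |w_i|^{2+\ep_0} \le \sum_i w_i^2 = 1$, which gives $\E|\xi_m|^{2+\ep_0} \le C'_{2+\ep_0}$ depending only on $\ep_0$ and $C_{2+\ep_0}$.

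Finally I would apply the theorem. The asymptotic \eqref{eq:0toinfinity} yields $\E N_{f_n}(0,\infty) = \frac{1+o(1)}{2\pi}\log n$, and since $n = \lfloor (d-1)/2\rfloor$ satisfies $\log n = \log d + O(1)$, we obtain the first claim: the mean number of bifurcating limit cycles is $\frac{1+o(1)}{2\pi}\log d$. For the second claim, note that under the radial parametrization of the return map the limit cycles inside the unit disk correspond (for small $\ep$) to positive zeros of $f_n$ in $(0,1)$, so \eqref{eq:01:uni} gives $\E N_{f_n}(0,1) \le \E N_{f_n}[-1,1] = o(\log n) = o(\log d)$. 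The only mildly nontrivial step is the moment bound verification in Assumption-A, which is the main obstacle; everything else is bookkeeping from the reduction and from matching the parameter $\rho = -1/2$ to the critical-case theorem.
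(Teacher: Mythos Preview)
Your proposal is correct and follows the same overall architecture as the paper's proof: reduce to Theorem~\ref{thm:main:critical} via the Melnikov computation in Section~\ref{sec:LC}, check Assumption-B from \eqref{eq:sigma}, check Assumption-A for the normalized coefficients $\xi_m$, and then read off the two conclusions from \eqref{eq:0toinfinity} and \eqref{eq:01:uni}. The one substantive difference is the verification of the $(2+\ep_0)$-moment bound. The paper applies Minkowski's inequality to $c_{m,n}\xi_m=\sum(a_{k,m}\alpha_{j,k}+a_{k+1,m}\beta_{j,k})$ and then needs the explicit double-factorial formula \eqref{eq:a} to show $\sum_k a_{k,m}\ll m^{-1/2}$, mirroring the variance computation in Section~\ref{sec:sigma}. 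Your route via Rosenthal's inequality is cleaner and more robust: it uses only that $\xi_m$ is a normalized (i.e., $\sum_i w_i^2=1$) linear combination of independent mean-zero variables with a uniform $(2+\ep_0)$-moment bound, and hence requires no knowledge of the specific weights $a_{k,m}$ beyond what is already encoded in the normalization. The paper's approach has the minor virtue of being elementary (only the triangle inequality in $L^{2+\ep_0}$), but it would have to be redone if the combinatorics of the Melnikov coefficients changed; yours would not. Two small cosmetic points: the relevant power of $r$ is $r^{2m+2}$ rather than $r^{2m+1}$, and the index set is exactly $j+k=2m+1$ rather than a range---neither affects the argument.
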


We can also apply Theorem \ref{thm:main:critical} to so-called Li\'enard ODE systems 
\begin{equation}\label{eq:Lienard}
\begin{cases}
	\dot{x} = y - \e p(x) \\
	\dot{y} = -x
\end{cases},
\end{equation}
where $p(x)$ is a random polynomial
\begin{equation}\label{eq:p}
p(x) =  \sum_{k=1}^d \alpha_k x^k,
\end{equation}
with independent coefficients, and we consider  again  the \emph{bifurcating} limit cycles, i.e., those which persist for arbitrarily small $\e>0$. Interestingly, as stated in the following corollary, the perturbed system \eqref{eq:Lienard} results in the same asymptotic as \eqref{eq:pertcenter} even though the perturbation in \eqref{eq:Lienard} depends on a single variable and only affects a single component of the ODE system.

\begin{cor}\label{cor:Lienard}
	Suppose the coefficients $\alpha_k$ in \eqref{eq:p} are  independent random variables satisfying Assumption-A with parameters $\ep_0>0$ and $C_{2+\ep_0}<\infty$. 
	The average number of bifurcating limit cycles of the system \eqref{eq:pertcenter}
	is asymptotically $\frac{1}{2\pi} \log d$ as $d \rightarrow \infty$.
%	Moreover, the average number of bifurcating limit cycles residing inside the unit disk is asymptotic to $\frac{1}{\pi} \sqrt{\log d}.$
\end{cor}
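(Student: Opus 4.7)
The plan is to mirror the approach used for Corollary \ref{cor:limit:cycle}: compute the first-order Melnikov (Poincar\'e--Pontryagin) function of \eqref{eq:Lienard}, show that its positive real zeros count the bifurcating limit cycles and reduce to a univariate random polynomial, verify that this polynomial falls into the critical generalized Kac class with $\rho=-1/2$, and then invoke Theorem \ref{thm:main:critical}. The unperturbed system has Hamiltonian $H=(x^2+y^2)/2$ with circular orbits, so classical perturbation theory gives the Melnikov function
$$M(r)=\oint_{x^2+y^2=r^2}(-p(x))\,dy=-r\int_0^{2\pi}p(r\cos\theta)\cos\theta\,d\theta.$$
Substituting $p(x)=\sum_{k=1}^d\alpha_k x^k$ and using the parity identity $\int_0^{2\pi}\cos^{k+1}\theta\,d\theta=0$ for even $k$, only odd powers $k=2j+1$ survive, so
$$M(r)=-r^2\sum_{j=0}^{n}\alpha_{2j+1}\,c_j\,r^{2j},\qquad c_j:=\int_0^{2\pi}\cos^{2j+2}\theta\,d\theta=2\pi\frac{(2j+1)!!}{(2j+2)!!},$$
with $n=\lfloor(d-1)/2\rfloor$. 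Setting $u=r^2$, the positive real zeros of $M$ are in bijection with the positive real zeros of the univariate random polynomial
$$g(u)=\sum_{j=0}^n c_j\,\alpha_{2j+1}\,u^j.$$

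Next I would verify that $g$ (after a harmless global rescaling) satisfies the hypotheses of Theorem \ref{thm:main:critical}. The inputs $\alpha_{2j+1}$ are independent and satisfy Assumption-A by hypothesis. For the deterministic coefficients, Stirling's formula (equivalently the Wallis asymptotic $(2\ell-1)!!/(2\ell)!!\sim 1/\sqrt{\pi\ell}$) yields $c_j=2\sqrt{\pi}\,j^{-1/2}(1+O(1/j))$. Dividing $g$ by the constant $2\sqrt{\pi}$ does not alter its zero set, and the rescaled deterministic coefficients $c_j/(2\sqrt{\pi})$ satisfy Assumption-B with $\rho=-1/2$ and an explicit rate $a_j=O(1/j)$. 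Theorem \ref{thm:main:critical} therefore gives
$$\E\, N_g(0,\infty)=\frac{1+o(1)}{2\pi}\log n=\frac{1+o(1)}{2\pi}\log d,$$
using $n\sim d/2$ and $\log n=\log d+O(1)$.

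To translate back to limit cycles, one invokes the standard fact that a simple positive zero $u_*$ of the first Melnikov function produces, by the implicit function theorem applied to the displacement map, a limit cycle of \eqref{eq:Lienard} near the circle $r=\sqrt{u_*}$ which persists for all small $\e>0$; conversely, every bifurcating limit cycle arises this way. The main technical nuisance is handling non-simple zeros of $g$ (which could in principle arise for discrete coefficient distributions permitted by Assumption-A). However, Theorem \ref{thm:main:critical} counts zeros with multiplicity and the corollary's conclusion is an asymptotic of order $\log d$, so the contribution of degenerate zero configurations can be absorbed into the $o(\log d)$ error term provided one shows those configurations are exceptional; this is the step that most closely parallels the proof of Corollary \ref{cor:limit:cycle} and where the bulk of the work lies. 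The rest is bookkeeping: identifying the correct Melnikov integral, checking the Wallis asymptotic to confirm the critical exponent $\rho=-1/2$, and verifying that the rescaling does not perturb the conclusion of Theorem \ref{thm:main:critical}.
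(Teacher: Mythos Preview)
Your proposal is correct and follows essentially the same route as the paper: compute the Melnikov integral for the Li\'enard system, observe that only odd powers of $x$ contribute so that $\sM(r)=r^2 g(r^2)$ with $g(u)=\sum_{j=0}^n c_j\alpha_{2j+1}u^j$, verify via the Wallis asymptotic that $c_j\sim j^{-1/2}$ (the paper normalizes by $2\sqrt{\pi}$ up front, you divide it out afterward---same effect), and then apply Theorem~\ref{thm:main:critical} to $\E N_g(0,\infty)$.

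The one place you diverge is in your final paragraph, where you flag the handling of non-simple zeros as ``the step\ldots where the bulk of the work lies.'' In the paper this is not a work-intensive step at all: the correspondence between bifurcating limit cycles and positive real zeros of $\sM$ is simply imported from Theorem~\ref{thm:LC}, which requires only that $\sM$ not vanish identically (and this is immediate since, e.g., $\alpha_1$ has variance one). So you can drop that concern and just cite the same reduction.
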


One natural question that remains unanswered in Theorem~\ref{thm:main:critical} is the precise order of growth for the expected number of roots in $[ -1,1]$. For the case of Gaussian coefficients,  this quantity is $O(\sqrt{\log n})$ (Theorem~\ref{thm:gaussian} ). We conjecture that the same should hold for general random variables at  $\rho=-\frac12$ (Conjecture~\ref{conj:critical}). Another natural question is what happens for $\rho<-\frac12$. In Theorem~\ref{thm:main:sub} we show that for general random variables, the expected number is $O(1)$. Together with the results from \cite{DOV} covering the super-critical case, Theorems \ref{thm:main:critical} and \ref{thm:main:sub} cover the full range of power-law rates $\rho \in \RR$.  The following table presents a summary of what we now know for general random variables.
\begin{table}[h!]
	\begin{center}
		\label{tab:table1}
		\begin{tabular}{c|c|c} 
%			\textbf{Value 1} & \textbf{Value 2} & \textbf{Value 3}\\
			$\rho$ & $\E N(0, 1)$ & $\E N(1, \infty)$ \\
			\hline
			 $>-1/2$& $\frac{\sqrt{2\rho+1}+o(1)}{2\pi}\log n$  & $\frac{1+o(1)}{2\pi}\log n$\\
			$=-1/2$ & $o(\log n)^{1-\ep}$ & $\frac{1+o(1)}{2\pi}\log n$\\
			$<-1/2$ & $\Theta(1)$ & $\frac{1+o(1)}{2\pi}\log n$\\ \hline
		\end{tabular}
	\end{center}
		\caption{Expected number of real zeros of random polynomials with general coefficient distributions.}
\end{table}

The precise growth rate for the Gaussian setting is as follows.
 \begin{thm}[Gaussian] \label{thm:gaussian} Let $(a_m)$ be a sequence converging to 0 and $\rho\in \R$. Assume that $(c_{m,n})_{0\le m\le n}$ satisfy Assumption-B with parameters $\rho$ and $(a_m)$. Assume that $\xi_m$ are i.i.d. standard Gaussians. Then for \begin{equation} 
	f_{n, \rho}(x) = \sum_{m=0}^n c_{m, \rho} \xi_m x^m,\nonumber
	\end{equation}
	we have
	\begin{equation}\label{eq:gau}
	\E N_{f_{n, \rho}}(0, 1) = \begin{cases}
	\frac{\sqrt{2\rho+1}+o(1)}{2\pi}\log n \quad\text{if $\rho>-1/2$}\\
	\frac{1+o(1)}{\pi} \sqrt{\log n} \quad\text{if $\rho=-1/2$}\\
	\Theta(1) \quad\text{if $\rho<-1/2$}.
	\end{cases}
	\end{equation}
\end{thm}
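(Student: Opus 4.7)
The plan is to apply the Kac-Rice formula directly, leveraging the exact Gaussian structure to reduce the computation to an Abelian-type analysis of the single generating function $g_n(u) := \sum_{m=0}^{n} c_{m,n}^2 u^m$. Since $f_{n,\rho}$ is a centered Gaussian process on $\R$ with covariance kernel $K_n(s,t) = g_n(st)$, one checks that $\partial_s\partial_t \log K_n(s,t)\big|_{s=t=x} = h_n'(x^2) + x^2 h_n''(x^2)$, where $h_n := \log g_n$. The Kac-Rice density is therefore $\rho_n(x) = \frac{1}{\pi}\sqrt{h_n'(x^2)+x^2 h_n''(x^2)}$, and the substitution $u=x^2$ gives
\begin{equation*}
\E N_{f_{n,\rho}}(0,1) \;=\; \frac{1}{2\pi}\int_0^1 u^{-1/2}\sqrt{(uh_n'(u))'}\,du.
\end{equation*}

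The second step is to extract Abelian asymptotics for $g_n, g_n', g_n''$ as $u\to 1^-$. Writing $c_{m,n}^2 = m^{2\rho}(1+\tilde a_m)$ with $\tilde a_m \to 0$ coming from Assumption-B, the main sums $\sum_{m=1}^n m^{2\rho+j} u^m$ (for $j=0,1,2$) satisfy, in the window $1-u\gg 1/n$, the classical asymptotics $\Gamma(2\rho+j+1)(1-u)^{-(2\rho+j+1)}$ when $2\rho+j>-1$, the boundary behavior $-\log(1-u)$ when $2\rho+j=-1$, and convergent limits otherwise. The perturbation $\sum_{m=1}^n m^{2\rho+j}\tilde a_m u^m$ is of smaller order than the main sum, as seen by splitting the index set at a threshold $M=M(n)\to\infty$ chosen so that both $\sup_{m\ge M}|\tilde a_m|$ and the low-index remainder are negligible compared with the main term.

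The third step is to split $[0,1]$ into $[0,1-\delta]$, $[1-\delta,1-1/n]$, and $[1-1/n,1]$ and compute in each regime. On $[0,1-\delta]$ the integrand is uniformly $O(1)$. In the middle window, for $\rho>-1/2$ one has $(uh_n'(u))' \sim (2\rho+1)(1-u)^{-2}$, which integrates to $(\sqrt{2\rho+1}+o(1))\log n$ and thus contributes $\frac{\sqrt{2\rho+1}+o(1)}{2\pi}\log n$. For $\rho=-1/2$ one has $(uh_n'(u))' \sim (1-u)^{-2}/L(u)$ with $L(u):=-\log(1-u)$, and the substitution $v=L(u)$ converts the middle integral into $\int_0^{\log n} v^{-1/2}\,dv = 2\sqrt{\log n}(1+o(1))$, yielding $\frac{1+o(1)}{\pi}\sqrt{\log n}$. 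Crude bounds $g_n^{(j)}(1)\le \sum_{m\le n} m^{2\rho+j}$ combined with the $1/n$ length show that the saturated tail $[1-1/n,1]$ contributes only $O(1)$ in both cases. For $\rho<-1/2$, $g_n$ and its first two derivatives converge uniformly on compact subsets of $[0,1)$ to those of $g_\infty(u):=\sum_{m\ge 1}m^{2\rho}u^m$, so $\rho_n(x)$ converges pointwise to a finite positive intensity $\rho_\infty(x)$; moreover the near-edge estimate $\sqrt{(uh_n')'}=O((1-u)^{-(\rho+3/2)})$ is integrable (since $\rho+3/2<1$), and dominated convergence gives $\E N(0,1)\to \int_0^1 \rho_\infty(x)\,dx$, a strictly positive finite constant, yielding the $\Theta(1)$ conclusion.

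The main technical obstacle will be controlling the Assumption-B perturbation $(\tilde a_m)$ uniformly across the transition window $u\approx 1-1/n$, where the asymptotics for $g_n''$ are driven by the high-index mass and are therefore most sensitive to the deviations $\tilde a_m$; the cutoff $M(n)$ must be coordinated so that the error term is negligible simultaneously for $g_n$, $g_n'$, and $g_n''$, in contrast to the super-critical setting where one only needs control of $g_n$ itself. A secondary subtlety, most pronounced at the critical exponent $\rho=-1/2$, is verifying that the leading-order cancellation $g_n''/g_n - (g_n'/g_n)^2 \sim (1-u)^{-2}/L(u)$ survives the perturbation; this forces one to track the next-order slowly varying correction in $g_n'/g_n$ rather than relying on a naive leading-order replacement.
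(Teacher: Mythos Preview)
Your proposal is correct, and for the critical case $\rho=-1/2$ it is essentially the same argument as the paper's: both compute the Kac--Rice intensity, obtain Abelian asymptotics $P\sim -\log(1-x^2)$, $Q\sim (1-x^2)^{-2}$, $R\sim x/(1-x^2)$ (your $(uh_n'(u))'$ is exactly $\frac{PQ-R^2}{P^2}$ after the substitution $u=x^2$), and integrate over a core window near $1$ while discarding the endpoints. The paper isolates the core window via Lemma~\ref{lm:main interval}; your three-region split $[0,1-\delta]\cup[1-\delta,1-1/n]\cup[1-1/n,1]$ plays the same role.

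Where you genuinely diverge is in the non-critical cases. The paper does not give a self-contained Kac--Rice proof of Theorem~\ref{thm:gaussian} for $\rho\neq -1/2$: the super-critical case is imported from \cite{DOV}, and the sub-critical upper bound is obtained only as the Gaussian specialization of Theorem~\ref{thm:main:sub}, whose proof goes through the Erd\H{o}s--Offord multiple-root tail bound (Lemma~\ref{lm:multipleroot}) and interlacing with derivatives. Your route for $\rho<-1/2$---a uniform integrable majorant $\sqrt{(uh_n')'}=O\big((1-u)^{-(\rho+3/2)}\big)$ plus dominated convergence to $\int_0^1\rho_\infty>0$---is more direct and keeps the whole theorem inside a single Kac--Rice framework. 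The trade-off is that the paper's detour through Theorem~\ref{thm:main:sub} yields the stronger conclusion $\E N^\ell[-1,1]=O_\ell(1)$ for all moments, which your dominated-convergence argument does not give; and the Erd\H{o}s--Offord machinery developed there is needed anyway for the non-Gaussian sub-critical result, so for the paper there is no extra cost in reusing it for the Gaussian case.
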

Let $E_{\rho} = \E N_{f_{n, \rho}}(0, 1)$ in this case. We visualize this result in Figure \ref{figE}.

\begin{remark} \label{rmk:root}  
 	\begin{figure}[h]
		\centering
		\includegraphics[width=0.8\linewidth]{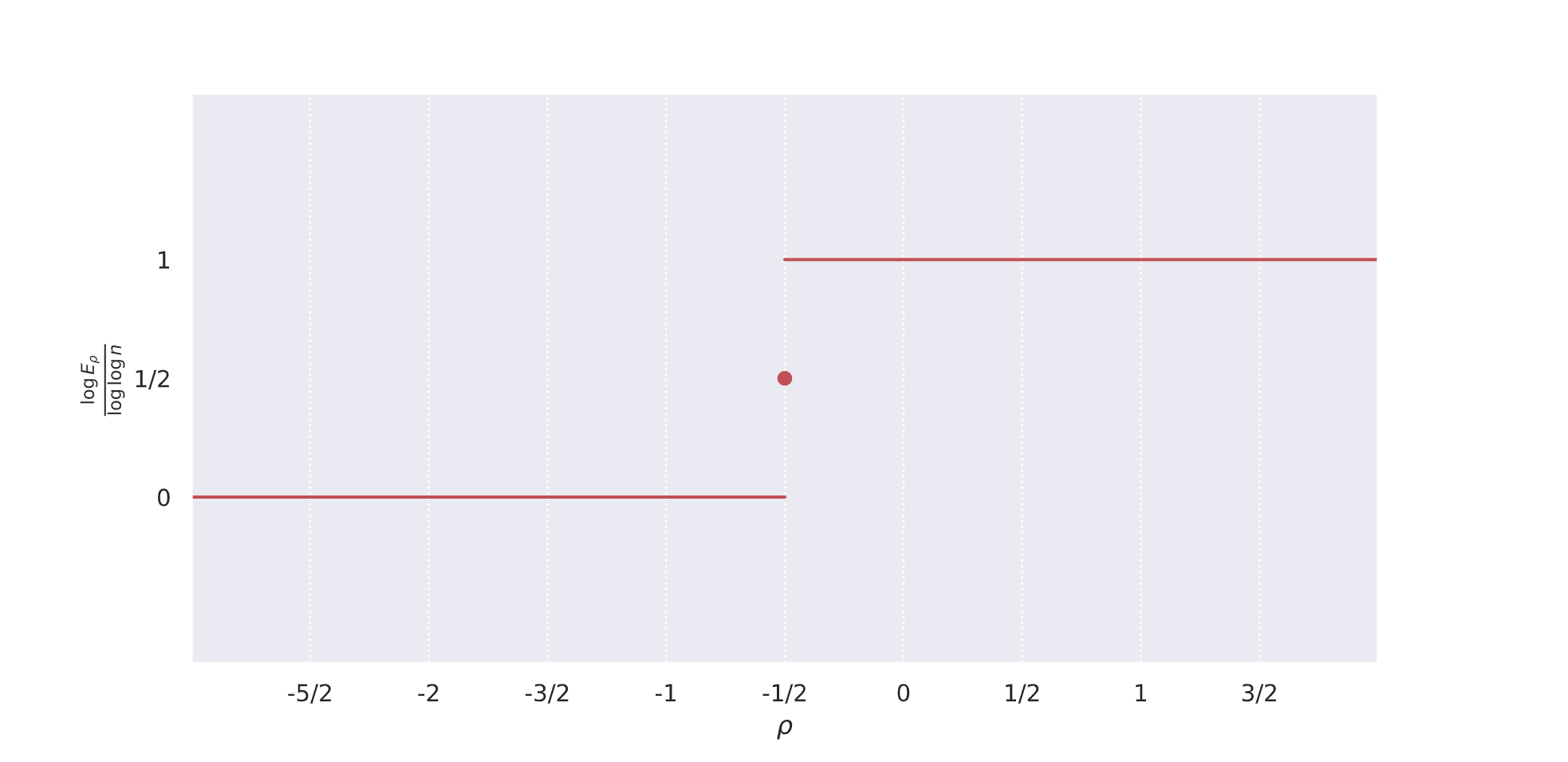}
		\caption{The growth of $E_{\rho}$ as a power of $\log n$.} \label{figE}
	\end{figure}
We find the abrupt change in the growth rate of the expected number of zeros in $(0,1)$ around the critical value $\rho=-1/2$ to be quite surprising (see Figure \ref{figE}). 
	Furthermore, noting that the derivative of $f_{n, \rho}$ is indeed $f_{n, \rho+1}$, this equation implies that for $-3/2\le \rho \le -1/2$, the random polynomial $f_{n, \rho}$ surprisingly has way more critical points (which are the roots of $f_{n, \rho+1}$) than it has zeros in $(0, 1)$.  
\end{remark}

In the subcritical regime $\rho<-1/2$, we show that the mean number of roots in $[-1, 1]$ is $O(1)$, which agrees with the Gaussian case \eqref{eq:gau}. We in fact prove a stronger statement that not only the first moment is $O(1)$ but also all moments.
\begin{thm} [$\rho<-1/2$]\label{thm:main:sub}
Let $\ep_0>0$, $C_{2+\ep_0}>0$, $\rho<-\frac12$ be constants, and $(a_m)$ be a sequence converging to $0$. Let
	$$f_n(x) = \sum_{m=0}^n c_{m, n} \xi_m x^m, $$
	be a random polynomial where $\xi_m$ are independent  random variables satisfying Assumption-A with parameters $\ep_0$ and $C_{2+\ep_0}$. Assume that the deterministic coefficients $(c_{m, n})_{0\le m\le n}$ satisfy Assumption-B with parameters $\rho$ and $(a_m)$. 
Then,   for all $\ell>0$,
	 \begin{equation}\label{eq:sub:01}
	 \E N^{\ell}_{f_n}[-1, 1] = O_{\ell}(1).
	 \end{equation}
	 For the outside intervals, we have
	 \begin{equation}\label{eq:sub:1inf}
	 \E N_{f_n}(1,\infty) = \frac{1+o(1)}{2\pi}\log n,\quad \E N_{f_n}(-\infty, -1) = \frac{1+o(1)}{2\pi}\log n,
	 \end{equation}
	 and
	 \begin{equation}\label{eq:sub:R}
	 \E N_{f_n}(0,\infty) = \frac{1+o(1)}{2\pi}\log n \quad \text{and}\quad \E N_{f_n}(\R) = \frac{1+o(1)}{\pi}\log n.
	 \end{equation}
 \end{thm}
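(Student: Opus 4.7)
The theorem splits into three claims: \eqref{eq:sub:01} bounds all moments of the zero count in $[-1,1]$; \eqref{eq:sub:1inf} gives the $(1+o(1))/(2\pi)\log n$ asymptotic on each of $(1,\infty)$ and $(-\infty,-1)$; and \eqref{eq:sub:R} follows immediately by addition.

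For the outside asymptotic \eqref{eq:sub:1inf}, the plan is to adapt the local universality method of \cite{DOV}. The subcriticality $\rho<-\frac12$ obstructs universality only at $|x|=1$, because there $\Var f_n(1) = \sum c_{m,n}^2$ stays bounded; for $|x|>1$ the variance $\sum c_{m,n}^2 x^{2m} \asymp c_{n,n}^2 x^{2n}/(x^2-1)$ grows exponentially and is dominated by contributions from $m$ near $n$, with many comparably sized terms. A Lindeberg CLT therefore yields joint asymptotic Gaussianity of $(f_n(x), f_n'(x))$, appropriately normalized, on compact subsets of $(1,\infty)$; combined with anticoncentration from the $(2+\e_0)$-moment hypothesis (in Esseen form, as in \cite{DOV}), this gives local universality of the zero process there. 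A Gaussian Kac--Rice computation then shows the limit density is $\frac{1}{\pi(x^2-1)}$ to leading order, which integrated over $(1+\delta_n, M_n)$ for any $\delta_n = \omega(1/n)$ and $M_n\to\infty$ gives $\frac{1+o(1)}{2\pi}\log n$. The missing pieces are handled separately: the far tail $(M_n,\infty)$ is addressed by the reciprocal substitution $y = 1/x$, which maps zeros to a shrinking neighborhood of $0$ for $g_n(y) := y^n f_n(1/y)$, where $g_n(0) = c_{n,n}\xi_n$ is nonzero and of order $n^\rho$ with high probability; the near-endpoint contribution from $(1, 1+\delta_n)$ is bounded using \eqref{eq:sub:01} after an affine change of variable centered at $1$. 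The symmetric interval $(-\infty,-1)$ is treated identically via $x \mapsto -x$.

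For the moment bound \eqref{eq:sub:01}, the plan exploits the uniform boundedness $\sum_{m=0}^\infty c_{m,n}^2 \le C$ implied by $\rho<-\frac12$. Decompose $f_n = g_M + h_{n,M}$, where $g_M(x) = \sum_{m\le M} c_{m,n}\xi_m x^m$ is a polynomial of degree $\le M$ (contributing at most $M$ real zeros), and $h_{n,M} = f_n - g_M$ is the tail, for which $\E\sup_{x\in[-1,1]}|h_{n,M}(x)|^2 \to 0$ as $M \to \infty$ uniformly in $n$ (via Salem--Zygmund-type bounds for random trigonometric polynomials, with input $\sum_{m>M} c_{m,n}^2 \to 0$). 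A sign-change comparison gives
\[
N_{f_n}[-1,1] \;\le\; N_{g_M}[-1,1] \;+\; \#\bigl\{\text{connected components of } \{x\in[-1,1]:|g_M(x)| \le 2|h_{n,M}(x)|\}\bigr\},
\]
and the size of the exceptional set is controlled by an anticoncentration estimate for $g_M$ (and $g_M'$ at the endpoints of the exceptional components), which follows from the $(2+\e_0)$-moment hypothesis via an Esseen-type bound. For the $\ell$-th moment ($\ell\ge 2$), apply the decomposition inside the $\ell$-point factorial moment functional and bound each contribution by Cauchy--Schwarz against moments of $\|h_{n,M}\|_\infty$.

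The main obstacle lies in \eqref{eq:sub:01} near $x = \pm 1$, where the standard Kac--Rice and universality machinery fails precisely because $\rho<-\frac12$ is subcritical and $f_n(\pm 1)$ does not become Gaussian. The approach must instead treat the random tail $h_{n,M}$ as a small uniform perturbation of the low-degree polynomial $g_M$; obtaining \emph{all} moments uniformly in $n$ (not just the first) requires careful control of the topology of the exceptional set together with a simultaneous anticoncentration bound for $g_M$ at multiple points, which I expect to be the most delicate technical step.
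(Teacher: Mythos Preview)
Your plan for \eqref{eq:sub:01} has a real gap. The sign-change comparison you write down does not give what you need: even granting the inequality, the number of components of $E=\{|g_M|\le 2|h_{n,M}|\}$ is \emph{not} controlled by anticoncentration of $g_M$ alone---the boundary of $E$ is the zero set of the degree-$n$ polynomial $g_M^2-4h_{n,M}^2$, so the component count is governed by the oscillation of $h_{n,M}$, not of $g_M$. Concretely, with $g_M(x)=x$ and $h_{n,M}=\epsilon T_n$ (Chebyshev, so $\|h\|_\infty=\epsilon$), the set $E\subset[-2\epsilon,2\epsilon]$ already has $\Theta(n\epsilon)$ components, and your inequality becomes $N_{f_n}\le 1+\Theta(n\epsilon)$, which is useless. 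For the random $h_{n,M}$, bounding the number of components of $E$ amounts to bounding the real zeros of $g_M\pm 2h_{n,M}$ in $[-1,1]$---polynomials of the same form as $f_n$---so the argument is circular. (There is also a secondary issue: to make $\E\sup_{[-1,1]}|h_{n,M}|^2\to 0$ uniformly in $n$ you will need $M=M_n\to\infty$, and then already the trivial bound $N_{g_M}\le M_n$ is not $O(1)$.) The paper's route is completely different and avoids this circularity: the Erd\H{o}s--Offord device says that $N_{f_n}(x,y)\ge k$ forces, via iterated Rolle and integration, $|f_n(y)|\le\frac{(y-x)^k}{k!}\sup_{(x,y)}|f_n^{(k)}|$; one then plays a Markov bound on the right-hand side---where subcriticality yields the crucial extra factor $(1-y)^{a_1}$ with $a_1>0$---against a small-ball estimate $\P(|f_n(y)|\le t)\ll|\log t|^{-r}$ coming from a lacunary-subsequence anticoncentration lemma, and sums the resulting tail $\P(N_{f_n}(x,y)\ge k)\ll(k+\log\tfrac{1}{1-y})^{-r}$ over a dyadic cover of $(1-\tfrac1C,1]$ to get all moments.

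Your near-endpoint step for \eqref{eq:sub:1inf} is also broken: an affine recentering at $1$ does not preserve Assumption-B (the new coefficients are convolutions, not $\sim m^\rho$), so you cannot invoke \eqref{eq:sub:01}, and your truncation split fails for $x>1$ since the tail $h_{n,M}$ is no longer uniformly small there. The paper handles the whole boundary layer $[0,\infty)\setminus(\I_n\cup\I_n^{-1})$ in one stroke by the interlacing observation $N_{f_n}(J)\le k_0+N_{f_n^{(k_0)}}(J)$ with $k_0=\lceil-\rho\rceil$, which pushes $f_n^{(k_0)}$ into the supercritical regime where the bound $\E N^2\ll(\log n)^{4/5}$ from \cite{nguyenvuCLT} is already available; on the core $\I_n^{-1}$ it then applies the reciprocal substitution $g_n(x)=x^nf_n(1/x)/c_n$ followed by universality, essentially as you outline.
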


 Finally, we conjecture that what happens for the Gaussian case \eqref{eq:gau} for $\rho=-1/2$ also holds for general distributions.
 \begin{conj}\label{conj:critical}
 	 Under the assumption of Theorem \ref{thm:main:critical}, we have
 	 \begin{equation}\label{eq:01:uni:conj}
 	 \E N_{f_n}(-1, 1) = \frac{1+o(1)}{\pi} \sqrt{\log n} .\nonumber
 	 \end{equation}
 \end{conj}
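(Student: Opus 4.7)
The plan is to transfer the Gaussian asymptotic of Theorem~\ref{thm:gaussian} to general coefficient distributions by means of a quantitatively sharpened local universality argument. Theorem~\ref{thm:main:critical} already gives $\E N_{f_n}[-1,1] = o(\log n)$, so what remains is to upgrade this to the precise $\sqrt{\log n}$ rate, which demands a universality estimate with error $o(\sqrt{\log n})$. The Gaussian side comes from a direct Kac--Rice computation: with $K_n(x,y)=\sum c_{m,n}^2 (xy)^m \approx -\log(1-xy)$, the intensity of real zeros of the Gaussian analog $\hat f_n$ behaves like $\frac{1}{2\pi(1-|x|)\sqrt{-\log(1-|x|)}}$ as $|x|\to 1$, and integration recovers the $\sqrt{\log n}$ rate of Theorem~\ref{thm:gaussian}.

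The core of the argument is a per-scale comparison to $\hat f_n$. Partition a neighborhood of $+1$ dyadically as $I_k = [1-2^{-k+1}, 1-2^{-k})$, and do the same near $-1$, for $k\in[K_1,\log_2 n]$. Each $I_k$ carries $\E N_{\hat f_n}(I_k) = \Theta(k^{-1/2})$ Gaussian roots, and we target the uniform comparison
\begin{equation*}
\E N_{f_n}(I_k) = \E N_{\hat f_n}(I_k) + o(k^{-1/2}),
\end{equation*}
which sums to $o(\sqrt{\log n})$ across $k$. Regions bounded away from $\pm 1$ contribute only $O(1)$ and are handled by the super-critical methods of~\cite{DOV}, while the innermost region within $1/n$ of $\pm 1$ contributes $O(1)$ by a Jensen-type bound. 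For the per-scale comparison I would cover each $I_k$ by windows of width $\ell_k \ll 2^{-k}$ on which $f_n$ is well-approximated by its linearization, and apply a Tao--Vu / Do--Nguyen--Vu style Lindeberg swap~\cite{TVpoly, DOV} to the smoothed count of sign changes in each window, exchanging the $\xi_m$ against Gaussians one at a time.

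Two analytic inputs are required. First, a quantitative multivariate CLT for the pair $(f_n(x), f_n'(x))$ with explicit error, which follows from Lindeberg-type estimates using the $(2+\varepsilon_0)$-moment hypothesis together with the fact that $\Var(f_n(x)) \asymp k$ on $I_k$. Second, and more delicately, an anti-concentration estimate $\sup_u \P(|f_n(x)-u|<\e) \ll \e/\sqrt{k}$, uniformly in $x \in I_k$, matching the Gaussian small-ball probability at zero on this scale. The main obstacle is this anti-concentration estimate: standard Kolmogorov--Rogozin bounds are too crude because $\Var(f_n(x))$ grows only logarithmically in $n$, so one essentially needs an Edgeworth-type expansion for a weighted sum of $\asymp 2^k$ non-identically-distributed summands in order to recover the correct $1/\sqrt k$ factor. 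Once this is in hand, the Lindeberg exchange can be calibrated to absorb the distributional error into $o(k^{-1/2})$ per scale, and summation over $k$ completes the proof.
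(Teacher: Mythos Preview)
The statement you are attempting to prove is labeled in the paper as Conjecture~\ref{conj:critical}; the paper does \emph{not} supply a proof. The authors prove the $\sqrt{\log n}$ asymptotic only in the Gaussian case (Theorem~\ref{thm:gaussian}) and explicitly leave the general-coefficient case open, obtaining only the weaker $o(\log n)$ bound of Theorem~\ref{thm:main:critical}. So there is no paper proof to compare against.

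As for your proposal itself, it is a reasonable strategic outline but it is not a proof, and you yourself flag the gap. The crux is the per-scale error $o(k^{-1/2})$ in the comparison $\E N_{f_n}(I_k)=\E N_{\hat f_n}(I_k)+o(k^{-1/2})$. The universality machinery actually established in the paper (Lemma~\ref{lm:reduction:01} via Theorem~\ref{thm:universality}) yields an error of size $\delta^c$ with $\delta=(\log L)^{-1}\asymp k^{-1}$ on $I_k$, i.e.\ an error $\asymp k^{-c}$ for some small, unspecified $c>0$. Summing over $k\le \log n$ gives $(\log n)^{1-c}$, which is exactly how \eqref{eq:uni:In:f} produces the $o(\log n)$ of Theorem~\ref{thm:main:critical}; to reach $o(\sqrt{\log n})$ you would need $c>1/2$, which the existing black boxes do not give. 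Your proposed remedy---an Edgeworth-type expansion yielding anti-concentration $\sup_u\P(|f_n(x)-u|<\varepsilon)\ll \varepsilon/\sqrt{k}$ uniformly on $I_k$---is precisely the missing ingredient, and you have not supplied it. Under only a $(2+\varepsilon_0)$-moment assumption and with non-identically-distributed summands whose effective number is $\asymp 2^k$ but whose variance is only $\asymp k$, such an expansion is genuinely delicate; it is not covered by the references you cite, and getting the constant in front of $\varepsilon/\sqrt{k}$ uniform in the location within $I_k$ is an additional issue. Until that step is carried out, the argument remains a program rather than a proof, which is consistent with the authors' decision to state the result as a conjecture.
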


\subsection{Relevant prior work on limit cycles}\label{sec:prior}

The study of limit cycles in random polynomial systems was initiated by A. Brudnyi in \cite{Brudnyi1}, \cite{Brudnyi2} where the author considers the randomly perturbed center with the degree $d$ of the perturbation allowed to increase as the size $\e$ of the perturbation shrinks.  Sampling the vector of coefficients of $p,q$ uniformly from the unit ball and taking $\e = \e(d) = O(d^{-1/2}) $ as $d \rightarrow \infty$, it is shown that the expected number of limit cycles residing in the disk of radius $1/2$ is $O(\log d)$.
In \cite{LC}, the first author established a probabilistic limit law in a related setting, showing that when $p,q$ have iid coefficients and $\e=\e(d) = o(1)$ as $d \rightarrow \infty$, the number of limit cycles in a disk of radius $\rho<1$ converges almost surely to the number of zeros in the interval $(0,\rho)$ of a certain univariate random power series.

Concerning the problem at hand, the study of \emph{birfucating} limit cycles (i.e., letting $\e \rightarrow 0$ before taking $d \rightarrow \infty$) for random polynomial systems was initiated only recently by the first author \cite{LC}. When $p,q$ are sampled from the so-called Kostlan ensemble, it was shown in \cite{LC} that the average number of bifurcating limit cycles within the disk of radius $\rho$ is $\frac{\arctan{\rho}}{\pi} d(1+o(1))$ as $d \rightarrow \infty$.  It was also shown that when $p,q$ have independent coefficients with variance having power-law growth rate $m^\gamma$ with $\gamma>0$ and $m$ the degree of the associated monomial, the average number of bifurcating limit cycles is $ \left( \frac{1+\sqrt{\gamma}}{2\pi} \log d \right) (1+o(1))$ as $d \rightarrow \infty$.  The assumption $\gamma>0$ was important for the method of proof used in \cite{LC} which relied on the aforementioned results from \cite{DOV}.  This is why $\gamma=0$, which includes the important case when $p,q$ have iid coefficients, was left as an open problem (now addressed by our results stated above).
 
 \subsection{Proof ideas}
As mentioned above, when $\rho=-1/2$, the variance of $f_n(1)$ is of order $\log n$ which still converges to $\infty$ though with a slow rate. Therefore, it is still reasonable to expect that $f_n$ continues to have the universality property around 1. So, our goal is to establish that in order to show Theorem \ref{thm:main:critical}. One major difficulty for the analysis is to control the supremum of $f_n$ over small balls $B(x_0, r)$ inside (but near {the boundary of}) the unit disk, and bound it by, roughly speaking, some power of $\sqrt{\Var f_n(x_0)}\approx \log \frac{1}{1-x_0}$. One of the common techniques (used in \cite{DOV, nguyenvurandomfunction17}, for example) is based on the simple bound
   $$\sup_{x\in B(x_0, r)} |f_n(x)|\le \sum_{m=0}^{n} |c_{m, n}||\xi_m|(x_0+r)^{m}\approx \sum_{m=0}^{n} m^{-1/2}(x_0+r)^{m}\approx \frac{1}{\sqrt{1-x_0}}.$$ 
   However, the RHS is too large compared to any power of $\sqrt{\Var f_n}$ for it to be useful. This issue stems again from the fact that $\Var f_n(x)$ {grows very slowly}. Another common approach is to use an $\ep$-net argument, and this often involves taking the derivative of $f_n$. As we have discussed in Remark \ref{rmk:root}, for $\rho=-1/2$, $f_n'$ corresponds to $f_{n, \rho=1/2}$ which is already in the supercritical regime that has many more real roots than $f_n$ and is typically  much larger than $f_n$. This limits the usefulness of estimates that are based on passing to the derivative.
   In this paper, we come up with a simple solution that more or less combines both approaches together with a key estimate using Taylor expansion to an appropriately chosen degree $m$ (see Section \ref{sec:cond:bddn:0}).
 
 For the roots in $(1, \infty)$, the behavior of $f_n(x)$ where $x$ is very near 1, say $x\in (1, 1+\log n/n)$, may be quite erratic. To overcome this issue, we show that the contribution from this interval very close to 1 is negligible. And then outside of this interval, we show that $f_n$ behaves just like those in the supercritical regime. We in fact show that the same phenomenon holds even for $\rho<-1/2 $ (see Lemma \ref{lm:allrho:1}).

  For roots in $(0, 1)$ and for $\rho<-1/2$, we adapt a classical approach to show that with very high probability, $f_n$ does not have many roots in a small interval.  
 \subsection{Notations.} Throughout the paper, $N_{f}(S)$ denotes the number of roots of a function $f$ in a set $S$, counted with multiplicities. 
 We use standard asymptotic notations under the assumption that $n$ tends to infinity. For two positive  sequences $(a_n)$ and $(b_n)$, we say that $a_n \gg b_n$ or $b_n \ll a_n$ if there exists a constant $C$ such that $b_n\le C a_n$. Equivalently, we also write $b_n=O(a_n)$ and $a_n=\Omega(b_n)$. If $a_n\ll b_n$ and $b_n\ll a_n$, we write $a_n = \Theta(b_n)$.
 If $|c_n|\ll a_n$, we also write $c_n\ll a_n$.  
 
 % If $a_n\ll b_n\ll a_n$, we say that $b_n=\Theta(a_n)$.  If $\lim_{n\to \infty} \frac{a_n}{b_n} = 0$, we say that $a_n = o(b_n)$.  If $b_n\ll a_n$, we sometimes employ the notations $b_n = O(a_n)$ and $a_n = \Omega(b_n)$ to make the idea intuitively clearer or the writing less cumbersome; for example, if $A$ is the quantity of interest, we may write $A = A'+ O(B)$ instead of $A - A' \ll B$, and $A = e^{O(B)}$ instead of $\log A\ll B$.
 
 The rest of the paper is organized as follows. In Section \ref{sec:pf:cor}, we prove Corollaries \ref{cor:limit:cycle} and \ref{cor:Lienard} after reviewing how bifurcating limit cycles of \eqref{eq:pertcenter} relate to the study of zeros of a univariate polynomial. In Section \ref{sec:pf:main}, we prove Theorem \ref{thm:main:critical} (the critical case $\rho = -1/2$). Theorem~\ref{thm:gaussian} (the Gaussian case) is  proved in
  Section \ref{sec:gau}. We prove Theorem \ref{thm:main:sub} (the sub-critical case $\rho < -1/2$) in Section \ref{sec:main:sub}.
 
\section{Proofs of the Corollaries} \label{sec:pf:cor}

\subsection{Bifurcating limit cycles}\label{sec:LC}

As indicated in the introduction, the possibility to apply Theorem \ref{thm:main:critical} to the problem of counting limit cycle bifurcations of the perturbed center \eqref{eq:pertcenter} rests on the (deterministic) fact that the latter problem reduces, in the generic case, to counting positive real zeros of an associated univariate polynomial. This reduction can be seen from perturbation theory of the Poincar\'e first return map as follows. First note that the trajectory of \eqref{eq:pertcenter} from some initial condition on the positive $x$-axis will, for $\e>0$ suffiiciently small, wind around the origin in the counterclockwise direction returning to the positive $x$-axis.  Let $\sP$ denote the Poincar\'e map associated to \eqref{eq:pertcenter}, sending an initial condition on the positive $x$-axis to the position of its first return to the positive $x$-axis.
That is, we have $r \rightarrow \sP(r)$ if the trajectory with initial condition $(r,0)$ first returns to the positive $x$-axis at position $(\sP(r),0)$.

The Poincar\'e map $\sP$ admits a perturbation expansion in powers of $\e$
\begin{equation}\label{eq:pertexp}
\sP(r) = r + \e \sM(r) + O(\e^2),
\end{equation}
where the function $\sM(r)$ appearing in the first-order correction is the Poincar\'e-Pontryagin-Melnikov integral
\begin{equation}\label{eq:Melnikov}
\sM(r) = \int_{C_r} p dy - q dx , \quad C_r :=\{ x^2+y^2 = r^2 \}.
\end{equation}
For a rigorous derivation of the perturbation expansion \eqref{eq:pertcenter} within a more general context, we refer the reader to \cite[Sec.  26]{IlyashenkoBook}, but let us briefly provide some intuition here.
The zeroth order term $r$ in \eqref{eq:pertexp} simply follows from setting $\e=0$ in the system \eqref{eq:pertcenter} which results in a family of circular trajectories centered at the origin, and this clearly corresponds to an identity map $r \rightarrow r$ for the Poincar\'e first return map. The Melnikov integral $\sM(r)$ appearing in the first-order correction is nothing other than the flux integral $\int_{C_r} \binom{p}{q} \cdot \hat{n} \, ds$ of the vector field $\binom{p}{q}$ across the zeroth-order circular trajectory of radius $r$ (here $\hat{n}$ denotes the outward-pointing unit normal vector and $ds$ denotes the arclength element).  For $\e>0$ small, the flux $\e \sM(r)$ accounts for the net displacement, to first order in $\e$, caused by the perturbative component $\e \binom{p}{q}$ directing the true trajectory away from the zeroth-order approximation.  Intuitively, this expresses net displacement in the normal direction as a superposition of disturbances which might seem surprising in the context of a nonlinear system, but when $\e$ is small the trajectory remains uniformly close to the zeroth order circular trajectory (at least up until the time of its first return to the positive $x$-axis) so that the nonlinearity may be treated (to first order in $\e$) as one would treat a nonhomogeneity---by taking a superposition of disturbances.

\begin{figure}[h]
\centering
\includegraphics[width=0.4\linewidth]{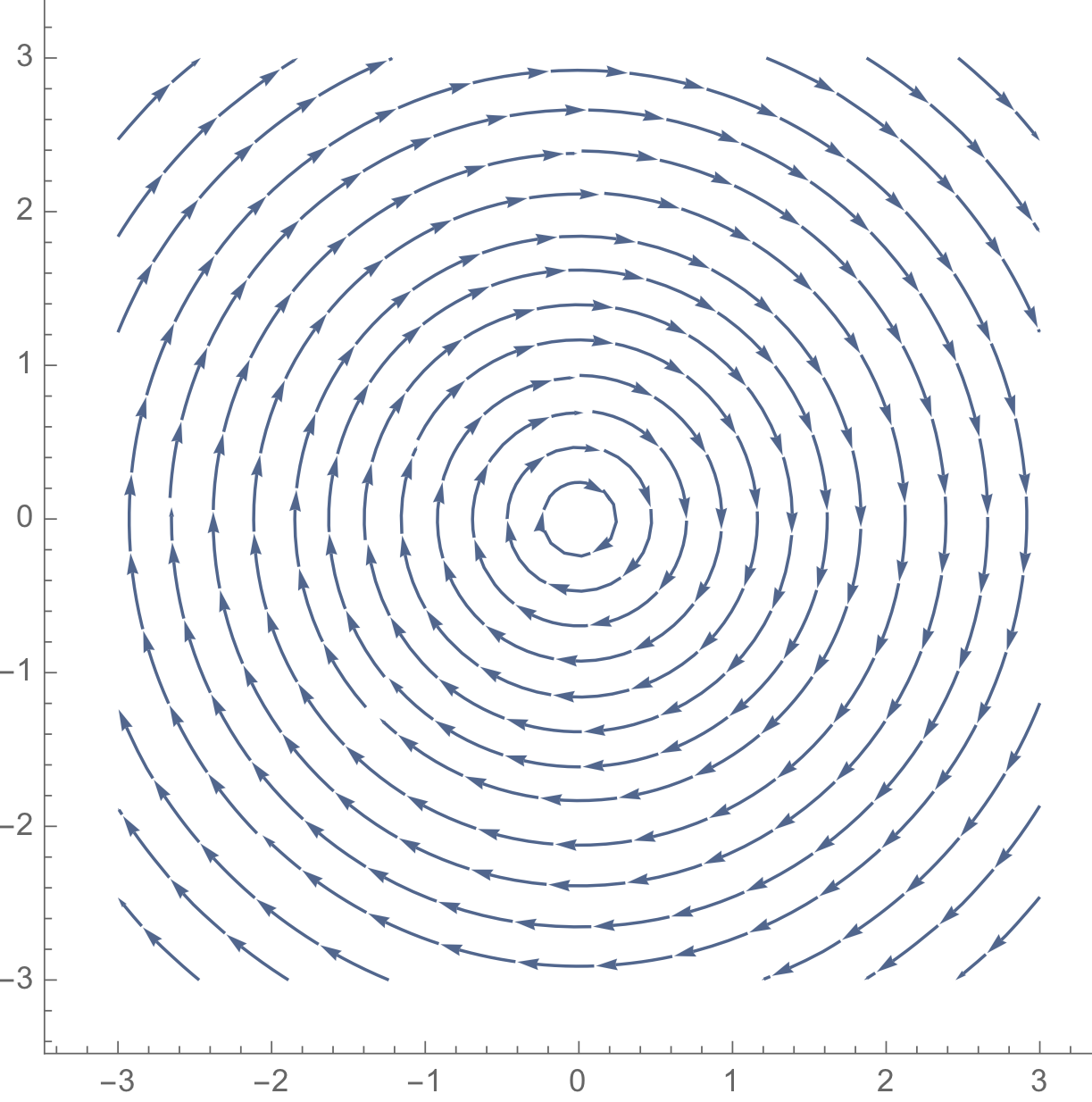} \hspace{0.3in}
\includegraphics[width=0.4\linewidth]{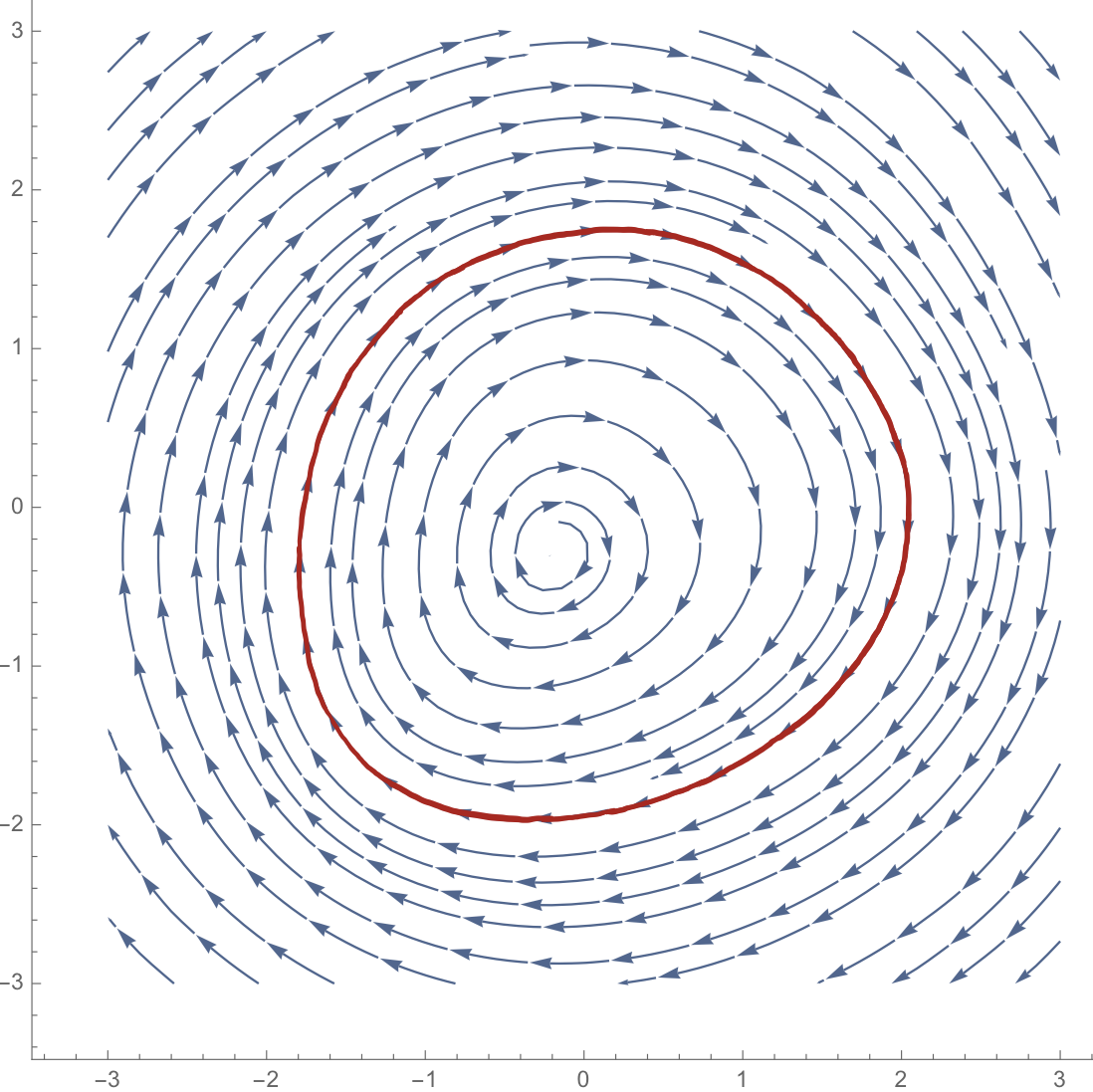}
\caption{A numerical plot (using Mathematica's StreamPlot function) showing some trajectories for the linear center (left) and for the linear center perturbed by randomly sampled polynomials of degree $d=10$ with a limit cycle shown in bold (right).}
\label{fig:perturb}
\end{figure}

Notice from the definition of the Poincar\'e map that a trajectory of the ODE system \eqref{eq:pertcenter} passing through an initial point $(r_0,0)$ is a closed trajectory if and only if $r_0$ is a fixed point of the Poincar\'e map, i.e., we have $\sP(r_0) = r_0$ or equivalently $r_0$ is a zero of $\sP(r)-r$.  Moreover, the limit cycles of the system correspond to the isolated fixed points of the Poincar\'e map $\sP$.  From \eqref{eq:pertexp} we have 
$$
\frac{\sP(r) - r}{\e} = \sM(r) + O(\e).
$$
If $\sM$ has nondegenerate zeros, and if the error term $O(\e)$ is not just uniformly small but also obeys some $C^1$-smallness as a function of $r$, then the transversality and stability principle will guarantee that the fixed points of $\sP$ are all isolated and are in one-to-one correspondence with the positive real zeros of $\sM$.
In fact, as stated in the following result this conclusion holds merely assuming that $\sM$ does not vanish identically (this is stronger than we need as the zeros will be nondegenerate with probability one).

\begin{thm}\label{thm:LC}
Suppose $\sM$ defined in \eqref{eq:Melnikov}
does not vanish identically.
Then the number of bifurcating limit cycles of the system \eqref{eq:pertcenter} is equal to the number of positive real zeros of $\sM$.
\end{thm}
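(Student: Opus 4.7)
My plan is to reduce the theorem to an analytic fact about zeros of the displacement function $D_\e(r) := \sP_\e(r) - r$, whose isolated zeros are in bijection with the limit cycles of \eqref{eq:pertcenter} crossing the positive $x$-axis at $(r,0)$. Since the right-hand sides of \eqref{eq:pertcenter} are polynomial in $(x,y)$ and analytic in $\e$, standard analytic dependence theory for flows (\cite[Sec.~26]{IlyashenkoBook}) implies that $\sP_\e(r)$ is jointly real-analytic in $(r,\e)$ on any compact subinterval of $(0,\infty)$ for $\e$ in a neighborhood of $0$. In view of \eqref{eq:pertexp}, the function
$$F(r,\e) \; := \; \frac{\sP_\e(r) - r}{\e}$$
then extends analytically across $\e=0$ with $F(r,0) = \sM(r)$; in particular $F(\cdot,\e) \to \sM$ in $C^1_{\mathrm{loc}}(0,\infty)$ as $\e \to 0$.

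The existence half of the theorem is then immediate: at each simple positive zero $r_0$ of $\sM$ one has $\partial_r F(r_0,0) = \sM'(r_0) \neq 0$, so the analytic implicit function theorem yields a unique real-analytic curve $\e \mapsto r_0(\e)$ with $F(r_0(\e),\e)=0$ for all sufficiently small $\e>0$, giving exactly one bifurcating limit cycle per simple positive zero. At a positive zero of $\sM$ of multiplicity $\ell > 1$, the Weierstrass preparation theorem applied to $F$ at $(r_0,0)$ bounds the number of nearby bifurcating real fixed points of $\sP_\e$ by $\ell$, with equality holding generically. Under the probabilistic hypotheses of Corollary \ref{cor:limit:cycle}, the polynomial $\sM$ has almost surely only simple positive zeros, so this is the only case relevant to our applications.

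For the matching upper bound, I would enclose the (finitely many) positive zeros $r_1,\ldots,r_k$ of $\sM$ in disjoint open intervals $U_j$ and fix a bounded interval $[a,b]\subset(0,\infty)$ containing them. On the compact set $K = [a,b]\setminus\bigcup_j U_j$ the continuous function $|\sM|$ is bounded below by some $c > 0$, and uniform convergence $F(\cdot,\e)\to \sM$ on $K$ forces $|F(r,\e)| \geq c/2$ for $\e$ small; hence $D_\e$ has no zeros in $K$. An analogous estimate on the complement of $[a,b]$, together with the fact that any bifurcating limit cycle persists for \emph{all} small $\e$ and hence must lie in a fixed compact subinterval of $(0,\infty)$ on which $\sP_\e$ is defined, rules out bifurcating cycles outside the $U_j$. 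The main technical obstacle is precisely the joint analyticity of $\sP_\e(r)$ in $(r,\e)$ and the attendant $C^1_{\mathrm{loc}}$ convergence $F(\cdot,\e)\to \sM$: this analytic dependence, which is standard but nontrivial for polynomial perturbations of the linear center, is what powers both the implicit function theorem argument (for existence) and the uniform lower bound on $|F|$ away from zeros of $\sM$ (for the absence of spurious bifurcations).
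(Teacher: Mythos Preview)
The paper does not actually prove this theorem. Immediately after stating it, the authors write: ``For a proof of this result we direct the reader to a more general result that holds for perturbed Hamiltonian systems \cite[Sec.~2.1 of Part II]{ChLi} and \cite[Sec.~26]{IlyashenkoBook}.'' So there is no in-paper argument to compare against---only a citation to the standard references---and the discussion preceding the theorem is explicitly heuristic.

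Your sketch is essentially the standard argument one finds in those references, at least for simple zeros of $\sM$: joint analyticity of the return map in $(r,\e)$, the analytic extension of $F(r,\e)=(\sP_\e(r)-r)/\e$ across $\e=0$ with $F(\cdot,0)=\sM$, the implicit function theorem at each transversal zero for existence, and uniform lower bounds on $|F|$ away from the zero set for the absence of spurious cycles. That part is correct and, as you yourself note, is all that is needed for Corollaries~\ref{cor:limit:cycle} and~\ref{cor:Lienard}, since the random $\sM$ has simple zeros almost surely.

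Where your sketch falls short of the theorem \emph{as stated} is the degenerate case. The statement claims equality assuming only $\sM\not\equiv 0$, hence also at zeros of multiplicity $\ell\ge 2$. Weierstrass preparation gives only an \emph{upper bound} of $\ell$ nearby real zeros of $F(\cdot,\e)$; your parenthetical ``equality holding generically'' is not an argument, and in fact at a zero of even multiplicity one may have $0$ real zeros of $F(\cdot,\e)$ for all small $\e$ of one sign. A complete proof in this generality requires either a clarification that zeros are counted with multiplicity on both sides together with an argument that the total real multiplicity is preserved, or a more careful appeal to the Poincar\'e--Pontryagin theory in the cited references. Your handling of the non-compact part (``an analogous estimate on the complement of $[a,b]$'') is also only gestured at; one needs to argue that no bifurcating cycle can have radius tending to $0$ or $\infty$ along a sequence $\e_n\to 0$, which uses that $\sM$ is a polynomial in $r$ (so has no zeros near $0$ or $\infty$) together with the locally uniform convergence you already invoked.
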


For a proof of this result we direct the reader to a more general result that holds for perturbed Hamiltonian systems \cite[Sec.  2.1 of Part II]{ChLi} and \cite[Sec.  26]{IlyashenkoBook}.
In that setting, the function $\sM$ is generally an Abelian integral, but for the system \eqref{eq:pertcenter} it is simply a polynomial which can be seen from integrating \eqref{eq:Melnikov} in polar coordinates.
\begin{align*}
    \sM(r) &= \int_{x^2+y^2=r^2} p dy - q dx \\
    &= \int_{0}^{2\pi} p(r \cos(\theta),r \sin(\theta)) r\cos(\theta) d\theta +  q(r \cos(\theta),r \sin(\theta)) r\sin(\theta) d\theta \\
    &= \sqrt{8\pi}\sum_{m=0}^{\lfloor (d-1)/2 \rfloor} c_{m, n}\xi_m r^{2m+2},
\end{align*}
where
\begin{eqnarray} 
\sqrt{8\pi} c_{m, n} \xi_m &=&  \sum_{j+k=2m+1} \alpha_{j,k}\int_{0}^{2\pi}    (\cos(\theta))^{j+1}(\sin(\theta))^k   d\theta + \beta_{j,k}\int_{0}^{2\pi}    (\cos(\theta))^j(\sin(\theta))^{k+1}  d\theta.\nonumber\\
&=:& \sum_{j+k=2m+1} \alpha_{j,k} a_{k, m} + \beta_{j,k} a_{k+1, m}\label{eq:lincomb}.
\end{eqnarray}
The deterministic coefficients $a_{k, m}$ can be computed using the following identity \cite{Grad} (see also \cite{LC})
\begin{eqnarray}
a_{k, m}:=\int_{0}^{2\pi}    (\cos(\theta))^{2m+2-k}(\sin(\theta))^k   d\theta &=&
\begin{cases} 
2\pi \frac{(2m-k+1)!!(k-1)!!}{(2m+2)!!}, \quad \text{ for $k$ even} \\
0,  \quad \text{ for $k$ odd.}
\end{cases}\label{eq:a}
\end{eqnarray}

Hence, $\sM(r) = \sqrt{8\pi} r^2 f_n(r^2)$, with 
\begin{equation}
f_n(x)=\sum_{m=0}^n c_{m, n}\xi_m x^m\nonumber
\end{equation}
a polynomial of degree $n=\lfloor (d-1)/2 \rfloor $.
For $0\leq a < b$, the number of zeros of $\sM$ over the interval $(a,b)$ equals the number of zeros of $f_n$ over the interval $(a^2,b^2)$.
Together with Theorem \ref{thm:LC} and the Fundamental Theorem of Algebra, this implies that the number of bifurcating limit cycles of the system \eqref{eq:pertcenter} is at most $\lfloor (d-1)/2 \rfloor $ as stated in the introduction.

\subsection{
Polynomial perturbations with random coefficients}\label{sec:sigma}

Recall that the coefficients $\alpha_{j,k},\beta_{j,k}$ of the polynomials $p,q$ in the perturbed center \eqref{eq:pertcenter} are independent random variables. As stated in \eqref{eq:lincomb}, the resulting random coefficients $\xi_m$ of $f_n$ are linear combinations of $\alpha_{j,k}, \beta_{j,k}$.  Moreover, we notice that the linear combinations in \eqref{eq:lincomb}
involve disjoint subsets of indices for distinct values of $m$. This implies that $\xi_m$ are independent random variables with mean zero.

The formula \eqref{eq:a} allows for the explicit computation  \eqref{eq:sigma_exact} of the variances $c_{m, n}^2$  
\begin{equation}
c_{m, n}^2 = \frac{\pi}{2} \sum_{\ell=0}^m
\left(\frac{(2m-2\ell+1)!!(2\ell-1)!!}{(2m+2)!!}\right)^2 + 
 \left(\frac{(2m-2\ell-1)!!(2\ell+1)!!}{(2m+2)!!}\right)^2.\nonumber
\end{equation}
 
Let us verify the asymptotic $c_{m, n}^2 = m^{-1} + O(m^{-2})$ that was stated in \eqref{eq:sigma}.

For each $m$, the largest value (for $\ell$ ranging over $\ell=0,1,\dots,m$) of 
$$\left(\frac{(2m-2\ell+1)!!(2\ell-1)!!}{(2m+2)!!}\right)^2$$
is at $\ell=0$, and the next largest value is 
$$  \left(\frac{(2m-1)!!}{(2m+2)!!}\right)^2 $$
which occurs at $\ell=1$ and $\ell=m$.
The largest value of $$  \left(\frac{(2m-2\ell-1)!!(2\ell+1)!!}{(2m+2)!!}\right)^2 $$ is at $\ell=m$ and the next largest value 
is 
$$  \left(\frac{(2m-1)!!}{(2m+2)!!}\right)^2 $$
which occurs at $\ell = 0$ and $\ell=m-1$.
Hence, we can write
\begin{equation}\label{eq:sigmashort}
c_{m, n}^2 = \pi \left( \frac{(2m+1)!!}{(2m+2)!!} \right)^2 + \frac{\pi}{2}E_m,
\end{equation}
where $E_m$ is a sum of $2m$ non-negative numbers
each bounded by $\left(\frac{(2m-1)!!}{(2m+2)!!}\right)^2 $,
i.e., $E_m$ satisfies
$$ E_m \leq 2m \left(\frac{(2m-1)!!}{(2m+2)!!}\right)^2 .$$

By \cite[Remark 4.1]{LaNa},
$$ \frac{(2n-1)!!}{(2n-2)!!} = \frac{2}{\sqrt{\pi}} n^{1/2} (1+O(n^{-1}))$$
which implies
\begin{equation}
\pi \left( \frac{(2m+1)!!}{(2m+2)!!}\right)^2 = m^{-1} (1+O(m^{-1})),\nonumber
\end{equation}
and
\begin{equation}
\pi \left( \frac{(2m-1)!!}{(2m+2)!!}\right)^2 = \frac{1}{4}m^{-3} (1+O(m^{-1})).\nonumber
\end{equation}
Applying this in \eqref{eq:sigmashort} gives
$$c_{m, n}^2 = m^{-1} + O(m^{-2}), $$
as desired.

\subsection{Proofs of the Corollaries}

\begin{proof}[Proof of Corollary \ref{cor:limit:cycle}]
As in the statement of the corollary, suppose $p,q$ have independent coefficients,
and as above let $f_n$ be the polynomial such that the Melnikov integral can be expressed as $\sM(r) = r^2f(r^2)$.  As proven in Section \ref{sec:sigma}, the resulting random coefficients $c_{m, n}\xi_m$ are independent with variance $c_{m, n}^2$ given by \eqref{eq:sigma_exact}.
In light of the asymptotic \eqref{eq:sigma} for the variance $c_{m, n}^2$ of the coefficients, the result then follows from Theorem \ref{thm:main:critical} and Theorem \ref{thm:LC} if we can show that the $\xi_m$ have uniformly bounded $(2+\ep_0)$-moments. By Minkowski's inequality, we have from \eqref{eq:lincomb} that
\begin{eqnarray*}
\left (\E |c_{m, n} \xi_m|^{2+\ep_0}\right )^{1/(2+\ep_0)} &\le& \sum_{j+k = 2m+1} a_{k, m} \left (\E |\alpha_{j, k}|^{2+\ep_0}\right )^{1/(2+\ep_0)} + a_{k+1, m}\left (\E |\beta_{j, k}|^{2+\ep_0}\right )^{1/(2+\ep_0)} \\
&\ll&   \sum_{k=0}^{2m+2} a_{k, m} = \sum_{\ell=0}^{m+1} a_{2l, m} \ll \sum_{\ell=0}^{m+1}  \frac{(2m-2\ell+1)!!(2\ell-1)!!}{(2m+2)!!} \\
&\ll&    \frac{(2m+1)!!}{(2m+2)!!}   + m  \frac{(2m-1)!!}{(2m+2)!!}  \ll  m^{-1/2}
\end{eqnarray*}
where on the second line, we used \eqref{eq:a}, and on the third line, we used the same argument that led to \eqref{eq:sigmashort}. Using $|c_{m, n}| = \Theta(m^{-1/2})$, we conclude that the $\xi_m$ have uniform bounded $(2+\ep_0)$-moments.
\end{proof}

\begin{proof}[Proof of Corollary \ref{cor:Lienard}]
In order to simplify the final step, we include a constant factor in the function $p(x)$ as follows
\begin{equation}\label{eq:gscale}
p(x) = \frac{1}{2\sqrt{\pi}} \sum_{k=1}^d \alpha_k x^k.
\end{equation}
The Poincar\'e-Pontryagin-Melnikov integral associated to the system  \eqref{eq:Lienard} is
\begin{align*}
    \sM(r) &= \int_{x^2+y^2=r^2} p(x) dy = \int_{0}^{2\pi} p(r \cos(\theta)) r\cos(\theta) d\theta  = \sum_{m=0}^{(d-1)/2} c_{m, n} \xi_m r^{2m+2},
\end{align*}
where
\begin{align*}
    c_{m, n} \xi_m &=  \frac{\alpha_{m}}{2\sqrt{\pi}}\int_{0}^{2\pi}    (\cos(\theta))^{2m+2}  d\theta  =  \alpha_{m} \sqrt{\pi}  
\frac{(2m+1)!!}{(2m+2)!!}.
\end{align*}
The variance of the left-hand side satisfies
\begin{equation}
c_{m, n}^2 = \pi
\left(\frac{(2m+1)!!}{(2m+2)!!}\right)^2= m^{-1} + O(m^{-2}).\nonumber
\end{equation}
Similarly to the proof of Corollary \ref{cor:limit:cycle}, the $\xi_m$ have uniformly bounded $(2+\ep_0)$-moments. Therefore, the stated result follows from Theorem \ref{thm:main:critical}.
\end{proof}

\section{Proof of Theorem \ref{thm:main:critical}} \label{sec:pf:main}
  Throughout this section, $N_n(S)=N_{f_n}(S)$ denotes the number of roots of $f_n$ in a set $S$.  
\subsection{Splitting into pieces}
Since the negation of any root of $f_n$ in $(-\infty, 0)$ is a root of the polynomial
$$g_n(x) = f_n(-x) = \sum_{m=0}^n c_{m, n} ((-1)^{m}\xi_m) x^m$$
and since the random variables $((-1)^{m}\xi_m)_m$ also satisfy the conditions of Theorem \ref{thm:main:critical}, any bound for the roots of $g_n$ in $(0, \infty)$ gives the same bound for $f_n$ in $(-\infty, 0)$. Therefore, it suffices to study the nonnegative roots only. In particular, it suffices to show the following lemmas. 
\begin{lemma} [Roots in $(0, 1)$] \label{lm:-1/2:01}
Under the hypothesis of Theorem \ref{thm:main:critical}, we have
	 \begin{equation}\label{eq:01:uni:2}
	 \E N_{n}[0, 1] = o(\log n).
	 \end{equation}
\end{lemma}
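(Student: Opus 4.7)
The plan is a Jensen-based dyadic argument, with the per-interval supremum controlled by the Taylor-truncation strategy alluded to in the introduction.

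Decompose $[0,1] = [0,1/2] \cup \bigcup_{k=1}^{K} I_k \cup [1-1/n,1]$ with $I_k := [1-2^{-k},\,1-2^{-k-1}]$ and $K := \lceil\log_2 n\rceil$. On $[0,1/2]$ the variance $\sum_m c_{m,n}^2 x^{2m}$ is uniformly bounded, so Jensen's formula on the disk $D(1/4,3/4)$ together with a standard anti-concentration estimate at $x=1/4$ gives $\E N_{n}[0,1/2] = O(1)$, and the narrow strip $[1-1/n,1]$ contributes $O(\log\log n)$ by applying Jensen on a disk of radius $\Theta(1/n)$ around $1$. For each dyadic interval $I_k$ with midpoint $x_k$ (so $1-x_k \sim 2^{-k}$), and with a small parameter $c>0$ to be chosen, set $R_k := c(1-x_k)$ and cover $I_k$ by $O(1/c)$ disks of radius $R_k$. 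On each such disk, Jensen's formula on $D(x_k, 2R_k)$ yields
\begin{equation}
N_n\bigl(D(x_k,R_k)\cap\R\bigr) \;\le\; C\,\log \frac{\sup_{|z-x_k|\le 2R_k}|f_n(z)|}{|f_n(x_k)|}. \nonumber
\end{equation}
For the denominator, by Assumption B the variance $\sigma_k^2 := \sum_m c_{m,n}^2 x_k^{2m}$ is $\Theta(k)$; a quantitative anti-concentration estimate (Berry--Esseen, using the finite $(2+\ep_0)$-moment hypothesis) yields $|f_n(x_k)| \gtrsim \sigma_k \sim \sqrt{k}$ with high probability.

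The numerator is the main obstacle. As the authors emphasize, the naive triangle inequality gives $\sup|f_n| \lesssim \sum m^{-1/2}|\xi_m|\cdot 1^m \sim 2^{k/2}$, far exceeding the typical size $\sqrt{k}$, and a derivative-based $\ep$-net fares no better since $f_n'$ lies in the super-critical regime $\rho=1/2$. I would implement the paper's ``simple solution'': split $f_n = P_M + Q_M$ at a carefully chosen degree $M = M_k$, bound the low-degree part $P_M$ by the triangle inequality (which is affordable because its mean is $O(\sqrt{M})$, comparable to $\sigma_k$ up to polylog once $M$ is chosen slightly larger than $\sigma_k^2$), and Taylor-expand the high-degree part $Q_M(z) = \sum_{j\ge 0}(z-x_k)^j B_j$ around $x_k$. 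A direct calculation from $|c_{m,n}| \sim m^{-1/2}$ yields $\sqrt{\mathrm{Var}(B_j)} \lesssim 2^j/(j^{3/4}(1-x_k^2)^j)$, so with our choice $R_k = c(1-x_k)$ the tail series $\sum_{j\ge 1} R_k^j |B_j|$ becomes a geometric-type sum of order $O(c)$ with high probability, independently of $k$.

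Consequently, on a high-probability event $\sup_{|z-x_k|\le 2R_k}|f_n(z) - f_n(x_k)| = O(c)$, and together with $|f_n(x_k)| \gtrsim \sqrt{k}$ this yields $\log(\sup/|f_n(x_k)|) = O(c/\sqrt{k})$. The exceptional event is absorbed using the deterministic bound $N_n \le n$ combined with higher-order moments of $|f_n(x_k)|^{-1}$ (from the anti-concentration tail) and of $\sup|f_n|$ (from Assumption A). This gives $\E N_n\bigl(D(x_k,R_k)\cap\R\bigr) = O(c/\sqrt{k})$; summing over the $O(1/c)$ disks that cover $I_k$ yields $\E N_n(I_k) = O(1/\sqrt{k})$ with the constant independent of $c$ after $c \to 0$ sufficiently slowly, and $\sum_{k=1}^K O(1/\sqrt{k}) = O(\sqrt{\log n}) = o(\log n)$. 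The central technical obstacle is precisely the tight supremum estimate on disks of radius $\ll 1-x_k$ near the unit circle, which is exactly the content of the Taylor-truncation trick.
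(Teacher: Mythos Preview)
Your route is genuinely different from the paper's. The paper does \emph{not} bound $\E N_n(I_k)$ directly via Jensen; instead it verifies the Universality Conditions~\eqref{cond-delocal}--\eqref{cond-poly} on each local ball (the Taylor-truncation of Lemmas~\ref{lm:bddn}--\ref{lm:m:derivative} is used only to check the boundedness condition~\eqref{cond-bddn}, yielding $\sup|f_n|\le C\alpha^{-1/2}$, a bound that is merely polynomial in the error parameter, not $O(c)$), then invokes Theorem~\ref{thm:universality} to transfer to Gaussian coefficients, and finally computes the Gaussian count by Kac--Rice (Section~\ref{sec:01:Gau}). The universality comparison loses a factor $(\log L)^{-\alpha}$ per scale, so after summing the paper only gets $o(\log n)$ for general $\xi_m$; the sharper $O(\sqrt{\log n})$ you aim for is precisely Conjecture~\ref{conj:critical}.

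The gap in your scheme is the exceptional-event bookkeeping. To get $\E N_n(D(x_k,R_k))=O(c/\sqrt{k})$ from Jensen you must control $\E[\log(1+\Delta/|f_n(x_k)|)]$, and the dangerous regime is $|f_n(x_k)|$ small. Your remedies do not work: ``higher-order moments of $|f_n(x_k)|^{-1}$'' are infinite (the limiting law has positive density at $0$), and Berry--Esseen with only $(2+\ep_0)$-moments gives $\P(|f_n(x_k)|\le t)\le Ct/\sigma_k + Ck^{-1-\ep_0/2}$, whose second term is a \emph{floor} independent of $t$; this floor makes $\int_0^\infty \P(|f_n(x_k)|<e^{-t})\,dt$ diverge and blocks any bound on $\E[\log^{+}|f_n(x_k)|^{-1}]$. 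The fallback $N_n\le n$ is orders of magnitude too crude. The paper's anti-concentration (Lemma~\ref{lmanti_concentration} and \eqref{smallball_kac}) does reach small scales, but only at a \emph{random} point $x'=x_0e^{i\theta}$ on a short arc, not at the prescribed center $x_k$, so plugging it into your single-point Jensen bound requires additional argument. A related looseness: ``$|f_n(x_k)|\gtrsim\sqrt{k}$ with high probability'' is false as stated --- this event has probability bounded away from $1$ by a constant --- so on the good event you get $N=0$, but the bad event has constant probability and must carry the entire $O(1/\sqrt{k})$ bound, which brings you right back to needing small-scale anti-concentration. Note also that even the ``easy'' claim $\E N_n[0,1/2]=O(1)$ via Jensen at $x=1/4$ runs into the same small-ball issue for discrete $\xi_m$; the paper handles $[0,1]\setminus\mathcal I_n$ differently, by the interlacing Lemma~\ref{lm:interlace} to pass to a super-critical derivative and then citing Proposition~\ref{prop:main interval:0}.
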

\begin{lemma}[Roots in $(1, \infty)$]\label{lm:allrho:1}
		 Let $\rho$ be a real number. Let $\ep_0$, $C_{2+\ep_0}$ be positive constants and $(a_m)$ be a sequence converging to $0$. Let
		 $$f_n(x) = \sum_{m=0}^n c_{m, n} \xi_m x^m, $$
		 be a random polynomial where $\xi_m$ are independent  random variables satisfying Assumption-A with parameters $\ep_0$ and $C_{2+\ep_0}$. Assume that the deterministic coefficients $(c_{m, n})_{0\le m\le n}$ satisfy Assumption-B with parameters $\rho$ and $(a_m)$. We have
		 \begin{equation}
	\E N_n(1,\infty) = \frac{1+o(1)}{2\pi}\log n.\nonumber
	\end{equation}
\end{lemma}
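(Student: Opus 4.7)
The plan is to split $(1,\infty)$ into a thin boundary layer adjacent to $1$ and a bulk region, as suggested in the ``Proof ideas'' subsection:
\[
(1,\infty) \;=\; \bigl(1,\; 1 + \tfrac{\log n}{n}\bigr) \;\cup\; \bigl(1 + \tfrac{\log n}{n},\; \infty\bigr),
\]
and to show that the bulk produces the full main term $(1+o(1))/(2\pi)\log n$ while the boundary layer contributes only $o(\log n)$.

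For the bulk, I would apply the reciprocal substitution $y = 1/x$, which identifies zeros of $f_n$ in the bulk range with zeros of the reversed polynomial
\[
\tilde f_n(y) \;:=\; y^n f_n(1/y) \;=\; \sum_{k=0}^n c_{n-k,n}\,\xi_{n-k}\, y^k
\]
in the subinterval $(0,\; 1 - \tfrac{\log n}{n} + o(\tfrac{\log n}{n})) \subset (0,1)$. Setting $\tilde c_k := c_{n-k,n}$, the reversed profile $|\tilde c_k| \sim (n-k)^\rho$ has the property that, for $y$ bounded away from $1$, the low-$k$ terms with $\tilde c_k \sim n^\rho$ dominate. After dividing $\tilde f_n$ by $n^\rho$ the effective coefficients become nearly constant, i.e.\ of Kac type, so the local universality method of \cite{DOV} applies and reduces the expected zero count in this region to its Gaussian counterpart. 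The Gaussian count is then evaluated by the Edelman--Kostlan/Kac--Rice formula applied to the covariance kernel $K_n(u,v) = \sum_m c_{m,n}^2 (uv)^m$; an asymptotic analysis of $\log K_n(x,x)$ for $x > 1$ produces a density of the form $1/(\pi(x^2-1))$ plus lower-order corrections, and integrating from $1+(\log n)/n$ to $\infty$ yields $(1+o(1))/(2\pi)\log n$, uniformly in $\rho$. The universality in $\rho$ reflects that the prefactor $n^{2\rho}$ arising from $c_{m,n}^2 \sim m^{2\rho}$ near $m = n$ enters only additively in $\log K_n$ and is annihilated by the mixed partial derivatives in $u$ and $v$.

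For the boundary layer, I would bound the zero count by Jensen's formula. Cover $(1, 1+(\log n)/n)$ by a bounded number of disks $D_j$ of radius $r_n \asymp (\log n)/n$ centered on the real axis, so that on each disk Jensen's inequality yields
\[
N_{f_n}(D_j \cap \R) \;\le\; \frac{1}{\log 2}\, \log\!\frac{\max_{|z - x_j|\le 2 r_n}|f_n(z)|}{|f_n(x_j)|}.
\]
The numerator is controlled by the crude bound $|f_n(z)|\le \sum_m |c_{m,n}\xi_m|(x_j+2r_n)^m$, whose expectation is at most polynomial in $n$, and the denominator is handled by an anti-concentration estimate on $|f_n(x_j)|$ at a suitably chosen probe point $x_j$---for example, a Berry--Esseen-type estimate combined with the fact that $\Var f_n(x_j)$ is nondegenerate. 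Taking expectations and summing over the disks yields $O(\log \log n) = o(\log n)$ for the boundary-layer contribution.

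The main obstacle is making the boundary-layer estimate uniform in $\rho$. The variance $\Var f_n(x)$ near $x = 1$ grows polynomially for $\rho > -1/2$, only logarithmically at $\rho = -1/2$, and remains bounded for $\rho < -1/2$, so the anti-concentration lower bound on $|f_n(x_j)|$ must be tuned to each regime; in the subcritical case, where the distribution of $f_n(x_j)$ is no longer universal, producing such a bound requires delicate input, and is the subtle point of the argument. A secondary technical issue is that the reversed polynomial's coefficient profile $|\tilde c_k|\sim(n-k)^\rho$ does not fit the global hypothesis $|c_k|\sim k^\rho$ of \cite{DOV}, so the universality step must be applied only locally in the region where the effective coefficients are approximately constant, likely via truncation at the appropriate scale.
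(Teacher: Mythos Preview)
Your overall strategy matches the paper's: the reciprocal substitution $g_n(x)=x^n c_n^{-1}f_n(1/x)$, the observation that the reversed coefficients $d_m=c_{n-m}/c_n$ satisfy $d_m=1+o(1)$ for $m=o(n)$ so that $g_n$ looks Kac-like on the core interval, reduction to the Gaussian case by local universality, and a Kac--Rice computation in the Gaussian case. The paper verifies the universality conditions directly for $g_n$ on the interval $\I_n=[1-e^{-(\log n)^{1/5}},\,1-e^{(\log n)^{1/5}}/n]$ rather than citing \cite{DOV}, precisely because the reversed coefficient profile does not globally fit that paper's hypotheses; your ``local truncation'' remark is on target here.

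The real point of departure is the boundary layer. You propose Jensen plus anti-concentration at probe points, and you correctly flag that anti-concentration for $f_n(x_j)$ with $x_j\approx 1$ is non-uniform in $\rho$ and genuinely delicate when $\rho<-1/2$. The paper sidesteps this entirely with a one-line trick (Lemma~\ref{lm:interlace}): between consecutive zeros of $f_n$ there is a zero of $f_n'$, so $N_{f_n}(J)\le k+N_{f_n^{(k)}}(J)$ for any interval $J$. Taking $k=\lceil -\rho\rceil$ pushes the problem into the super-critical regime $\rho+k\ge 0$, where the known estimate $\E N^2([0,\infty)\setminus(\I_n\cup\I_n^{-1}))\ll(\log n)^{4/5}$ from \cite{nguyenvuCLT} applies directly (Lemma~\ref{lm:main interval}). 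This gives the $o(\log n)$ bound outside the core interval uniformly in $\rho$ with no anti-concentration input at all. Your route could in principle be completed, but the interlacing reduction is both simpler and what makes the lemma cleanly uniform in $\rho$.
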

We note that Lemma \ref{lm:allrho:1} holds for all $\rho$ (rather than just for $\rho =  -1/2$), which may be of independent interest.

For the proofs, we use the universality approach which roughly speaking reduces the general distribution of $\xi_m$ to the case that $\xi_m$ are iid standard Gaussian. We first start with a reduction to a small neighborhood around 1.
\subsection{A reduction to the core interval} As typical for the generalized Kac family with $\rho>-1/2$, the real roots concentrate near $\pm 1$  (see for example, \cite{kabluchko2014asymptotic, DOV}). In this section, we extend this for $\rho\le -1/2$.

Let 
\begin{equation}\label{def:In}
\I_n = \left [1 - \exp\left (-(\log n)^{1/5}\right ), 1-\exp\left ((\log n)^{1/5}\right )/n\right ].
\end{equation}
The choice of $\I_n$ is not strict; basically, we need $\I_n$ to be of the form $[1-\ep_n, 1 - \frac{1}{\ep'_n n}]$ where $\ep_n$ and $\ep_n'$ go to $0$ faster than $\frac{1}{\log n}$ but not too fast.
The following result shows that for generalized Kac polynomials with growth power larger than $-1/2$, most of the roots stay inside $\I_n$.
 \begin{prop}\cite[Proposition 2.3]{nguyenvuCLT}\label{prop:main interval:0} Let $p_n = \sum_{m=0}^{n} a_m\xi_m x^{m}$ where $a_m = \Theta(m^{\tau})$ for some $\tau>-1/2$. Then 
	\begin{equation}
	\E N_{p_n}^{2}([0, 1]\setminus \I_n) \ll (\log n)^{4/5}  \quad\text{and}\quad \E N_{p_n}^{2}([1, \infty)\setminus \I_n^{-1}) \ll (\log n)^{4/5}\nonumber
	\end{equation}
	where $\I_n^{-1} = \{x^{-1}: x\in \I_n\}$.
\end{prop}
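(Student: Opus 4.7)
The plan is to cover $[0,1]\setminus \I_n$ by dyadic intervals and, on each, apply Jensen's formula combined with anti-concentration. Split $[0,1]\setminus \I_n = P_1 \cup P_2$ where $P_1 = [0, 1-e^{-(\log n)^{1/5}}]$ is the bulk and $P_2 = (1-e^{(\log n)^{1/5}}/n, 1]$ is the narrow strip next to $1$. The core tool is: for any $x_0\in(0,1)$ and a complex disk $D$ centered at $x_0$ contained in $\{|z|<1\}$, the number $N_{p_n}(I)$ of zeros in a real interval $I\subset D$ is bounded by $C\log(\sup_{z\in D}|p_n(z)|/|p_n(x_0)|)$ via Jensen's formula. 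The task then reduces to a supremum upper bound on a disk and an anti-concentration lower bound at a well-chosen point.

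For $P_1$, cover by dyadic intervals $J_k=[1-2^{-k},1-2^{-k-1}]$ for $k=0,1,\dots,K$ with $K=O((\log n)^{1/5})$. On each $J_k$ pick the midpoint $x_k$ and the complex disk $D_k$ of radius $2^{-k-3}$ around $x_k$. Using $|a_m|=O(m^\tau)$ with $\tau>-1/2$, we get $\E[\sup_{z\in D_k}|p_n(z)|]\ll 2^{k(\tau+1)}$, and Assumption-A (via the $(2+\ep_0)$-moment) controls the upper tail enough to give $\E\log^2\sup_{D_k}|p_n|=O(k^2)$. For the lower bound, $\Var p_n(x_k)\asymp 2^{k(2\tau+1)}$ (the key place $\tau>-1/2$ is used, so the variance truly grows with $k$), and Berry--Esseen together with an Esseen/Rogozin small-ball inequality yields $\PP(|p_n(x_k)|<t)\ll t\cdot 2^{-k(\tau+1/2)}+n^{-c}$, whence $\E\log^2(1/|p_n(x_k)|)=O(k^2)$. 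Combining via Jensen, $\E N_{p_n}(J_k)^2=O(k^2)$, and Cauchy--Schwarz over the $K$ dyadic intervals yields
\begin{equation*}
\E N_{p_n}(P_1)^2 \le K\sum_{k=0}^{K}\E N_{p_n}(J_k)^2 \ll K\cdot K^3=O(K^4)=O((\log n)^{4/5}).
\end{equation*}

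For $P_2$, rescale via $y=n(1-x)$; then $P_2$ corresponds to $y\in(0,e^{(\log n)^{1/5}}]$. Cover this by dyadic intervals $[2^j/n,2^{j+1}/n]$ in $x$-space for $j=0,\dots,J$ with $J=O((\log n)^{1/5})$. At scale $y\asymp 2^j$, $\Var p_n(1-y/n)\asymp(n/y)^{2\tau+1}$ is large, and the same Jensen/anti-concentration argument (with a disk of radius $\asymp 2^j/n$) produces $\E N_{p_n}(P_2)^2=O((\log n)^{4/5})$. For $[1,\infty)\setminus \I_n^{-1}$, the inversion $x\mapsto 1/x$ maps it to a subset of $(0,1]\setminus\I_n$, and the reversed polynomial $x^n p_n(1/x)=\sum_m a_m\xi_m x^{n-m}$ has coefficients in descending powers still of order $m^\tau$ away from the endpoint, so the bulk of the argument transfers mutatis mutandis.

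The main technical obstacle is obtaining anti-concentration estimates that are \emph{uniform across all dyadic scales} for general (non-Gaussian) coefficients $\xi_m$ satisfying only Assumption-A. This requires combining Berry--Esseen-type central limit bounds with an Esseen/Rogozin small-ball inequality, and carefully tracking the dependence of the constants on $\tau$ and on the dyadic index $k$ so that $\E\log^2(1/|p_n(x_k)|)=O(k^2)$ uniformly in $k$. Once this uniform control is in place, the dyadic summation is essentially mechanical and delivers the advertised $(\log n)^{4/5}$ bound.
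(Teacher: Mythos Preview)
The paper does not give its own proof of this proposition: it is quoted verbatim as \cite[Proposition 2.3]{nguyenvuCLT} and used as a black box (the paper only proves the extension Lemma~\ref{lm:main interval} by differentiating $\lceil -\rho\rceil$ times and invoking the cited result). So there is nothing in the present paper to compare your argument against.

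As a standalone sketch your approach is in the right spirit---Jensen's inequality on dyadic annuli together with anti-concentration is exactly the machinery behind results of this type---and the arithmetic $K^4=(\log n)^{4/5}$ explains where the exponent comes from. Two points deserve attention if you want to turn this into a proof. First, your treatment of $[1,\infty)\setminus \I_n^{-1}$ via the reversed polynomial is not quite as claimed: the coefficient of $x^j$ in $x^np_n(1/x)$ is $a_{n-j}\xi_{n-j}\sim (n-j)^{\tau}\xi_{n-j}$, which is \emph{not} $\Theta(j^{\tau})$; near $1$ the low-degree coefficients are essentially constant of size $n^{\tau}$, so the relevant variance and sup estimates need to be redone (they still work, but not ``mutatis mutandis''). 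Second, the bound $\E\log^2(1/|p_n(x_k)|)=O(k^2)$ from a small-ball estimate of the form $\P(|p_n(x_k)|<t)\ll t/\sigma+n^{-c}$ requires care: the $n^{-c}$ floor means the tail of $\log(1/|p_n(x_k)|)$ does not decay beyond a certain threshold, and you need the deterministic bound $N\le n$ (or a separate large-deviation argument) to close the second-moment computation. These are fixable, but they are where the real work lies rather than in the dyadic summation.
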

Extending this proposition, we get
\begin{lemma}\label{lm:main interval} Proposition \ref{prop:main interval:0} holds for all $\rho\in \R$. In particular, we also have
	\begin{equation}
	\E N_{f_n} ([0, \infty)\setminus (\I_n\cup \I_n^{-1})) \ll (\log n)^{2/5} =o(\sqrt{\log n}).\label{eq:bound:outside}
	\end{equation}
\end{lemma}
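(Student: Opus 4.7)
The plan is to extend Proposition \ref{prop:main interval:0} so that it holds for every $\rho \in \R$, and then deduce \eqref{eq:bound:outside} from the elementary inequality $\E N \le \sqrt{\E N^{2}}$. For the target exponent $\rho = -1/2$ (and indeed any $\rho$ in the sub/critical range), the template that works for $\tau > -1/2$ survives with only one genuinely new ingredient: the anti-concentration at points where the variance $\Var f_n(x)$ does not grow. I begin by reducing to $[0,1] \setminus \I_n$ using the $x\mapsto -x$ symmetry (which preserves Assumptions A and B) and the reciprocal polynomial $x^n f_n(1/x)$, whose reversed coefficients $c_{n-m,n}\xi_{n-m}$ still obey a power-law Assumption-B (with an exponent that may differ from $\rho$ but, crucially, is still some real number, so the same argument will apply). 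Writing $[0,1]\setminus \I_n = J_L \cup J_R$ with $J_L=[0,1-r_n]$, $J_R=[1-r_n',1]$, $r_n=\exp(-(\log n)^{1/5})$, and $r_n'=\exp((\log n)^{1/5})/n$, I partition $J_L$ into dyadic intervals $J_k=[1-2^{-k},1-2^{-k-1}]$ for $1\le k \le K\asymp (\log n)^{1/5}$ (plus $J_0=[0,1/2]$), and partition $J_R$ analogously into $O((\log n)^{1/5})$ pieces.

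For each $J_k$ with midpoint $x_k$ and half-length $r_k$, Jensen's formula on the disk $B(x_k,2r_k)$ gives
\begin{equation*}
N_{f_n}(J_k) \;\le\; C\bigl(\log^+ M_k + \log^-|f_n(x_k)|\bigr),\qquad M_k := \max_{|z-x_k|=2r_k}|f_n(z)|.
\end{equation*}
For the upper bound, Minkowski's inequality combined with Assumptions A and B yields
\begin{equation*}
\|M_k\|_{2+\e_0} \;\le\; \sum_{m=0}^{n}|c_{m,n}|\,\|\xi_m\|_{2+\e_0}\,(x_k+2r_k)^m \;\ll\; \sum_m m^\rho (1-r_k/2)^m,
\end{equation*}
which is at most a polynomial in $1/r_k$ for any $\rho$; hence $\log^+M_k \ll \log n$ uniformly in $k$ and with all moments controlled. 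For the lower bound, let $\sigma_k^2 := \Var f_n(x_k) = \sum_m c_{m,n}^2 x_k^{2m}$. If $\sigma_k \to \infty$ (always the case for $\rho \ge -1/2$; for $\rho < -1/2$ only when $x_k$ is sufficiently close to $1$), a Berry--Esseen style small-ball estimate gives $\P(|f_n(x_k)|\le t) \ll t/\sigma_k + \sigma_k^{-\alpha}$ with $\alpha=\alpha(\e_0)>0$. When $\sigma_k = O(1)$, I isolate an index $m^{*}$ maximizing $c_{m,n}^2 x_k^{2m}$, condition on $(\xi_m)_{m\ne m^{*}}$, and invoke Assumption-A on the remaining one-dimensional variable to obtain the same type of bound. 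Integrating gives $\E\bigl(\log^-|f_n(x_k)|\bigr)^2 = O((\log\log n)^2)$.

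Summing the bounds $\E N_{f_n}^2(J_k)$ over the $O((\log n)^{1/5})$ dyadic pieces and applying Cauchy--Schwarz yields $\E N_{f_n}^2(J_L) + \E N_{f_n}^2(J_R) \ll (\log n)^{4/5}$, which is precisely the extension of Proposition \ref{prop:main interval:0} to all $\rho$. Taking square roots and adding the corresponding bound on $[1,\infty)\setminus \I_n^{-1}$ delivers $\E N_{f_n}\bigl([0,\infty)\setminus(\I_n\cup \I_n^{-1})\bigr) \ll (\log n)^{2/5} = o(\sqrt{\log n})$, as required. The main obstacle, and the only substantive departure from the proof in the $\tau>-1/2$ setting, is the anti-concentration at scales where $\sigma_k$ does not diverge; the one-coordinate conditioning argument above is the key device that makes the argument robust across all $\rho\in\R$, including the critical exponent $\rho=-1/2$ of interest.
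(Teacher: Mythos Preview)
Your approach is far more elaborate than the paper's, and it contains a genuine gap. The paper's proof is a three-line reduction: since differentiating $f_n$ once raises the power-law exponent by one, taking $k=\lceil -\rho\rceil$ derivatives produces a polynomial $f_n^{(k)}$ whose coefficients have growth $\Theta(m^{\rho+k})$ with $\rho+k\ge 0>-\tfrac12$. Rolle's theorem (Lemma~\ref{lm:interlace}) gives $N_{f_n}(J)\le k+N_{f_n^{(k)}}(J)$ on each of the three intervals comprising $[0,\infty)\setminus(\I_n\cup\I_n^{-1})$, so Proposition~\ref{prop:main interval:0} applied to $f_n^{(k)}$ (now legitimately, since its exponent exceeds $-\tfrac12$) immediately yields $\E N_{f_n}^2\ll (\log n)^{4/5}$, and Cauchy--Schwarz gives \eqref{eq:bound:outside}. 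There is no need to reopen the proof of Proposition~\ref{prop:main interval:0} at all.

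Your plan to redo that proof breaks at the anti-concentration step. When $\sigma_k=O(1)$ (which for $\rho<-\tfrac12$ happens at \emph{every} $x_k$, and for $\rho=-\tfrac12$ at every $x_k$ bounded away from $1$), conditioning on all but one coordinate and ``invoking Assumption-A'' does not yield a small-ball bound that decays in $t$: Assumption-A says nothing about $\sup_z\P(|\xi_{m^*}-z|\le t)$ beyond the trivial fact that it is bounded away from $1$ for small $t$ (think of Rademacher $\xi_{m^*}$, where this supremum is $\tfrac12$ for all $t<1$). A constant bound is not integrable against $d(\log t)$, so you cannot conclude $\E(\log^-|f_n(x_k)|)^2<\infty$ this way. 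To make your route work you would need a genuine multi-term anti-concentration device, e.g.\ the lacunary-subsequence lemma the paper uses later for \eqref{eq:int:anticoncentration}. A smaller issue: the reversed coefficients $c_{n-m,n}$ of the reciprocal polynomial do \emph{not} satisfy Assumption-B for any fixed exponent (they behave like $((n-m)/n)^\rho$, not like $m^{\rho'}$), so your symmetry reduction to $[0,1]$ is not justified as stated; but this is moot since Proposition~\ref{prop:main interval:0} already handles $[1,\infty)\setminus\I_n^{-1}$ directly.
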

Before proving this lemma, we need a simple observation.
\begin{lemma} \label{lm:interlace}
	Let $P$ be a polynomial and $J$ be an interval on the real line. Let $k$ be any positive integer. We have
	$$N_P(J)\le k+ N_{P^{(k)}}(J).$$
\end{lemma}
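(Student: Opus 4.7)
\textbf{Proof plan for Lemma \ref{lm:interlace}.} The plan is to reduce to the case $k=1$ (a careful version of Rolle's theorem that is sensitive to multiplicities) and then iterate. Writing $N_P(J)$ for roots counted with multiplicity, the goal of the base case is to show $N_P(J) \le 1 + N_{P'}(J)$; an immediate induction on $k$ then yields the full statement since $(P^{(j)})' = P^{(j+1)}$.

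For the base case, I would list the \emph{distinct} zeros of $P$ inside $J$ as $y_1 < y_2 < \cdots < y_t$ with multiplicities $\mu_1,\dots,\mu_t$, so that $N_P(J) = \mu_1+\cdots+\mu_t =: s$. The derivative $P'$ then has a zero of multiplicity at least $\mu_i - 1$ at each $y_i$, contributing $\sum_i (\mu_i - 1) = s - t$ to $N_{P'}(J)$. In addition, between each pair of consecutive zeros $y_i, y_{i+1}$ of $P$ (both of which lie in $J$), Rolle's theorem produces at least one further zero of $P'$ in the open interval $(y_i, y_{i+1}) \subset J$; this contributes at least $t-1$ zeros, disjoint from the previous $s - t$ since those all lie at the $y_i$. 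Adding, $N_{P'}(J) \ge (s - t) + (t - 1) = s - 1 = N_P(J) - 1$, which is the desired inequality.

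I then induct: assuming $N_P(J) \le j + N_{P^{(j)}}(J)$, apply the base case to the polynomial $P^{(j)}$ on the same interval $J$ to obtain $N_{P^{(j)}}(J) \le 1 + N_{P^{(j+1)}}(J)$, and combine. There is no real obstacle here; the only point that needs a moment of attention is the multiplicity bookkeeping, ensuring that the intermediate zeros produced by Rolle are genuinely distinct from the repeated zeros of $P$, which is automatic because they lie strictly between consecutive $y_i$. The hypothesis that $J$ is an interval (as opposed to an arbitrary set) is used precisely so that whenever $y_i, y_{i+1} \in J$ the Rolle zero in $(y_i, y_{i+1})$ also lies in $J$.
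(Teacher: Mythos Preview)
Your proof is correct and follows the same approach as the paper: establish the case $k=1$ via Rolle's theorem and then iterate. The paper's version is terser and does not spell out the multiplicity bookkeeping, so your careful separation into the $s-t$ zeros at the $y_i$ plus the $t-1$ Rolle zeros between them is a welcome elaboration rather than a departure.
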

\begin{proof}
	Since between any two roots of $P$, there must be at least one root of $P'$, we get
	$$N_P(J)\le 1+ N_{P'}(J).$$
	Repeating this inequality $k$ times, we obtain the claim.
\end{proof}

	\begin{proof}[Proof of Lemma \ref{lm:main interval}]  
	Since the set $([0, \infty)\setminus (\I_n\cup \I_n^{-1}))$ is a union of three intervals, we can apply Lemma \ref{lm:interlace} to get for any integer $k\ge 0$ that
	$$N_{f_n} ([0, \infty)\setminus (\I_n\cup \I_n^{-1}))\le 3k+N_{f^{(k)}_n } ([0, \infty)\setminus (\I_n\cup \I_n^{-1})).$$
 Since $f^{(k)}_n = \sum_{m=0}^{n-k} \Theta(m^{\rho+k}) \xi_{m+k} x^{m}$,  by choosing $k = \lceil -\rho\rceil$, $f^{(k)}_n$ satisfies the hypothesis of Proposition \ref{prop:main interval:0} with $\tau = \rho+k\ge 0$, and so,
	$$\E N^{2}_{f_n} ([0, \infty)\setminus (\I_n\cup \I_n^{-1})) \ll 1+(\log n)^{4/5} \ll (\log n)^{4/5}.$$
	Applying this bound and Holder's inequality, we get \eqref{eq:bound:outside}.
\end{proof}

\subsection{Universality theorem} In view of Lemmas \ref{lm:-1/2:01} and \ref{lm:allrho:1}, the result in Lemma \ref{lm:main interval} allows us to restrict to the main interval $\I_n$. To handle the roots here, we shall use the so-called universality method -- reducing the general case to the case that $\xi_i$ are iid Gaussian. We first state several conditions that are sufficient for universality (for a first reading, we recommend skipping these conditions and go straight to Theorem \ref{thm:universality}). Consider the polynomials
$$p_n = \sum_{m=0}^{n} a_m \xi_m x^{m}\quad\text{and}\quad \tilde p_n = \sum_{m=0}^{n} a_m \tilde \xi_m x^{m}$$
where $\xi_m$ and $\tilde \xi_m$ are independent random variables satisfying Assumption-A with some positive constants $\ep_0, C_{2+\ep_0}$ - in practice, we take $\tilde \xi_m$ to be standard Gaussian. Let us look at a local interval $[x_0-r, x_0+r]$ for some $x_0\in \R$, $r>0$, and an error factor $\delta\in (0, 1)$. Fix some positive constants $\alpha_1$ and $C_1$. We say that $p_n$ satisfies the universality condition with parameters $x_0, r, \delta$ (which may depend on $n$) and constants $\alpha_1, C_1$ (independent of $n$) if there exists a constant $C$ such that the following hold. Let $A = 6(C_1+2) + \frac{\alpha_1\ep_0 }{60}$ and $c_0 = \frac{\alpha_1\ep_0}{10^9}$ (these choices of $A$ and $c_0$ are for specification only. In practice, these conditions often hold for any choice of $A>0$ and $c_0>0$).

{\bf Universality Conditions.}
\begin{enumerate}
	\item {\it Delocalization:}\label{cond-delocal} For every $z\in B (x_0, r)$, it holds for all $m = 0, \dots, n$ that
	$$\frac{|a_m||z|^{m}}{\sqrt{\sum _{j = 0}^{n}|a_j|^{2}|z|^{2j}}}\le C \delta^{\alpha_1}.$$ 
	
	\item {\it Derivative growth:}\label{cond-repulsion} For any real number $y\in [x_0-r, x_0+r]$,
	\begin{equation}
	\sum_{m=0}^{n} m^{2}|a_m|^{2}|y|^{2m-2}\le C r^{-2} \delta^{-c_0}\sum_{m=0}^{n}|a_m|^{2}|y|^{2m},\nonumber
	\end{equation}
	\begin{equation}
	\sum_{m=0}^{n} \sup_{z\in B(y, r/2)} m^{4}|a_m|^{2}|z|^{2m-4}\le C r^{-4}\delta^{-c_0}\sum_{m=0}^{n} |a_m|^{2}|y|^{2m}.\nonumber
	\end{equation}

	\item {\it Anti-concentration:}\label{cond-smallball} With probability at least $1 - C\delta^{A}$, there exists $x'\in B(x_0, r/100)$ for which $|f_n(x')|\ge \exp(-\delta^{-c_0})$.

	\item {\it Boundedness:} \label{cond-bddn} With probability at least $1 - C \delta^{A}$, $|p_n(w)|\le \exp(\delta^{-c_0})$ for all $w\in B (x_0, r/16)$.

	\item \label{cond-poly} It holds that
	$$\E N^{3}\textbf{1}_{N\ge \delta^{-C_1}} \le C,$$
	where $N$ is the number of zeros of $p_n$ in the disk $B(x_0, r/20)$. We note that throughout this paper, if $p_n$ is identically 0, we adopt the (admittedly artificial) convention that $p_n$ has no roots in $\C$.
\end{enumerate}

Here, $B(x_0, r)$ denotes the closed ball on the complex plane with center $x_0$ and radius $r$. The following theorem is a simplified version of \cite[Theorem 2.6]{nguyenvurandomfunction17}. 
\begin{thm}[Comparison]\label{thm:universality} Assume that $\xi_i$ and $\tilde \xi_i$ are independent real-valued random variables with mean 0 and variance 1. Assume that there exist positive constants $\ep_0, C_{2+\ep_0}$ such that $\E |\xi_m|^{2+\ep_0}<C_{2+\ep_0}$ and $\E |\tilde \xi_m|^{2+\ep_0}<C_{2+\ep_0}$ for all $m$. 
	Consider the random polynomials
	$$p_n = \sum_{m=0}^{n} c_{m, n} \xi_m x^{m}\quad\text{and}\quad \tilde p_n = \sum_{m=0}^{n} c_{m, n} \tilde \xi_m x^{m}$$
	where the $c_{m, n}$ are deterministic coefficients that may depend on $n$.
	Assume furthermore that there exist constants $\alpha_1, C_1$ such that the Universality Conditions \eqref{cond-delocal}--\eqref{cond-poly} hold for both $p_n$ and $\tilde p_n$ with parameters $x_0, r, \delta$ and constants $\alpha_1, C_1$. 
	Then there exist positive constants $C', c$ depending only on the constants $\ep_0, C_{2+\ep_0}, \alpha_1, C_1, C$ (but not on the parameters $x_0, r, \delta$ and $n$) such that the following holds.
	
	For any function $G: \R \to \R$ supported on $ [x_0-10^{-5}r, x_0+10^{-5} r] $ with continuous derivatives up to order $6$ and $||G^{(a)}||_{\infty}\le r^a$ for all $0\le a\le 6$, we have
	\begin{eqnarray} 
	\left |\E\sum_{\zeta:\  p_n(\zeta) =0} G\left (\zeta  \right) 
	-\E\sum_{\tilde \zeta: \ \tilde p_n(\tilde \zeta) =0} G\left (\tilde \zeta  \right) \right |\le C' \delta^{c}.\nonumber
	\end{eqnarray}
	
\end{thm}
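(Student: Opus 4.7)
The plan is to prove Theorem \ref{thm:universality} by the \emph{Lindeberg exchange method}, following the template of \cite{TVpoly, DOV, nguyenvurandomfunction17}. First I would convert the discrete zero sum into a smooth statistic via the Poincar\'e--Lelong (Green's) identity: fix a smooth extension $\widetilde G:\C\to\R$ of $G$ supported in $B(x_0,10^{-4}r)$ with $\|\widetilde G^{(a)}\|_{\infty} \ll r^{a}$ for $0\le a\le 6$, so that
$$\sum_{\zeta:\,p_n(\zeta)=0}\widetilde G(\zeta) \;=\; \frac{1}{2\pi}\int_{\C}(\Delta\widetilde G)(z)\,\log|p_n(z)|\,dA(z),$$
and similarly for $\tilde p_n$. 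Replacing $G$ by $\widetilde G$ only adds the contribution of non-real zeros in $B(x_0,10^{-4}r)$, a correction absorbed at the very end using condition \eqref{cond-poly}. Thus the task reduces to bounding
$$\Bigl|\int_{B(x_0,10^{-4}r)}(\Delta\widetilde G)(z)\bigl[\E\log|p_n(z)|-\E\log|\tilde p_n(z)|\bigr]\,dA(z)\Bigr| \;\le\; C'\delta^{c}.$$

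For the second step, introduce the hybrid polynomial $p_n^{(k)}$ obtained from $p_n$ by replacing $\xi_0,\dots,\xi_{k-1}$ with $\tilde\xi_0,\dots,\tilde\xi_{k-1}$, and telescope
$$\E\log|p_n(z)|-\E\log|\tilde p_n(z)| = \sum_{k=0}^{n}\bigl(\E\log|p_n^{(k)}(z)|-\E\log|p_n^{(k+1)}(z)|\bigr).$$
Write $p_n^{(k)}(z)=a_k(z)+c_{k,n}\xi_k z^{k}$ with $a_k(z)$ independent of $\xi_k$, and Taylor-expand $t\mapsto\log|a_k(z)+t|$ to second order about $t=0$. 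Since $\E\xi_k=\E\tilde\xi_k=0$ and $\E\xi_k^2=\E\tilde\xi_k^2=1$, the linear and quadratic contributions cancel upon the swap, leaving a cubic remainder of size $\bigl(|c_{k,n}z^{k}|/|a_k(z)|\bigr)^{2+\ep_0}$ in expectation after a standard truncation exploiting the uniform $(2+\ep_0)$-moment bound. The delocalization condition \eqref{cond-delocal} is precisely what forces $|c_{k,n}z^{k}|\le C\delta^{\alpha_1}\sqrt{\sum_j|c_{j,n}|^{2}|z|^{2j}}$, so, dividing by a typical value of $|a_k(z)|$ of the same scale, each swap costs at most $\delta^{\alpha_1(2+\ep_0)}$, and summing the $n$ swaps still produces a power of $\delta$ provided the relative size of $\delta$ against $n$ is controlled by condition \eqref{cond-repulsion}.

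The main obstacle will be the logarithmic singularity of $\log|p_n|$ at zeros of the intermediate $p_n^{(k)}$, which invalidates the Taylor expansion on the event where $|a_k(z)|$ is atypically small. I would handle this by the usual two-step regularization: the boundedness condition \eqref{cond-bddn} caps $\log|p_n|$ from above on $B(x_0,r/16)$, while the anti-concentration condition \eqref{cond-smallball} provides a single point $x'\in B(x_0,r/100)$ at which $\log|p_n(x')|\ge -\delta^{-c_0}$ which, by the subharmonicity of $\log|p_n|$ together with the derivative-growth bound \eqref{cond-repulsion}, propagates to an $L^{1}$-averaged lower bound on $\log|p_n|$ over the support of $\Delta\widetilde G$. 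The residual tail event where any of these inputs fails, or where $p_n$ has anomalously many zeros in $B(x_0,r/20)$, contributes at most $O(\delta^{c})$ by H\"older's inequality together with the polynomial-moment condition \eqref{cond-poly}; this is exactly why $A=6(C_1+2)+\alpha_1\ep_0/60$ and $c_0=\alpha_1\ep_0/10^{9}$ are chosen as stated. Combining these three ingredients, the Lindeberg telescope closes to yield the claimed $\delta^{c}$ bound, which is the conclusion of \cite[Theorem 2.6]{nguyenvurandomfunction17} specialized to the polynomial setting; I would finish by verifying that the five conditions listed here indeed imply the abstract hypotheses of that theorem.
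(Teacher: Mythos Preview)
The paper does not prove Theorem~\ref{thm:universality} at all: it is stated as ``a simplified version of \cite[Theorem 2.6]{nguyenvurandomfunction17}'' and used as a black box, with the surrounding text remarking only that the cited result is in fact stronger (it treats higher-degree correlation functions). So there is no proof in the paper for your proposal to be compared against.

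Your outline is a reasonable summary of the Lindeberg-swap strategy that underlies \cite{nguyenvurandomfunction17}, and your final sentence acknowledges that the honest route is simply to check that the five Universality Conditions here match the abstract hypotheses of that theorem. That verification \emph{is} the entire content of what is needed, and you should just do that rather than re-derive the swap argument. Two points in your sketch would need more care if you did try to carry it out in full. First, the passage from the real-root sum $\sum_{\zeta\in\R}G(\zeta)$ to the complex-zero sum $\sum_{\zeta\in\C}\widetilde G(\zeta)$ is not controlled by condition~\eqref{cond-poly} alone: that condition bounds only a tail moment of the total zero count, whereas you need the contribution of \emph{non-real} zeros near the axis to be $O(\delta^c)$ for \emph{both} ensembles, which in \cite{nguyenvurandomfunction17} comes from a separate repulsion estimate driven by condition~\eqref{cond-repulsion}. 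Second, the assertion that ``summing the $n$ swaps still produces a power of $\delta$'' is not automatic from \eqref{cond-repulsion}; in the actual argument one does not sum $n$ individual swap errors but rather exploits that the delocalization bound \eqref{cond-delocal} makes the $\ell^2$-normalized coefficient vector small in $\ell^\infty$, so the cumulative Lindeberg error is controlled by an $\ell^{2+\ep_0}$ versus $\ell^2$ comparison, independent of $n$.
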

 In fact, the result in \cite{nguyenvurandomfunction17} is stronger, involving high-degree correlation functions of the roots. Since we focus on the number of real roots here, we only need this version.

\subsection{Roots inside $[0, 1]$} Toward the proof of Lemma \ref{lm:-1/2:01}, we now use the universality theorem to reduce the task of bounding $\E N_{f_n}(\I_n)$ to bounding $\E N_{\tilde f_n}(\I_n)$ where 
$$\tilde f_n = \sum_{m=0}^{n} c_{m, n} \tilde \xi_m x^{m}$$
with $\tilde \xi_m$ being iid standard Gaussian.
The goal of this subsection is to show the following.
 \begin{lemma} [Reduction to the Gaussian case inside $(0,1)$]\label{lm:reduction:01} Under the hypothesis of Lemma \ref{lm:-1/2:01}, there exist positive constants $C, c'$ depending only on $\ep_0$, $C_{2+\ep_0}$ and the rate of convergence of $\sqrt m |c_{m, n}|$   such that the following holds for every $x_0\in \I_n$. Setting
 	$r = \frac{1-x_0}{3}$ and $\delta = (\log (r^{-1}))^{-1}$, for every function $G: \R \to \R$ supported on $ [x_0-10^{-5}r, x_0+10^{-5}r] $ with continuous derivatives up to order $6$ and $||G^{(a)}||_{\infty}\le r^a$ for all $0\le a\le 6$, we have
 	 \begin{eqnarray} 
 	 \left |\E\sum_{\zeta:\ f_n(\zeta) =0} G\left (\zeta  \right) 
 	 -\E\sum_{\tilde \zeta:\ \tilde f_n(\tilde \zeta) =0} G\left (\tilde \zeta  \right) \right |\le C' \delta^{c}\nonumber
 	 \end{eqnarray}
 \end{lemma}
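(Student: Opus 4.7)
The plan is to apply the universality comparison theorem (Theorem \ref{thm:universality}) directly to $f_n$ and the Gaussian analogue $\tilde f_n$ at the local scale $(x_0, r, \delta)$, with $\alpha_1 = 1/2$ and some fixed $C_1$, by verifying each of the five Universality Conditions for both polynomials. For $x_0 \in \I_n$ one has $r = (1-x_0)/3 \to 0$, $\delta = 1/\log(r^{-1}) \to 0$, and $1 - |z|^2 = \Theta(r)$ uniformly on $B(x_0, r)$. The key analytic input, obtained from $|c_{m,n}|^2 = m^{-1}(1+o(1))$ by integral comparison, is the family of estimates
$$S_j(z) := \sum_{m=0}^n m^{2j}\, |c_{m,n}|^2\, |z|^{2m}, \quad \text{with } S_0(z) = \Theta(\log(1/r)) = \Theta(\delta^{-1}) \text{ and } S_j(z) = \Theta(r^{-2j}) \text{ for } j \ge 1,$$
uniformly for $z \in B(x_0, r)$.

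With these estimates in hand, the delocalization condition is immediate since $|c_{m,n}||z|^m \le 1$ while $\sqrt{S_0(z)} = \Theta(\delta^{-1/2})$, giving a ratio of $O(\delta^{1/2}) = O(\delta^{\alpha_1})$; the derivative-growth condition follows from the estimates on $S_1, S_2$, with generous slack to absorb any $\delta^{-c_0}$. For anti-concentration, I would take $x' = x_0$ and apply a Kolmogorov--Rogozin or Berry--Esseen small-ball bound to $f_n(x_0)/\sqrt{S_0(x_0)}$, using the uniformly bounded $(2+\ep_0)$-moments of $\xi_m$, to obtain $\PP(|f_n(x_0)| \le e^{-\delta^{-c_0}}) = O(\delta^A)$. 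The polynomial zero-count condition then follows from Jensen's formula on $B(x_0, r/10)$: on the (high-probability) intersection of the anti-concentration and boundedness events one gets $N \le C \delta^{-c_0}$, and on the complementary bad event one uses the trivial bound $N \le n$ together with a sufficiently strong tail bound (obtained by iterating the pointwise moment estimate below) to control $\E N^3 \mathbf{1}_{\{N \ge \delta^{-C_1}\}}$.

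The genuine obstacle, flagged in the paper's introduction, is the boundedness condition. The naive bound
$$\sup_{w \in B(x_0, r/16)} |f_n(w)| \le \sum_{m=0}^n |c_{m,n}|\,|\xi_m|\,(x_0+r/16)^m$$
is typically of size $r^{-1/2} = \exp(\Theta(\delta^{-1}))$, vastly exceeding the permitted $\exp(\delta^{-c_0})$; and an $\ep$-net/derivative argument breaks because $f_n'$ sits in the supercritical regime $\rho = 1/2$ and is therefore typically much larger than $f_n$. I would follow the Taylor-expansion strategy signaled in the paper, writing
$$f_n(w) = \sum_{k=0}^{K-1} \frac{f_n^{(k)}(x_0)}{k!}(w - x_0)^k + R_K(w),$$
with $K = K(\delta)$ chosen as a suitable power of $\log(1/\delta)$ divided by $r$. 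For the head, the variance bound $\Var(f_n^{(k)}(x_0)) = \Theta((2k-1)!/r^{2k})$ (a derivative version of $S_0$) combined with $|w-x_0|^k/k! \le (r/16)^k/k!$ yields, via Stirling, a typical contribution of size $16^{-k}/k^{3/4}$ per Taylor term; after pointwise $(2+\ep_0)$-moment bounds and a union bound over $k < K$, the head is at most $\delta^{-O(1)} \ll e^{\delta^{-c_0}}$ with probability $1 - O(\delta^A)$. For the remainder $R_K(w) = \sum_{m \ge K} c_{m,n}\,\xi_m\,[w^m - \sum_{k<K}\binom{m}{k}x_0^{m-k}(w-x_0)^k]$, the factor $(w-x_0)^K$ built into the bracket, together with a Rosenthal-type variance estimate, makes $\sup_w |R_K(w)|$ decay super-polynomially in $rK$; the chosen $K$ then gives $\sup|R_K| \le e^{-\delta^{-c_0}}$ with probability $1 - O(\delta^A)$. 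All five conditions hold verbatim, and with easier proofs, for the Gaussian $\tilde f_n$, so Theorem \ref{thm:universality} applies and yields the stated comparison with constants depending only on $\ep_0$, $C_{2+\ep_0}$, and the rate of convergence of $\sqrt{m}\,|c_{m,n}|$.
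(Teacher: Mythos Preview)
Your overall plan—verify the five Universality Conditions and invoke Theorem \ref{thm:universality}—is exactly the paper's strategy, and your treatment of Conditions \eqref{cond-delocal}, \eqref{cond-repulsion} and \eqref{cond-poly} matches the paper's. For Condition \eqref{cond-smallball} you diverge: the paper does not work at the single point $x_0$ but instead applies a cosine anti-concentration lemma (Lemma \ref{lmanti_concentration}) to $f_n(x_0e^{i\theta})$ with $N=\log L$ terms and selects a good $\theta$. Your Kolmogorov--Rogozin alternative at $x'=x_0$ does in fact go through (under Assumption~A one has a uniform bound $Q(\xi_m,s_0)\le 1-c$, and there are $\gg e^{(\log L)^{c_0}}\gg(\log L)^{2A}$ indices $m$ with $|c_{m,n}x_0^m|\ge \lambda/s_0$), so this is a genuine but harmless difference.

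The gap is in Condition \eqref{cond-bddn}. You have the right idea (Taylor expansion to a carefully chosen order) but the order you specify, ``a suitable power of $\log(1/\delta)$ divided by $r$'', gives $K\sim L(\log\log L)^p$, and with this $K$ neither of your two claims holds: a union bound over $K$ Chebyshev events would force $\alpha\lesssim 1/L$, so $\alpha^{-1/2}\gtrsim \sqrt{L}$ and the head is not $\delta^{-O(1)}$; and ``super-polynomial decay in $rK=(\log\log L)^p$'' is nowhere near the required $e^{-(\log L)^{c_0}}$ for the remainder. The paper's choice is $K=\lfloor\log L\rfloor=\lfloor\delta^{-1}\rfloor$. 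With this $K$: (i) there are only $\log L$ head terms, so a union bound costs a harmless factor $\log L$; the variance bound $\Var f_n^{(i)}(x_0)\ll(2i-1)!(4L)^{2i}$ together with $|z-x_0|\le r/16$ makes the $i$-th head term $\ll\alpha^{-1/2}2^{-i}$, summable in $i$; (ii) the remainder is handled not by a Rosenthal estimate but by the crude deterministic bound $\sup_{w\in B(x_0,r)}|f_n^{(K)}(w)|\le\alpha^{-1/2}K!(2L)^{K+1}$ on the high-probability event $\{|\xi_k|\le\sqrt{k}\,\alpha^{-1/2}\ \forall k\ge K\}$, after which the factor $(2/16)^{K}=8^{-\log L}$ swallows the stray $L$. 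This yields $\sup|f_n|\ll\alpha^{-1/2}$ with probability $1-O(\alpha\log L)$, and setting $\alpha=e^{-(\log L)^{c_0}}$ gives Condition \eqref{cond-bddn}.
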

By Theorem \ref{thm:universality}, in order to prove Lemma \ref{lm:reduction:01}, it suffices to show that there exist constants $\alpha_1, C_1$ such that Universality Conditions \eqref{cond-delocal}--\eqref{cond-poly} hold for all $x_0\in \I_n$ with the corresponding parameters  $r = \frac{1-x_0}{3}$ and $\delta = (\log (r^{-1}))^{-1}$. For notational convenience, we let 
$$L = r^{-1}.$$
We have $\delta = (\log L)^{-1}$ and $L$ goes to $\infty$ with $n$ for $x_0\in \I_n$. In particular, $L\ge \exp((\log n)^{1/5})$.
\subsubsection{Universality Condition \eqref{cond-delocal}} For all $z\in B(x_0, r)$, we have $|z|= 1-\frac{1}{\Theta(L)}$. So, $|c_i||z|^{i}\ll 1$ while 
 	 \begin{eqnarray}
 	 \sum _{m = 0}^{n}|c_{m, n}|^{2}|z|^{2m} \ge  \sum _{m = 0}^{L}|c_{m, n}|^{2}|z|^{2m} \gg \sum _{m = 0}^{L} |c_{m, n}|^{2} \gg \sum _{m = 0}^{L}\frac{1}{ m} \gg \log L.\label{eq:var:f}
 	 \end{eqnarray}
 	 Thus, Condition \eqref{cond-delocal} holds for $\alpha_1 = 1/2$.

  \subsubsection{Universality Condition \eqref{cond-repulsion}}
  
  For the first inequality, we observe that for all $y\in [x_0-r, x_0+r]$,
  \begin{eqnarray}
  \sum_{m=0}^{n} m^{2}|c_{m, n}|^{2}|y|^{2m-2} \ll \sum_{m=0}^{n} m \left ( 1 - \frac{1}{\Theta(L)}\right )^{m} \ll L^{2}\ll  L^{2}\sum _{m = 0}^{n}|c_{m, n}|^{2}|y|^{2m}
  \end{eqnarray}
  where we used \eqref{eq:var:f} for $y$ in place of $z$. 
  For the second inequality, we have
  \begin{equation}
  \sum_{m=0}^{n} \sup_{z\in B(y, r)} m^{4}|c_{m, n}|^{2}|z|^{2m-4}\ll \sum_{m=0}^{n}   m^{3}\left ( 1 - \frac{1}{\Theta(L)}\right )^{m}\ll L^{4}\ll L^{4} \sum_{m=0}^{n} |c_{m, n}|^{2}|y|^{2m}.\nonumber
  \end{equation}
  Thus, Condition \eqref{cond-repulsion} holds for any positive constant $c_0$.
  \subsubsection{Universality Condition \eqref{cond-smallball}} We shall prove that this condition holds for any $A>0$ and $c_0>0$.
  We shall use the following lemma.
  \begin{lemma}\label{lmanti_concentration} \cite[Lemma 9.2]{nguyenvurandomfunction17}
  	Let $\mathcal E$ be an index set of size $N\in \N$, and let $(\xi_j)_{j\in \mathcal E}$ be independent random variables with variance 1, and bounded $(2+\ep_0)$-moments uniformly bounded by some constant $C_{2+\ep_0}$. Let $(e_j)_{j\in \mathcal E}$ be deterministic coefficients with $|e_j|\ge \bar e$ for all $j$ and for some number $\bar e$. Then for any $B\ge 1$, any interval $I\subset\R$ of length at least $N^{-B}$, there exists an $x\in I$ such that
  	$$\sup_{Z\in \R}\P\left (\left| \sum_{j\in \mathcal E} e_j \xi_j \cos(jx)-Z\right |\le \bar e N^{-16B^{2}}\right )= O\left (N^{-B/2}\right )$$
  	where the implicit constant only depends on $B, \ep_0, C_{2+\ep_0}$.
  \end{lemma}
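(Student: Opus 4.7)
My plan is to combine an Esséen-type concentration inequality with a deterministic choice of $x \in I$ for which the variance of $S(x) := \sum_{j \in \mathcal{E}} e_j \xi_j \cos(jx)$ is large. Esséen's inequality gives, for any real-valued random variable $Y$ and $r > 0$,
$$\sup_{Z \in \R} \P\bigl(|Y - Z| \le r\bigr) \le C r \int_{-1/r}^{1/r} |\phi_Y(t)| \, dt,$$
where $\phi_Y(t) = \E[e^{itY}]$. Applied to $Y = S(x)$ with $r = \bar e N^{-16B^2}$, this reduces the problem to a pointwise bound on $\int |\phi_{S(x)}(t)| \, dt$ for a well-chosen $x$.

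The $(2 + \ep_0)$-moment condition together with unit variance yields, by the standard Taylor expansion of the characteristic function near the origin, a constant $c = c(\ep_0, C_{2+\ep_0}) > 0$ such that $|\phi_{\xi_j}(s)| \le \exp(-c \min(s^2, 1))$ for all real $s$ and every $j$. Independence of the $\xi_j$ then produces
$$|\phi_{S(x)}(t)| \le \exp\!\Bigl(-c \sum_{j \in \mathcal{E}} \min(t^2 e_j^2 \cos^2(jx),\, 1)\Bigr).$$
If I can locate $x \in I$ for which the trigonometric variance proxy
$$V(x) := \sum_{j \in \mathcal{E}} e_j^2 \cos^2(jx)$$
satisfies $V(x) \ge c_0 \bar e^2 N$ for some fixed $c_0 > 0$, then splitting the $t$-integral at $|t| = 1/\bar e$ shows that the low-frequency piece is $O(1/(\bar e \sqrt N))$ by a Gaussian estimate, while the high-frequency piece is exponentially small. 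Inserting this into Esséen with $r = \bar e N^{-16B^2}$ produces anti-concentration of order $N^{-16B^2 - 1/2}$, which comfortably beats the target $N^{-B/2}$ for every $B \ge 1$.

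The crux is therefore to exhibit $x \in I$ with $V(x) \gtrsim \bar e^2 N$. Using $V(x) = \tfrac12 \sum_j e_j^2 + \tfrac12 T(x)$ with $T(x) := \sum_j e_j^2 \cos(2jx)$, since $\tfrac12\sum_j e_j^2 \ge \tfrac12 \bar e^2 N$ automatically, it suffices to show that the set $\{x \in [0, 2\pi] : T(x) < -(1 - \eta)\sum_j e_j^2\}$ has Lebesgue measure strictly less than $|I| \ge N^{-B}$, for some fixed $\eta > 0$. Here I would combine Bernstein's inequality $\|T'\|_\infty \le 2N \sum_j e_j^2$, which limits how tightly the sub-level set can concentrate, with a two-scale frequency decomposition: the high-frequency block $\{j : j |I| \gg 1\}$ is genuinely oscillatory on $I$ and can be controlled by standard Weyl-type averaging, while the low-frequency block is nearly constant on $I$ and can be absorbed into the main $\tfrac12 \sum_j e_j^2$ contribution, provided its aggregate $\ell^2$-mass is not too large (which follows from $|e_j| \le $ max $E$ and the small number of such indices).

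\textbf{Main obstacle.} The principal difficulty is this level-set estimate when $|I|$ is as small as $N^{-B}$. Naive averaging of $T$ over $I$ fails because low-frequency terms $\cos(2jx)$ with $j \ll 1/|I|$ are essentially constant across $I$ and can conspire to drive $T$ close to $-\sum_j e_j^2$ throughout $I$. Making the two-scale argument quantitative enough that the measure of the bad set falls strictly below $N^{-B}$, uniformly in the choice of $I$ and of coefficients $(e_j)$, is where the genuine technical work lies; any loss in this step must still be absorbed by the very small target scale $\bar e N^{-16 B^2}$.
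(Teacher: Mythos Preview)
The paper does not prove this lemma; it is quoted verbatim from \cite{nguyenvurandomfunction17} and used as a black box. So there is no in-paper argument to compare against, and I evaluate your sketch on its own terms.

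The decisive gap is the characteristic-function bound. You claim that unit variance together with a uniform $(2+\ep_0)$-moment bound yields $|\phi_{\xi_j}(s)|\le\exp\bigl(-c\min(s^2,1)\bigr)$ for \emph{all} $s\in\R$. This is false: take $\xi_j$ Rademacher, which satisfies every hypothesis of the lemma, so that $\phi_{\xi_j}(s)=\cos s$ and $|\phi_{\xi_j}(2\pi k)|=1$ for every integer $k$. Taylor expansion only gives the quadratic decay on a fixed neighbourhood $|s|\le s_0$ of the origin; beyond that there is no control, because the lemma imposes no non-lattice assumption. Consequently your ``high-frequency piece'' of the Ess\'een integral---the range $1/\bar e\le |t|\le \bar e^{-1}N^{16B^2}$---need not be exponentially small: for lattice $\xi_j$ the integrand $|\phi_{S(x)}(t)|$ can return to $1$ repeatedly, the integral can be as large as the length of the integration range, and Ess\'een becomes vacuous.

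Your handling of the deterministic step is also problematic. The lemma allows an arbitrary index set $\mathcal E$ of size $N$ and an arbitrary interval $I$ of length $N^{-B}$. For $B>1$ it can happen that \emph{every} $j\in\mathcal E$ satisfies $j|I|\ll 1$, so there is no ``high-frequency block'' at all and your two-scale decomposition is empty; moreover no upper bound on $|e_j|$ is assumed, so you cannot argue that the low-frequency block has small $\ell^2$ mass. Concretely, if $\mathcal E$ consists of the odd integers up to $2N$ and $I$ is a tiny interval centred at $\pi/2$, then $\cos(jx)=O(N^{1-B})$ uniformly on $I$ and $V(x)$ is tiny throughout---the target $V(x)\gtrsim\bar e^2 N$ is unattainable. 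The lemma is still true in such situations, but for a different reason: the values $|e_j\cos(jx)|$ then spread across many geometrically separated scales, and one invokes an anti-concentration inequality of Kolmogorov--Rogozin or Littlewood--Offord type (in the spirit of the lacunary lemma quoted later in the paper) that exploits this multi-scale structure and is insensitive to whether the $\xi_j$ are lattice-valued. That mechanism is what the cited proof uses, and it is absent from your outline.
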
 
  
  Back to our proof of the Condition \eqref{cond-smallball}. It follows from a more general anti-concentration bound: there exists $\theta\in I := [-r/100,  r/100]$ such that 
  \begin{equation}
  \sup _{Z\in \C}\P\left (|f_n(x_0 e^{\sqrt{-1}\theta})-Z|\le \exp(-(\log L)^{c_0})\right )\ll (\log L)^{-A}.\nonumber
  \end{equation}
  
  Since the probability of being confined in a complex ball is bounded from above by the probability of its real part being confined in the corresponding interval on the real line, it suffices to show that
  \begin{equation}
  \sup _{Z\in \R}\P\left (\left |\sum_{j=0}^{n} c_{j, n} \xi_j x_0^{j} \cos{(j\theta)}-Z\right |\le \exp(-(\log L)^{c_0})\right )\ll (\log L)^{-A}.\nonumber
  \end{equation}
  
  For an $N\le n$ to be chosen, by conditioning on the random variables $(\xi_j)_{j\ge N}$, the above bound is deduced from 
  \begin{equation}
  \sup _{Z\in \R}\P\left (\left |\sum_{j=0}^{N-1} c_{j, n} \xi_j x_0^{j} \cos{(j\theta)}-Z\right |\le \exp(-(\log L)^{c_0})\right )\ll (\log L)^{-A}.\nonumber
  \end{equation}
  This is, in turn, a direct application of Lemma \ref{lmanti_concentration} with $B = 2A, \mathcal E = \{0, \dots, N-1\}, N = \log L$ and $\bar e = \min\{|c_{j, n}| x_0^j, j\le N-1\} \gg (\log L)^{-1/2}$ and the observation that
  \begin{equation}\label{key}
  \exp(-(\log L)^{c_0}) = o( (\log L)^{-16B^{2}-1/2}).\nonumber
  \end{equation}
  This verifies Condition \ref{cond-smallball}.
  
  \begin{remark}
  	We remark that by using the same argument as above with $N =  K\log L$, one can show that there exists a constant $C$ (depending only on $A, \ep_0, C_{2+\ep_0}, (a_m)$) such that for every $1\le K\le n/\log L$,  there exists $x'= x_0 e^{\sqrt{-1}\theta}$ where $\theta\in [-r/100, r/100]$ for which
  	\begin{equation}
  	\P\bigg (|f_n(x')|\le \exp\big(-(\log L)^{c_0} -C K\big)\bigg )\le \frac{C}{K^{A}(\log L)^{A}}.\label{smallball_kac}
  	\end{equation}
  	This shall be useful later.
  \end{remark}

  \subsubsection{ Universality Condition \eqref{cond-bddn}} \label{sec:cond:bddn:0}
We shall prove that condition \eqref{cond-bddn} holds for any $A>0$.
  \begin{lemma}\label{lm:bddn}
  	Universality Condition \eqref{cond-bddn} holds. Moreover, there exists a constant $C$ such that for all $x_0\in \I_n$ and all $\alpha\in (0, 1)$ that may depend on $n$, it holds that
  	\begin{equation}\label{eq:cond:bddn}
  	\P\left (\sup_{z\in B (x_0, r/16)} |f_n(z)|\le C\alpha^{-1/2}\right )\ge 1 - C\alpha \log L
  	\end{equation}
  \end{lemma}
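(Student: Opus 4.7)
The plan is to prove the quantitative bound \eqref{eq:cond:bddn} via a Taylor expansion of $f_n$ at the center $x_0$, then deduce Condition \eqref{cond-bddn} by choosing $\alpha$ appropriately. Throughout, set $L=r^{-1}$, so that $1-x_0=3r=3/L$; note that $x_0\ge 1/2$ for $x_0\in\I_n$ when $n$ is large.

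The key identity is the Taylor expansion
\begin{equation}
f_n(z)=\sum_{k=0}^n A_k (z-x_0)^k,\qquad A_k:=\frac{f_n^{(k)}(x_0)}{k!}=\sum_{m=k}^n \binom{m}{k}c_{m,n}\xi_m x_0^{m-k},\nonumber
\end{equation}
which gives $\sup_{z\in B(x_0,r/16)}|f_n(z)|\le \sum_{k\ge 0}|A_k|(r/16)^k$, so the task reduces to controlling this random series in $L^2(\P)$. Since the $\xi_m$ are independent with mean zero and variance one and $|c_{m,n}|^2\ll 1/m$, bounding $\binom{m}{k}^2\le m^{2k}/(k!)^2$ yields $\Var(A_k)\ll (k!)^{-2}x_0^{-2k}\sum_{m\ge k}m^{2k-1}x_0^{2m}$. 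The $k=0$ case gives $\Var(A_0)\ll \log((1-x_0^2)^{-1})\ll \log L$. For $k\ge 1$, the Eulerian-polynomial estimate $\sum_{m\ge 0}m^{2k-1}t^m\le (2k-1)!/(1-t)^{2k}$ with $t=x_0^2$, combined with $\binom{2k}{k}\le 4^k$, $x_0^{-2k}\le 4^k$, and $(1-x_0^2)^{-1}\le L/3$, produces
\begin{equation}
\sqrt{\Var(A_k)}\,(r/16)^k \ll \frac{1}{12^k\sqrt{k}}.\nonumber
\end{equation}

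Minkowski's inequality in $L^2(\P)$ applied to the random series therefore gives
\begin{equation}
\Big(\E\sup_{z\in B(x_0,r/16)}|f_n(z)|^2\Big)^{1/2}\le \sum_{k\ge 0}\sqrt{\Var(A_k)}(r/16)^k\ll \sqrt{\log L}+\sum_{k\ge 1}\frac{1}{12^k\sqrt{k}}\ll \sqrt{\log L}.\nonumber
\end{equation}
Chebyshev's inequality with threshold $C\alpha^{-1/2}$ for a sufficiently large absolute constant $C$ then gives $\P(\sup|f_n|>C\alpha^{-1/2})\le C\alpha\log L$, which is \eqref{eq:cond:bddn}. To deduce Condition \eqref{cond-bddn}, specialize $\alpha=\delta^{A+1}$ (recall $\delta=1/\log L$); then $\alpha\log L=\delta^A$ and $\alpha^{-1/2}=(\log L)^{(A+1)/2}$ is polynomial in $\log L$, hence dominated by $\exp(\delta^{-c_0})=\exp((\log L)^{c_0})$ for any fixed $c_0>0$ and large $L$, as required.

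The main obstacle is the variance estimate: one must track both the combinatorial blow-up $\binom{m}{k}^2$ and the exponential decay $x_0^{2m}$ sharply enough that $\sqrt{\Var(A_k)}(r/16)^k$ decays geometrically in $k$ with base strictly less than one. This localization of the $\sqrt{\log L}$ loss to the $k=0$ term is exactly what is needed to convert the mean-square bound into the stated $O(\alpha\log L)$ tail probability; without it, the naive bound $\sum_m|c_{m,n}||\xi_m|(x_0+r/16)^m\sim \sqrt{L}$ would be far too weak, as anticipated in Section \ref{sec:cond:bddn:0}. Once this geometric decay is in hand, the Minkowski-plus-Chebyshev finish and the final specialization of $\alpha$ are routine.
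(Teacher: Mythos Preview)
Your proof is correct. Both your argument and the paper's rest on Taylor expansion at $x_0$ combined with second-moment control of the Taylor coefficients, but the executions differ. The paper truncates the Taylor expansion at order $m=\lfloor\log L\rfloor$, bounds each of the first $m$ derivatives at $x_0$ individually by Chebyshev (Lemma~\ref{lm:net}), controls the remainder by a crude supremum bound on $f_n^{(m)}$ over the ball that uses the $(2+\ep_0)$-moment assumption to force $|\xi_k|\le\sqrt{k}\,\alpha^{-1/2}$ pointwise (Lemma~\ref{lm:m:derivative}), and then union-bounds over the $\sim\log L$ events, which is where the factor $\log L$ in the failure probability comes from. You instead keep the full polynomial Taylor series, observe that the scaled coefficients $\sqrt{\Var(A_k)}\,(r/16)^k$ decay geometrically (via the Eulerian-polynomial bound $\sum_m m^{2k-1}t^m\le (2k-1)!/(1-t)^{2k}$), apply Minkowski in $L^2$ once to get $\E\sup|f_n|^2\ll\log L$, and finish with a single Chebyshev; here the $\log L$ loss is localized to the $k=0$ term $\Var(A_0)\ll\log L$. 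Your route is shorter, avoids the truncation step and the auxiliary Lemmas~\ref{lm:net} and~\ref{lm:m:derivative}, and in fact only uses the variance-one hypothesis on the $\xi_m$ (not the $(2+\ep_0)$-moments) for this particular lemma. The paper's approach, on the other hand, yields pointwise control of individual derivatives, which can be of independent use.
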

  Before proving this lemma, we establish related bounds on the derivatives of $f_n$ at a single point $x_0$.
  \begin{lemma} \label{lm:net}
  	There exists a constant $C$ such that the following holds for any $\alpha$ in $(0, 1)$ and any integer $i\in [0, \log L]$: 
  	\begin{equation}\label{eq:lm:net}
  	\P\left (|f_n^{(i)}(x_0)|\le \alpha^{-1/2}\sqrt{(2i-1)!} (4L)^{i} \right )\ge 1 - C\alpha.
  	\end{equation}
  \end{lemma}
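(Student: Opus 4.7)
The plan is to prove \eqref{eq:lm:net} by a direct second-moment calculation followed by Chebyshev's inequality. Since $\E\xi_m=0$ and the $\xi_m$ are independent with unit variance, term-by-term differentiation gives
$$f_n^{(i)}(x_0)=\sum_{m=i}^n c_{m,n}\,\xi_m\,\frac{m!}{(m-i)!}\,x_0^{m-i},\qquad \E f_n^{(i)}(x_0)=0,$$
and hence
$$\Var f_n^{(i)}(x_0)=\sum_{m=i}^n c_{m,n}^{\,2}\left(\frac{m!}{(m-i)!}\right)^{\!2}x_0^{2(m-i)}.$$
So the task reduces to bounding this explicit sum.

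Using Assumption-B at $\rho=-1/2$ in the form $c_{m,n}^{\,2}\le(1+a_m)^2/m\le C/m$ together with the crude estimate $m!/(m-i)!\le m^{i}$, I would first bound
$$\Var f_n^{(i)}(x_0)\;\le\;C\,x_0^{-2i}\sum_{m=1}^{\infty}m^{2i-1}x_0^{2m}.$$
The inner sum is a standard polylogarithm: differentiating the identity $\sum_{m\ge 0}\binom{m+k}{k}z^{m}=(1-z)^{-(k+1)}$ (or comparing to a Gamma integral) yields
$$\sum_{m=1}^{\infty}m^{k}z^{m}\le \frac{k!}{(1-z)^{k+1}}\qquad\text{for }z\in(0,1),\ k\ge 1.$$
Applying this with $k=2i-1$ and $z=x_0^2$ gives $\sum m^{2i-1}x_0^{2m}\le (2i-1)!/(1-x_0^2)^{2i}$.

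Next, since $r=(1-x_0)/3$ and $L=r^{-1}$, one has $1-x_0^2\ge 1-x_0=3/L$, so $(1-x_0^2)^{-2i}\le(L/3)^{2i}$. The prefactor $x_0^{-2i}=(1-3/L)^{-2i}$ is controlled by $\exp(7i/L)$, which is $1+o(1)$ under the hypothesis $i\le\log L$ and the fact that $L\ge\exp((\log n)^{1/5})\to\infty$ on $\I_n$. Combining these estimates gives a constant $C_0$ (depending only on $\ep_0$, $C_{2+\ep_0}$, and the sequence $(a_m)$) such that
$$\Var f_n^{(i)}(x_0)\;\le\;C_0\,(2i-1)!\,(L/3)^{2i}.$$

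Finally, Chebyshev's inequality yields
$$\P\!\left(|f_n^{(i)}(x_0)|>\alpha^{-1/2}\sqrt{(2i-1)!}\,(4L)^{i}\right)\;\le\;\frac{\Var f_n^{(i)}(x_0)}{\alpha^{-1}(2i-1)!\,(4L)^{2i}}\;\le\;\frac{C_0\,\alpha}{12^{2i}}\;\le\;C_0\alpha,$$
which is \eqref{eq:lm:net} (the $i=0$ case, where $(2i-1)!$ is read as the empty product, is identical but uses \eqref{eq:var:f} directly in place of the polylogarithm bound). There is no serious obstacle: the $(4L)^{i}$ factor in the target comfortably dominates the $(L/3)^{i}$ growth of the standard deviation, and the only delicate point is making sure $x_0^{-2i}=O(1)$, which is exactly why the hypothesis $i\le\log L$ (combined with $L\to\infty$ on the core interval $\I_n$) enters.
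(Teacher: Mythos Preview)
Your proof is correct and follows essentially the same approach as the paper: bound $\Var f_n^{(i)}(x_0)$ by $C(2i-1)!L^{2i}$ and apply Chebyshev. Your variance estimate is in fact slightly cleaner than the paper's---where the paper splits the sum at $k=3i$ and uses the doubling trick $(k-1)\cdots(k-i+1)\le 2^i(k-i)\cdots(k-2i+2)$ to reduce to a closed-form derivative of $(1-y)^{-1}$, you bypass this with the direct inequality $\sum_{m\ge 1}m^{k}z^{m}\le k!/(1-z)^{k+1}$ (which follows from $m^k\le (m+1)\cdots(m+k)$ and the negative binomial series), and then absorb the harmless factor $x_0^{-2i}=1+o(1)$ using $i\le\log L$.
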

  
  We also need a crude bound on the $(\log L)$-th derivative to control the error term in the Taylor expansion of $f_n$ around $x_0$.
  \begin{lemma} \label{lm:m:derivative}
  	There exists a constant $C$ such that the following holds for any $\alpha$ in $(0, 1)$ and for $m = \lfloor \log L\rfloor$:
  	\begin{equation}\label{eq:lm:m}
  	\P\left (\sup_{w\in B(x_0, r)}|f_n^{(m)}(w)|\le  \alpha^{-1/2} m! (2L)^{m+1}\right )\ge 1 - C\alpha.
  	\end{equation}
  \end{lemma}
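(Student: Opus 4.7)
The idea is to bound $\sup_{w\in B(x_0,r)}|f_n^{(m)}(w)|$ via Cauchy's integral formula on a slightly larger contour, and then to control $\sup|f_n|$ on that contour by a direct second-moment computation. Take the contour $\gamma=\{z:|z-x_0|=2r\}$. Since $x_0+2r=(x_0+2)/3 \le 1-r$, $\gamma$ lies well inside the open unit disk, and for any $w\in B(x_0,r)$ and $z\in\gamma$ one has $|z-w|\ge r$. Cauchy's formula then yields
$$\sup_{w\in B(x_0,r)}|f_n^{(m)}(w)| \;\le\; \frac{m!}{2\pi}\cdot 4\pi r\cdot r^{-(m+1)}\sup_{z\in\gamma}|f_n(z)|\;=\;2\, m!\, L^{m}\,\sup_{z\in\gamma}|f_n(z)|.$$

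\textbf{Controlling the supremum on $\gamma$.} By the triangle inequality,
$$\sup_{z\in\gamma}|f_n(z)|\;\le\;S\;:=\;\sum_{k=0}^n|c_{k,n}|\,|\xi_k|\,(x_0+2r)^k.$$
Because $|c_{k,n}|=\Theta(k^{-1/2})$ and $x_0+2r\le 1-r$, the geometric weight $(1-r)^k$ truncates the sum effectively at $k\sim L$; the elementary estimate $\sum_{k\ge 0}(1-r)^k/\sqrt{k+1}=O(\sqrt L)$ gives $\E S=O(\sqrt L)$, and by independence together with $\Var(|\xi_k|)\le \E\xi_k^2=1$,
$$\Var(S)\le \sum_k|c_{k,n}|^2(1-r)^{2k}=O(\log L).$$
Therefore $\E S^2\le CL$, and Markov's inequality on $S^2$ yields $\P\!\bigl(S\ge C\alpha^{-1/2}\sqrt L\bigr)\le\alpha$ after adjusting $C$. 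On the complementary event, combining with the Cauchy estimate above,
$$\sup_{w\in B(x_0,r)}|f_n^{(m)}(w)|\;\le\;2\,m!\,L^m\cdot C\alpha^{-1/2}\sqrt L\;\le\;\alpha^{-1/2}\,m!\,(2L)^{m+1},$$
since $2C\le 2^{m+1}L^{1/2}$ holds for all large $L$ (and otherwise the claim is trivial after enlarging $C$).

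\textbf{Main obstacle.} The factor $L^m$ produced by Cauchy's formula already nearly saturates the target bound $(2L)^{m+1}$, so there is very little slack for estimating $\sup_\gamma|f_n|$. What makes the plan succeed is that, thanks to the exponential decay $(1-r)^k$, the second moment of $S$ grows only polynomially in $L$ (at level $L$), so the Markov step costs only a factor $L^{1/2}\alpha^{-1/2}$, which is comfortably absorbed into the extra room of $2^{m+1}L$ in the target bound. No cancellation among the $\xi_k$ beyond $\Var(|\xi_k|)\le 1$ is needed, so no Gaussian structure is invoked here.
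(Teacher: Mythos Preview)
Your proof is correct and takes a genuinely different route from the paper's.

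\textbf{What the paper does.} The paper works directly with the derivative: it considers the event $\{|\xi_k|\le \sqrt{k}\,\alpha^{-1/2}\text{ for all }k\ge m\}$, which has probability $\ge 1-O(\alpha)$ by the $(2+\ep_0)$-moment assumption (union bound with Markov on $|\xi_k|^{2+\ep_0}$). On that event it applies the triangle inequality termwise to $f_n^{(m)}(w)$, uses $|c_{k,n}|\sqrt{k}\ll 1$, and sums the resulting series as the $m$-th derivative of $(1-z)^{-1}$ at $z=1-\tfrac{1}{2L}$, which produces exactly $m!(2L)^{m+1}$.

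\textbf{What you do differently, and trade-offs.} You bypass the termwise derivative bound entirely by Cauchy's estimate on the circle of radius $2r$, reducing to a second-moment bound on $S=\sum_k|c_{k,n}||\xi_k|(1-r)^k$. This has two pleasant features: it uses only $\E\xi_k^2=1$ (no $(2+\ep_0)$-moment is needed for this lemma in isolation), and it avoids the somewhat delicate summation of $k(k-1)\cdots(k-m+1)(1-\tfrac{1}{2L})^{k-m}$. The cost is that Cauchy's formula already produces a factor $L^m$, so you must absorb the remaining $\sqrt{L}$ from $\E S^2=O(L)$ into the slack $2^{m+1}L$; this works precisely because $m=\lfloor\log L\rfloor$ makes $2^{m+1}$ a positive power of $L$. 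The paper's route is more self-contained (no contour integration) and gives the exact target constant without an asymptotic absorption step, but relies on the higher moment hypothesis that is available anyway.
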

  Deferring the proofs of these lemmas, we proceed to proving Lemma \ref{lm:bddn}.
  \begin{proof}[Proof of Lemma \ref{lm:bddn}]
  	For a parameter $\alpha\in (0, 1)$ to be chosen. Let $\mathcal A$ be the intersection of events in Equation \eqref{eq:lm:net} for all integers $i\in [0, \log L]$ together with the event in Equation \eqref{eq:lm:m}.
  	By the union bound, the event $\mathcal A$ has probability at least $1 - O(\alpha \log L)$. Under the event $\mathcal A$, we show that $|f_n(z)|\ll \alpha^{-1/2}$ for all $z\in B (x_0, r/16))$. 
  	
  	Indeed, let $m = \lfloor \log L\rfloor$, we apply $(m-1)$-th Taylor expansion of the function $f_n$ around $x_0$ and obtain   
  	\begin{eqnarray*}
  		|f_n(z)| &\le& \sum_{i=0}^{m-1} \frac{|f_n^{(i)}(x_0)| |z-x_0|^{i}}{i!} +  \frac{\sup_{y\in B(x_0, 1/(2L))} |f_n^{m}(y)||z-x_0|^{m}}{m!}\\
  		&\le& \sum_{i=0}^{m-1} \frac{\alpha^{-1/2} \sqrt{(2i-1)!} (4L)^{i} |z-x_0|^{i}}{i!} +  \frac{\alpha^{-1/2} m! (2L)^{m+1} |z-x_0|^{m}}{m!}\\
  		&\ll& \alpha^{-1/2}\sum_{i=0}^{m-1} (8|z-x_0| L)^{i} + \alpha^{-1/2} L(2|z-x_0| L)^{m}\ll \alpha^{-1/2}
  	\end{eqnarray*}
  	where in the last inequality, we used $|z-x_0|\le 1/(16L)$ for the first term and the choice $m=\lfloor \log L\rfloor$ to swallow the extra factor $L$ on the second term.
  	Thus, 
  	$$\P\left (\sup_{z\in B (x_0, r/8)} |f_n(z)|\le C\alpha^{-1/2}\right )\ge \P(\mathcal A)\ge 1 - C\alpha \log L$$
  	giving \eqref{eq:cond:bddn}. Setting $\alpha = \exp(-(\log L)^{c_0})$, we get Condition \eqref{cond-bddn}.
  \end{proof}
  
  \begin{proof} [Proof of Lemma \ref{lm:net}]
  	By Chebyshev inequality, it suffices to show that the variance of $f_n^{(i)}(x_0)$ is bounded by $(2i-1)! (4L)^{2i}$ (recall that $\E f_n^{(i)}(x_0)=0$). Indeed, we have
  	\begin{eqnarray}
  	f_n^{(i)}(x_0) = \sum_{k=i}^{n} k (k-1)\dots (k-i+1) c_{k, n} \xi_k x_0^{k-i}.\nonumber
  	\end{eqnarray}
  	Thus, 
  	\begin{eqnarray}
  	&&	\Var f_n^{(i)}(x_0) = \sum_{k=i}^{n} k^{2} (k-1)^{2}\dots (k-i+1)^{2} c_{k, n}^{2} x_0^{2k-2i}\nonumber\\
  	&\ll &\sum_{k=i}^{n} k (k-1)^{2}\dots (k-i+1)^{2}  x_0^{2k-2i} = \sum_{k=i}^{3i-1} + \sum_{k=3i}^{n} =: S_1+S_2\nonumber.
  	\end{eqnarray}
  	For the main term $S_2$, we note that for all $\ell \ge 2i$, we have $\ell\le 2(\ell -i+1)$. Thus, for all $k\ge 3i$, we have
  	$$(k-1) \dots (k-i+1) \le 2^{i} (k-i)\dots (k-2i+2).$$
  	So, 
  	\begin{eqnarray*}
  		S_2 &\le& 2^{i}\sum_{k=3i}^{n} k (k-1) \dots (k-2i+2)   x_0^{2k-2i} \ll 2^{i}\sum_{k=0}^{\infty} k (k-1) \dots (k-2i+2)   y^{k-2i+1}\\
  		&&\quad\text{(where $y = x_0^{2}\le 1 - \frac{1}{2L}$)}\\
  		&=& 2^{i}\left (\sum_{k=0}^{\infty} y^{k}\right )^{(2i-1)} = 2^{i} \left (\frac{1}{1-y}\right )^{(2i-1)} =  \frac{2^{i} (2i-1)!}{(1-y)^{2i}}\ll   (2i-1)! (4L)^{2i}.
  	\end{eqnarray*}

  	For $S_1$, we have the crude bound
  	\begin{eqnarray}
  	S_1\le \sum_{k=i}^{3i-1} (3i)^{2i-1} \le (3i)^{2i}\ll  (2i-1)! (4L)^{2i}.\nonumber 
  	\end{eqnarray}
  	So, $\Var  f_n^{(i)}(x_0)\ll   (2i-1)! (4L)^{2i}$. This completes the proof by applying Chebyshev inequality.
  \end{proof}
  
  \begin{proof} [Proof of Lemma \ref{lm:m:derivative}]
  	Consider the event that $|\xi_k|\le \sqrt k \alpha^{-1/2}$ for all $k\in [m, n]$. By the assumption that $\xi_k$ has uniformly bounded $(2+\ep_0)$-moments, this event holds with probability at least  
  	$$ 1-O(1)\sum_{k=m}^{n}  k^{-(2+\ep_0)/2}\alpha^{(2+\ep_0)/2}  =1-  O(\alpha).$$
  	%	where we recall the values of $m$ and $\alpha$ in \eqref{eq:parameters}.
  	
  	Under this event, we have 
  	\begin{eqnarray*}
  		\sup_{w\in B(x_0, r)}|f_n^{(m)}(w)| &\le& \sum_{k=m}^{n} k (k-1) \dots (k-m+1) |c_{k, n}| |\xi_k| \left (1-  \frac{1}{2L}\right )^{k-m}\\
  		&\ll& \alpha^{-1/2}\sum_{k=m}^{n} k (k-1) \dots (k-m+1) \left (1-  \frac{1}{2L}\right )^{k-m}\\
  		&\le& \alpha^{-1/2} \left (\frac{1}{1-z}\right )^{(m)}\bigg\vert _{z = 1 - 1/(2L)} =  \alpha^{-1/2}  m! (2L)^{m+1}
  	\end{eqnarray*}
  	as claimed.
  \end{proof}

  \subsubsection{Universality Condition \eqref{cond-poly}} We show that this condition holds with $C_1 = 1$.
  For this proof, we shall need the classical Jensen's bound for the number of roots of analytic functions. Assume that $f$ is an analytic function on an open domain that contains a closed ball $B(z, R)$. Then for any $R'<R$, we have
  \begin{equation}
  N_f(B(z, R'))\le  \frac{\log \frac{M}{M'}}{\log\frac{R^{2}+R'^{2}}{2RR'}}\label{jensenbound}
  \end{equation}
  where $M = \max_{w\in B(z, R)} |f(w)|$ and $M' = \max_{w\in B(z, R')} |f(w)|$. For a proof, we refer to \cite[Appendix 15.5]{nguyenvurandomfunction17}.

  	Applying Jensen's bound \eqref{jensenbound} to $f_n$ with $R = r/16$ and $R' = r/20$, we obtain
  	\begin{equation}
  	N\le \frac{\log \frac{M}{M'}}{\log\frac{R^{2}+R'^{2}}{2RR'}} \ll \log \frac{M}{M'}.
  	\end{equation}
  	where $N = N_{f_n}(B(x_0, R'))$, $M = \max_{w\in B(x_0, R)} |f_n(w)|$ and $M' = \max_{w\in B(x_0, R')} |f_n(w)|$.
  	From \eqref{smallball_kac}, for all $1\le K\le n/\log L$, we have
  	\begin{equation}
  	\P\bigg (M' \le \exp\big(-(\log L)^{c_0} -C K\big)\bigg )\ll \frac{1}{K^{A}(\log L)^{A}}.\nonumber
  	\end{equation}
  	From \eqref{eq:cond:bddn}, we have for all $\alpha\in (0, 1)$,
  	\begin{equation}
  	\P\left (M\le C\alpha^{-1/2}\right )\ge 1 - C\alpha \log L.\nonumber
  	\end{equation}
  	Hence, for all $1\le K\le n/\log L$,
  	\begin{equation}
  	\P\bigg (M \ge \exp\big((\log L)^{c_0} +C K\big)\bigg )\ll \frac{1}{K^{A}(\log L)^{A}}.\nonumber
  	\end{equation}
  	Combining these inequalities, we get for every $1\le K\le n/\log L$ that
  	\begin{equation}\label{eq:bound N}
  	\P(N\gg (\log L)^{c_0} + CK) \ll \frac{1}{K^{A}(\log L)^{A}}.
  	\end{equation}
  	
  	Thus, let $k_0$ be the largest integer such that $e^{k_0} (\log L)^{c_0}\le n/\log L$, we have
  	\begin{eqnarray*}
  		\E N^{6} &\le& (\log L)^{6c_0} + \sum_{k=0}^{k_0} \E N^{6} \textbf{1}_{e^{k}(\log L)^{c_0}\le N\le e^{k+1} (\log L)^{c_0}} + \E N^{6} \textbf{1}_{N\ge e^{k_0+1} (\log L)^{c_0}}\\
  		&\ll& (\log L)^{6c_0} + \sum_{k=0}^{k_0}  e^{6k} (\log L)^{6c_0} e^{-Ak}  (\log L)^{-A} + n^{6} e^{-Ak_0}  (\log L)^{-A(1+c_0)} \text{ by \eqref{eq:bound N}}\\
  		&\ll& (\log L)^{6c_0} 
  	\end{eqnarray*}
  	where in the second line, we used $N\le n$ a.s. So, by H\"{o}lder's inequality and another application of \eqref{eq:bound N},
  	\begin{eqnarray*}
  		\E N^{3}\textbf{1}_{N\ge \log L} &\le& \left ( \E N^{6}\right )^{1/2} \P(N\ge  \log L )^{1/2}\ll (\log L)^{3c_0}   (\log L)^{-A/2}\ll 1.
  	\end{eqnarray*}
  	This proves Condition \eqref{cond-poly}  and hence completes the proof of Lemma \ref{lm:reduction:01}.

  \subsubsection{Proof of Lemma \ref{lm:-1/2:01}}\label{sec:proof:-1/2:01}
  Having proved Lemma \ref{lm:reduction:01} that reduces to the Gaussian case for smooth test functions $G$, we are now ready to pass this result to the number of real roots and prove Lemma \ref{lm:-1/2:01}.  We shall show that there exists a positive constant $\alpha$ such that for every $x_0\in \I_n$, 
  \begin{equation}\label{eq:uni:ex}
  \E N_{f_n}\left (a, b\right ) - \E  N_{\tilde f_n}\left (a, b\right ) \ll \left (\log \frac{1}{1-x_0}\right )^{-\alpha}
  \end{equation}
where $a = x_0-\frac{1-x_0}{10^6}$ and $b=x_0+\frac{1-x_0}{10^6}$.
In fact, we shall see from the proof that this inequality also holds if on the LHS, one replaces the interval $\left (a, b\right ) $ by any of its subintervals. Assuming this and decomposing $\I_n$ into such intervals, we obtain
  \begin{equation}\label{eq:uni:In:f}
  \E N_{f_n}(\I_n)- \E N_{\tilde f_n}(\I_n) \ll \sum_{k=(\log n)^{1/5}}^{C \log n } k^{-\alpha} \ll (\log n)^{1-  \alpha} = o(\log n).
  \end{equation}
  In Lemma \ref{lm:main interval}, we showed that the number of roots in $[0, 1]\setminus \I_n$ of $f_n$ (and also $\tilde f_n$ as it is just a special case of $f_n$) is $o(\log n)$. Combining this with \eqref{eq:uni:In:f} yields
  \begin{equation} 
  \E N_{f_n}[0, 1]- \E N_{\tilde f_n}[0, 1]  = o(\log n).\nonumber
  \end{equation}
  In Section \ref{sec:01:Gau}, we shall show that 
  \begin{equation}\label{eq:Gauss:01}
  \E N_{\tilde f_n}[0, 1]  = o(\log n)
  \end{equation}
  and so this completes the proof of Lemma \ref{lm:-1/2:01}.

   \begin{proof}[Proof of \eqref{eq:uni:ex}]
  	As before, let $L = 3/(1-x_0)$. Let $G$ be a smooth function on $\R$ with support in   $\left [a-L^{-1}(\log L)^{-\alpha}, b+L^{-1}(\log L)^{-\alpha}\right] $ such that $0\le G\le 1$, $G = 1$ on $\left [a, b\right ]$, and $||G^{(k)}|| _\infty\ll L^{k} (\log L)^{6\alpha}$ for all $0\le k \le 6$ where $\alpha = c/7$ with $c$ being the constant in Lemma \ref{lm:reduction:01}.
  	
  	By the definition of $G$, we have  
  	\begin{equation}
  	\E N_{{f_n}}{(a, b)}\le \E \sum_{\zeta:\ f_n(\zeta)=0} G(\zeta ) \le \E N_{{f_n}}{(a-L^{-1}(\log L)^{-\alpha}, b+L^{-1}(\log L)^{-\alpha})}\nonumber.
  	\end{equation}
   	Similarly, 
  	\begin{equation}
  	\E N_{{\tilde f_n}}{(a, b)}\le \E \sum_{\tilde \zeta:\ \tilde f_n(\tilde \zeta)=0} G(\tilde \zeta) \le \E N_{{\tilde f_n}}{(a-L^{-1}(\log L)^{-\alpha}, b+L^{-1}(\log L)^{-\alpha})}\nonumber.
  	\end{equation}
  	Applying Lemma \ref{lm:reduction:01} to the function $G/(\log L)^{6\alpha}$, we get
  	\begin{eqnarray}
  	\E \sum_{\zeta:\ f_n(\zeta)=0} G(\zeta)&=&  \E \sum_{\tilde \zeta:\ \tilde f_n(\tilde \zeta)=0} G(\tilde \zeta)+  O\left ((\log L)^{-c+6\alpha}\right )= \E \sum _{\tilde \zeta:\ \tilde f_n(\tilde \zeta)=0} G(\tilde \zeta)+  O\left ((\log L)^{-\alpha}\right ) \nonumber.
  	\end{eqnarray}
  	
  	Thus,	\begin{eqnarray}
  	\E N_{{f_n}}{(a, b)}&\le& \E N_{{\tilde f_n}}{(a-L^{-1}(\log L)^{-\alpha}, b+L^{-1}(\log L)^{-\alpha})}+ O((\log L)^{-\alpha} ) \nonumber\\
  	&\le& \E N_{\tilde f_n}{(a, b)}+ 2\mathcal N_{\tilde f_n} + O((\log L)^{-\alpha} ) \nonumber,
  	\end{eqnarray}
  	where 
  	$\mathcal N_{\tilde f_n} := \sup_{x=1-\Theta(1/L)}\E N_{\tilde f_n} (x - L^{-1}(\log L)^{-\alpha}, x)$. We will show later that 
  	\begin{equation}\label{eq:CN}
  	\mathcal N_{\tilde f_n}\ll (\log L)^{-\alpha}
  	\end{equation}
  	which gives the upper bound $\E N_{{f_n}}{(a, b)}\le \E N_{\tilde f_n}{(a, b)}+ O ((\log L)^{-\alpha})$.
  	
  	By the same argument with $G$ being supported on the inner interval 
  	$$\left [a+L^{-1}(\log L)^{-\alpha}, b-L^{-1}(\log L)^{-\alpha}\right],$$ 
  	we obtain the lower bound $\E N_{{f_n}}{(a, b)}\ge \E N_{\tilde f_n}{(a, b)}-O((\log L)^{-\alpha}$ which completes the proof of \eqref{eq:uni:ex}.
  	
  	To prove \eqref{eq:CN}, we use  the Kac-Rice formula (here we use \cite[Formula 3.12]{Farahmand}), which asserts that for  every $x$, 
  	\begin{eqnarray} 
  	\E N_{\tilde f_n}[x - L^{-1}(\log L)^{-\alpha},x] &=&  \frac{1}{\pi} \int_{x - L^{-1}(\log L)^{-\alpha}}^{x} \frac{\sqrt{PQ-R^2}}{P} dx, \nonumber
  	\end{eqnarray}
  	where
  	\begin{equation}\label{key}
  	P = \sum_{i=0}^{n} c_{i, n}^2 x^{2i},\quad Q = \sum_{i=1}^{n} i^2 c_{i, n}^2 x^{2i-2},\quad \text{and}\quad  R = \sum_{i=1}^{n} i c_{i, n}^2 x^{2i-1}. \nonumber
  	\end{equation}
  	For $x = 1-\Theta(1/L)$, we have $P\ge c_{i, n}^2 x^{2i}\gg 1$ for any $i=O(1)$ with $c_{i, n}\gg 1$. Also, 
  	$$Q\ll \sum_{i=1}^{n} i^2 i^{-1} x^{2i-2} = \sum_{i=1}^{\infty} i x^{2i-2} \ll L^{2}.$$
  	 So, 
  	\begin{eqnarray} 
  	\E N_{\tilde f_n}[x - L^{-1}(\log L)^{-\alpha},x] &\ll& \int \sqrt\frac{{Q}}{P} dx\ll \int \sqrt Q dx \ll \int L  \ll (\log L)^{-\alpha}\nonumber
  	\end{eqnarray}
  	as claimed.
  \end{proof}

\subsection{Roots inside $(1, \infty)$} 
The goal of this section is to prove Lemma \ref{lm:allrho:1} which is to show that
\begin{equation}
\E N_n(1,\infty) = \frac{1+o(1)}{2\pi}  \log n, \quad \text{for all $\rho$}\nonumber.
\end{equation}
We first use the transformation
\be
g_n(x) = \frac{x^n}{c_n} f_n(1/x)\nonumber
\end{equation}
that converts the zeros of $f_n$ in $(1, \infty)$ to the zeros of $g_n$ in $(0,1)$.
Note that 
\be
g_n(x) = \sum_{m=0}^n d_m \xi'_m x^m,\nonumber
\end{equation}
where $\xi'_m:=\xi_{n-m}$ and 
$$d_m = \frac{c_{n-m}}{c_n} =  \left (\frac{n-m}{n}\right ) ^{\rho}(1+o_{n-m}(1))$$
where the $o_{n-m}(1)$ means that it goes to $0$ as $n-m\to \infty$. Observe that  
\begin{equation}\label{eq:dm}
d_m = 1+o(1) \quad \text{for $m=o(n)$}.
\end{equation}
Even though this asymptotics only holds for $m=o(n)$, it actually governs the behavior of $g_n$ in the core interval $\I_n$ because for $x\in \I_n$ and for large $m$, $x^{m}$ is tiny and does not contribute much to the sum. In other words, on $\I_n$, $g_n$ would behave like the Kac polynomial, and in particular, would have universality properties.
With this observation, the rest of the proof will be similar (and simpler) to what we have done for $(0, 1)$ in the previous section. We include it here for completeness.
Let
$$\tilde g_n = \sum_{m=0}^{n} d_m \tilde \xi_m x^{m}$$
with $\tilde \xi_m$ being iid standard Gaussian.
The goal of this subsection is to show the following analog of Lemma \ref{lm:reduction:01}.
\begin{lemma} [Reduction to Gaussian]\label{lm:reduce:gau:2} There exist positive constants $C, c'$ depending only on $\ep_0$, $C_{2+\ep_0}$ and the rate of convergence of $ |c_{m, n}|/m^{\rho}$   such that for every $x_0\in \I_n$, let 
	$r = \frac{1-x_0}{3}$ and $\delta = r$ and for every function $G: \R \to \R$ supported on $ [x_0-10^{-5}r, x_0+10^{-5}r] $ with continuous derivatives up to order $6$ and $||G^{(a)}||_{\infty}\le r^a$ for all $0\le a\le 6$, we have
	\begin{eqnarray} 
	\left |\E\sum_{g_n(\zeta) =0} G\left (\zeta  \right) 
	-\E\sum_{\tilde g_n(\tilde \zeta) =0} G\left (\tilde \zeta  \right) \right |\ll \delta^{c}\nonumber
	\end{eqnarray}
\end{lemma}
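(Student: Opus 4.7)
The plan is to apply the comparison theorem (Theorem~\ref{thm:universality}) to $g_n$ and $\tilde g_n$ with the prescribed parameters $x_0 \in \I_n$, $r = (1-x_0)/3$, $\delta = r$. So I need to verify the five Universality Conditions for both $g_n$ and $\tilde g_n$, uniformly in $x_0$, for some fixed $\alpha_1, C_1$. The key observation guiding the verification is that on $B(x_0, r)$ only indices $m \lesssim L := 1/r$ contribute nontrivially to $g_n$, because for larger $m$ the weight $|z|^m$ decays exponentially and $L \ll n$ forces $d_m = 1 + o(1)$ in this regime by \eqref{eq:dm}. In other words, $g_n$ essentially looks like a standard Kac polynomial on the scales relevant to the lemma, which is exactly the regime handled in Lemma~\ref{lm:reduction:01}. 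The crucial structural difference with that proof is that here $\delta = r$ is a polynomial rather than logarithmic function of $L$, so bounds of the form $\exp(\pm \delta^{-c_0}) = \exp(\pm L^{c_0})$ demanded by Conditions~\eqref{cond-smallball} and \eqref{cond-bddn} are drastically easier to achieve.

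For Condition~\eqref{cond-delocal}, using $|d_j| \gg 1$ for $j \le L$ I obtain the denominator $\sum_j |d_j|^2|z|^{2j} \gg L$, while the numerator satisfies $|d_m||z|^m \ll 1$, so the ratio is $\ll L^{-1/2} = \delta^{1/2}$, giving $\alpha_1 = 1/2$. Condition~\eqref{cond-repulsion} follows from the elementary estimates $\sum m^2 |y|^{2m-2} \ll L^3$ and $\sum m^4 |z|^{2m-4} \ll L^5$, which beat $r^{-2}$ and $r^{-4}$ respectively times the denominator for any $c_0 > 0$. For Condition~\eqref{cond-smallball}, I follow the argument used for $f_n$: parametrize $x' = x_0 e^{i\theta}$ with $\theta \in [-r/100, r/100]$, truncate to the first $N \sim \log L$ terms, and invoke Lemma~\ref{lmanti_concentration} with $\bar e \gg 1$. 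The resulting threshold $\ll L^{-16B^2}$ is enormously larger than the demanded $\exp(-L^{c_0})$, so the condition holds with $A$ arbitrary. Condition~\eqref{cond-bddn} is handled by a simpler version of the Taylor expansion argument of Lemma~\ref{lm:bddn}: Chebyshev-type estimates on $g_n^{(i)}(x_0)$ (now with $\Var g_n^{(i)}(x_0) \ll (2i-1)!(4L)^{2i}$, proved exactly as in Lemma~\ref{lm:net} but with coefficients $d_m \asymp 1$) combined with the moment bound of Lemma~\ref{lm:m:derivative} give $\sup_{B(x_0, r/16)}|g_n| \ll \alpha^{-1/2}$ with probability $1 - O(\alpha \log L)$, which trivially beats $\exp(L^{c_0})$.

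Finally, Condition~\eqref{cond-poly} with $C_1 = 1$ follows from Jensen's inequality~\eqref{jensenbound}, which gives $N := N_{g_n}(B(x_0, r/20)) \ll \log(M/M')$ where $M, M'$ are the sup of $|g_n|$ on the larger and smaller balls. Combining the Condition~\eqref{cond-smallball} lower bound for $M'$ and the Condition~\eqref{cond-bddn} upper bound for $M$ (in their quantitative forms analogous to \eqref{smallball_kac} and \eqref{eq:cond:bddn}) yields, for $1 \le K \le n/\log L$,
\[
\P\bigl(N \gg (\log L)^{c_0} + K\bigr) \ll K^{-A}(\log L)^{-A},
\]
exactly as in \eqref{eq:bound N}. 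Integrating this tail as in Section~3 of the paper bounds $\E N^6 \ll 1$ and hence $\E N^3 \mathbf{1}_{N \ge \delta^{-1}} = \E N^3 \mathbf{1}_{N \ge L} \ll 1$ by H\"older. The same verification applies verbatim to $\tilde g_n$ (Gaussian $\tilde\xi_m$ satisfy Assumption-A), so Theorem~\ref{thm:universality} delivers the claimed comparison. There is no serious obstacle here since the analytic estimates are actually easier than those in Lemma~\ref{lm:reduction:01}; the only thing to watch is making all constants in the Kac-like estimates for $g_n$ uniform in $x_0 \in \I_n$, which is automatic from $d_m = 1 + o(1)$ on the range $m \lesssim L$.
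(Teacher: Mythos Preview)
Your overall strategy is correct and matches the paper's: verify the five Universality Conditions for $g_n,\tilde g_n$ with $\delta=r=1/L$ and invoke Theorem~\ref{thm:universality}. However, there is a real slip in your verification of Condition~\eqref{cond-smallball}. You truncate to the first $N\sim\log L$ terms and apply Lemma~\ref{lmanti_concentration}; that lemma yields a failure probability of order $N^{-B/2}\asymp(\log L)^{-A}$. But here $\delta=r=L^{-1}$, so Condition~\eqref{cond-smallball} demands failure probability $\ll\delta^A=L^{-A}$, which $(\log L)^{-A}$ does not meet. (Your sentence ``the condition holds with $A$ arbitrary'' conflates the smallness of the value threshold with the smallness of the probability.) The paper fixes this simply by taking $N=L$: then $\bar e=\min_{j<L}|d_j|x_0^j\gg 1$, the interval $[-r/100,r/100]$ has length $\gg L^{-B}$, and Lemma~\ref{lmanti_concentration} gives probability $O(L^{-A})$ as required; the enhanced version~\eqref{smallball_kac:1} likewise has $L^{-A}$ rather than $(\log L)^{-A}$.

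Two smaller points. First, in Condition~\eqref{cond-delocal} you assert $|d_m||z|^m\ll 1$ without comment, but for $m$ close to $n$ one has $|d_m|=|c_{n-m}/c_n|$ of order $n^{|\rho|}$, so the bound is not automatic. The paper handles this by introducing a cutoff $n_0=2n/\exp(\tfrac12(\log n)^{1/5})$ with $L=o(n_0)=o(n)$: for $m\le n_0$ one has $d_m=1+o(1)$, while for $m>n_0$ the factor $|z|^{n_0}$ (with $|z|\le 1-\exp((\log n)^{1/5})/(2n)$) crushes the $n^{|\rho|}$. Second, for Condition~\eqref{cond-bddn} the paper does not repeat the Taylor-expansion machinery of Lemma~\ref{lm:bddn}; it uses the much cruder bound that on the event $\{|\xi'_k|\le\sqrt{k}\,\alpha^{-1/2}\ \forall k\}$ (probability $\ge 1-O(\alpha)$) one has $\sup_{B(x_0,r/8)}|g_n|\ll\alpha^{-1/2}L^{3/2}$, which already sits comfortably below $\exp(L^{c_0})$. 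Your Taylor approach would also work, but it is more than is needed here.
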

For the proof, for each $x_0$, we let $r$ and $\delta$ as above. We let  $L = r^{-1}$ and will show that there exist constants $\alpha_1, C_1$ such that Universality Conditions \eqref{cond-delocal}--\eqref{cond-poly} hold for all $x_0\in \I_n$ with the corresponding parameters  $r = \frac{1-x_0}{3}$ and $\delta = r$.
\subsubsection{Universality Conditions \eqref{cond-delocal} and \eqref{cond-repulsion}} Since $x_0\in \I_n$, for all $z\in B(x_0, r)$, we have 
$$|z|= 1-\frac{1}{\Theta(L)}, \quad |z|\le 1 - \frac{\exp((\log n) ^{1/5})}{2n}.$$ 
Let $n_0 = \frac{2n}{\exp((\log n) ^{1/5}/2)}$. We have $L = o(n_0), n_0= o(n)$.  From \eqref{eq:dm},
$$|d_i z^{i}|\ll \begin{cases}
1\quad\text{ if $i\le n_0$,}\\
n^{|\rho|}|z|^{n_0}\ll 1 \quad\text{ if $i > n_0$,}
\end{cases}$$
while 
\begin{eqnarray}
\sum _{j = 0}^{n}|d_j|^{2}|z|^{2j} \ge  \sum _{j = 0}^{L}|d_j|^{2}|z|^{2j} \gg \sum _{j = 0}^{L} 1 \gg  L. \nonumber
\end{eqnarray}
Thus, Condition \eqref{cond-delocal} holds for $\alpha_1 = 1/2$.
Similarly, one has Universality Condition \eqref{cond-repulsion} for any constant $c_0>0$.

\subsubsection{Universality Condition \eqref{cond-smallball} for any constants $A, c_0>0$}
The bound \eqref{cond-smallball} follows from a more general anti-concentration bound: there exists $\theta\in I := [-r/100,  r/100]$ such that 
\begin{equation}
\sup _{Z\in \C}\P\left (|g_n(x_0 e^{i\theta})-Z|\le \exp(-L^{c_0})\right )\ll L^{-A}.\nonumber
\end{equation}
As before, we get this by applying Lemma \ref{lmanti_concentration} with $B = 2A, \mathcal E = \{0, \dots, N-1\}, N = L$ and $\bar e = \min\{|d_j| x_0^j, j\le N-1\} \gg 1$.

Moreover, using the same argument with $N =  K\log L$, we obtain a constant $C$ such that for every $1\le K\le \sqrt {n}/\log L$,  there exists $x'= x_0 e^{i\theta}$ where $\theta\in [-r/100, r/100]$ for which
	\begin{equation}
	\P\bigg (|g_n(x')|\le \exp\big(-L^{c_0} -C K\big)\bigg )\ll \frac{1}{K^{A}L^{A}}.\label{smallball_kac:1}
	\end{equation}

\subsubsection{ Universality Condition \eqref{cond-bddn} for any constants $A, c_0>0$} We show the stronger statement that there exists a constant $C$ such that for all $x_0\in \I_n$ and all $\alpha\in (0, 1)$ which may depend on $n$, it holds that
	\begin{equation}\label{eq:cond:bddn:1}
	\P\left (\sup_{z\in B (x_0, r/8)} |g_n(z)|\le C\alpha^{-1/2} L^{3/2}\right )\ge 1 - C\alpha.
	\end{equation}
  Indeed, consider the event that $|\xi'_k|\le \sqrt k \alpha^{-1/2}$ for all $k\le n$ which, by Markov's inequality using the $(2+\ep_0)$-moments of $\xi'_k$, holds with probability at least $1 - O(\alpha)$.
	
	Under this event, we have 
	\begin{eqnarray*}
		\sup_{w\in B(x_0, 1/(2L))}|g_n (w)| &\le& \sum_{k=0}^{n}  |d_k| |\xi'_k| \left (1-  \frac{1}{\Theta(L)}\right )^{k}\\
		&\ll& \alpha^{-1/2}\sum_{k=0}^{n}  \sqrt k |d_k|  \left (1-  \frac{1}{\Theta(L)}\right )^{k}\ll   \alpha^{-1/2} L^{3/2}.
	\end{eqnarray*}

\subsubsection{Universality Condition \eqref{cond-poly}} This condition holds with $C_1 = 1$ by the exact same argument as before, using Jensen's bound, \eqref{smallball_kac:1} and \eqref{eq:cond:bddn:1} together with the fact that $N_{g_n}(B(x_0, r))\le n$ a.s.

\subsubsection{Proof of Lemma \ref{lm:allrho:1}}\label{sec:proof:-1/2:01}
This proof is similar to the proof of Lemma \ref{lm:-1/2:01}. Having proved Lemma \ref{lm:reduce:gau:2} that reduces to the Gaussian case for smooth test functions $G$, we pass to the case that $G$ is an indicator function: there exists a positive constant $\alpha$ such that for every $x_0\in \I_n$, 
\begin{equation}\label{eq:uni:ex:1}
\E N_{g_n}\left (x_0-\frac{1-x_0}{10^6}, x_0+\frac{1-x_0}{10^6}\right ) - \E  N_{\tilde g_n}\left (x_0-\frac{1-x_0}{10^6}, x_0+\frac{1-x_0}{10^6}\right ) \ll (1-x_0)^{\alpha}
\end{equation}
Assuming this and decomposing $\I_n$ into such intervals, we obtain
\begin{equation}\label{eq:uni:In:f:1}
\E N_{g_n}(\I_n)- \E N_{\tilde g_n}(\I_n) \ll \sum_{k=\Theta(\log n)^{1/5}}^{C\log n} (3/5)^{\alpha k} =o(1).
\end{equation}

In Lemma \ref{lm:main interval}, we showed that the number of roots in $(0, 1)\setminus \I_n$ of $g_n$ (and also $\tilde g_n$ as it is just a special case of $g_n$) is $o(\log n)$. Combining this with \eqref{eq:uni:In:f:1} yields
\begin{equation} 
\E N_{g_n}(0, 1)- \E N_{\tilde g_n}(0, 1)  = o(\log n).\nonumber
\end{equation}
In Section \ref{sec:gau:1inf}, we shall show that 
\begin{equation}\label{eq:gn:N}
\E N_{\tilde g_n}(0, 1)  =  \frac{1+o(1)}{2\pi}\log n.
\end{equation}
So, 
\begin{equation} 
\E N_{  f_n}(1, \infty)=\E N_{  g_n}(0, 1)  =  \frac{1+o(1)}{2\pi}\log n.\notag
\end{equation}
which completes the proof of Lemma \ref{lm:allrho:1}.

\begin{proof}[Proof of \eqref{eq:uni:ex:1}]
	Let $L = 3(1-x_0)^{-1}$. The same proof as for \eqref{eq:uni:ex} applies for $G$ being a smooth function supported in   $\left [x_0-\frac{1-x_0}{10^6}\mp L^{-1-\alpha}, x_0+\frac{1-x_0}{10^6}\pm L^{-1-\alpha}\right]$. It is left to show that
 	$$\mathcal N_{\tilde g_n} := \sup_{x=1-\Theta(1/L)}\E N_{\tilde g_n} (x - L^{-1-\alpha}, x)\ll L^{-\alpha}.$$ 
  	
 By the Kac-Rice formula, 
	\begin{eqnarray} 
	\E N_{\tilde g_n}[x - L^{-1-\alpha},x] &=&  \frac{1}{\pi} \int_{x - L^{-1 -\alpha}}^{x} \frac{\sqrt{PQ-R^2}}{P} dx\ll \int_{x - L^{-1 -\alpha}}^{x} \sqrt{\frac{Q}{P}} dx, \nonumber
	\end{eqnarray}
	where
	\begin{equation}\label{key}
	P = \sum_{i=0}^{n} d_i^2 x^{2i}\ge \sum_{i=0}^{L} d_i^2 x^{2i}\gg L,\quad Q = \sum_{i=1}^{n} i^2 d_i^2 x^{2i-2}\ll L^{3}. \nonumber
	\end{equation}
	So, 
$\E N_{\tilde g_n}[x - L^{-1}L^{-\alpha},x] \ll \int L \ll L^{-\alpha}
$ as desired.
\end{proof}

\section{Proof of Theorem \ref{thm:main:critical} for the Gaussian case} \label{sec:gau}
\subsection{The number of roots in $[0, 1]$ for Gaussian case} \label{sec:01:Gau}
In this section, we prove \eqref{eq:Gauss:01}. In particular, we prove the following.  
\begin{lemma}  \label{lm:Gaussian:01}  
	Let
	$$f_n(x) = \sum_{m=0}^n c_{m, n} \xi_m x^m, $$
	be a random polynomial where $\xi_m$ are iid standard Gaussian, and $c_{m, n}$ is a deterministic sequence satisfying
	$c_{m, n} =  m^{-1/2}(1+o(1))$, as $m \rightarrow \infty$.
	Then 
	\begin{equation}\label{eq:01:Gauss}
	\E N_n(0, 1) = \frac{1+o(1)}{\pi} \sqrt{\log n}.
	\end{equation}
\end{lemma}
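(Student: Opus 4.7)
My plan is to apply the Kac--Rice formula for Gaussian random polynomials and then carefully evaluate the resulting integral. For a centered Gaussian process the density of real zeros at $x$ is
\begin{equation*}
\rho_n(x) = \frac{1}{\pi}\frac{\sqrt{A(x)C(x) - B(x)^2}}{A(x)},
\end{equation*}
where $A(x)=\sum_{m=0}^n c_{m,n}^2 x^{2m}$, $B(x)=\sum_{m=1}^n m c_{m,n}^2 x^{2m-1}$, and $C(x)=\sum_{m=1}^n m^2 c_{m,n}^2 x^{2m-2}$. So the task reduces to estimating $\int_0^1 \rho_n(x)\,dx$. Using $c_{m,n}^2 = m^{-1}(1+o(1))$ as $m\to\infty$, and separating off finitely many low-order terms, we may work with $A \sim \sum_{m=1}^n x^{2m}/m$, $B\sim \sum_{m=1}^n x^{2m-1}$, $C\sim \sum_{m=1}^n m x^{2m-2}$ up to multiplicative $(1+o(1))$ errors.

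Next I would split $(0,1)$ into three regions and estimate separately. Fix small $\delta>0$.

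\emph{Bulk region} $x\in(0,1-\delta)$: here $A,B,C$ are all bounded by constants (depending on $\delta$), so $\rho_n$ is bounded and the contribution is $O_\delta(1) = o(\sqrt{\log n})$.

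\emph{Main region} $x\in(1-\delta,\, 1-n^{-1}\log n)$: writing $u=1-x^2$, in this range the truncation at $n$ is irrelevant to leading order, so
\begin{equation*}
A(x) = -\log u + O(1),\qquad B(x) = \frac{1}{u}(1+O(u)),\qquad C(x)=\frac{1}{u^2}(1+O(u)).
\end{equation*}
A direct computation gives $A C - B^2 = \frac{-\log u - 1 + o(1)}{u^2}$, hence
\begin{equation*}
\rho_n(x) = \frac{1+o(1)}{\pi\, u\sqrt{-\log u}}.
\end{equation*}
Changing variables to $u=1-x^2$ (so $dx = du/(2\sqrt{1-u})$) and then $v=-\log u$ turns the integral into $\int v^{-1/2}\,dv/(2\pi)$, which between the cutoffs gives exactly $\frac{1+o(1)}{\pi}\sqrt{\log n}$.

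\emph{Boundary region} $x\in(1-n^{-1}\log n,\,1)$: here $x^{2m}$ is essentially $1$ for all $m\le n$, yielding $A\asymp \log n$, $B\asymp n$, $C\asymp n^2$. Thus $\rho_n(x) = O(n/\sqrt{\log n})$, and integrating over an interval of length $O(n^{-1}\log n)$ contributes $O(\sqrt{\log n}\cdot (\log n)/n)=o(\sqrt{\log n})$. (Alternatively, one uses the trivial bound $N(1-n^{-1}\log n,1) \le n$ together with the fact that $f_n$ restricted to such a short interval cannot have too many sign changes in expectation; either approach suffices.)

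The main obstacle will be the careful bookkeeping in the main region: propagating the $(1+o(1))$ multiplicative error from $c_{m,n}^2 = m^{-1}(1+o(1))$ through the squared-covariance quantity $AC-B^2$, where both $A C$ and $B^2$ are individually of order $(\log n)/u^2$ but must be subtracted to produce the correct size $(-\log u)/u^2$. To keep this under control, I would isolate the leading contributions to $A$, $B$, $C$ coming from the regime $m \le 1/u$ (where $x^{2m}$ is $\Theta(1)$) versus $m > 1/u$ (where $x^{2m}$ decays), showing that the decaying tail affects $A C - B^2$ only to relative order $1/\log(1/u)$. The $\Theta(1)$ lower bound in the subcritical case (not needed here) would follow from similar estimates truncated at the appropriate scale.
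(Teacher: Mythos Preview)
Your approach is the same as the paper's (Kac--Rice on a core interval near $1$, with explicit asymptotics $A\sim -\log u$, $B\sim x/u$, $C\sim 1/u^2$), but two points need correction.

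\textbf{Boundary region.} Your arithmetic is off: with $\rho_n(x)=O(n/\sqrt{\log n})$ integrated over an interval of length $n^{-1}\log n$, the contribution is
\[
\frac{n}{\sqrt{\log n}}\cdot\frac{\log n}{n}=\sqrt{\log n},
\]
not $\sqrt{\log n}\cdot(\log n)/n$. So with your cutoff the boundary is the \emph{same order} as the main term and the argument fails. The fix is to place the right cutoff at $1-\omega_n/n$ with $\omega_n\to\infty$ and $\omega_n=o(\log n)$: then the truncation at $m=n$ is still negligible in the main region (relative tail error $\sim e^{-2\omega_n}$), while the boundary contributes $O(\omega_n/\sqrt{\log n})=o(\sqrt{\log n})$. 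The paper takes $\omega_n=\exp((\log n)^{1/5})$ and handles the outside via a separate lemma (Rolle's theorem reduces to a supercritical polynomial, where prior results give $(\log n)^{2/5}$); either route works.

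\textbf{The ``main obstacle'' is not an obstacle.} There is no delicate cancellation in $AC-B^2$. On the main region $-\log u\to\infty$, so $AC\sim(-\log u)/u^2$ while $B^2\sim x^2/u^2=O(1/u^2)$; hence $AC\gg B^2$ and $AC-B^2=AC\,(1+o(1))$. Multiplicative $(1+o(1))$ errors in $A,B,C$ (which is all you get from $c_{m,n}^2=m^{-1}(1+o(1))$) propagate directly to a multiplicative $(1+o(1))$ in $AC-B^2$, with no need to split at $m\sim 1/u$. For this to be uniform you do need $-\log u\to\infty$ on the whole main region, so the \emph{left} cutoff should also tend to $1$ (the paper uses $1-e^{-(\log n)^{1/5}}$); with your fixed $\delta$ the density near $x=1-\delta$ is merely $O(1)$, which is harmless but should be stated.
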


 By Lemma \ref{lm:main interval}, if suffices to restrict to the interval $ \I_n$.
For this, we again use the Kac-Rice formula
\begin{eqnarray}
%	\E N(\I_n) &=& \frac{1}{\pi} \int_{\I_n} \frac{\sqrt{\sum_{0\le i<j\le n} c_{i, n}^{2} c_{j, n}^{2} x^{2i+2j-2} (j-i)^2 }}{ \sum_{i=0}^{n} c_{i, n}^{2} x^{2i}} dx\nonumber\\
	\E N(\I_n)	&=& \frac{1}{\pi} \int_{\I_n} \frac{\sqrt{P(x)Q(x)-R(x)^2}}{P} dx,\label{eq:KR}
\end{eqnarray}
where
\begin{equation}\label{key}
P(x) = \sum_{i=0}^{n} c_{i, n}^2 x^{2i},\quad Q(x) = \sum_{i=1}^{n} i^2 c_{i, n}^2 x^{2i-2},\quad \text{and}\quad  R(x) = \sum_{i=1}^{n} i c_{i, n}^2 x^{2i-1}. \nonumber
\end{equation}
Using the following lemma, we will show that the integrand in \eqref{eq:KR} converges uniformly to a function that we can integrate explicitly. 

\begin{lemma}\label{lemma:In}
Uniformly on the interval $\I_n$, we have (as $n \rightarrow \infty$)
\begin{equation}\label{eq:P}
P(x) = -\log(1-x^2) \left(1+o(1) \right),
\end{equation}
\begin{equation}\label{eq:Q}
Q(x) = \frac{1}{(1-x^2)^2} \left(1+o(1) \right),
\end{equation}
and
\begin{equation}\label{eq:R}
R(x) = \frac{x}{1-x^2} \left(1+o(1) \right).
\end{equation}
\end{lemma}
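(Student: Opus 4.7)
\medskip

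\noindent\textbf{Proof proposal for Lemma \ref{lemma:In}.} The plan is to treat $P$, $Q$, and $R$ uniformly by writing $c_{i,n}^2 = i^{-1}(1+\epsilon_i)$ with $\epsilon_i \to 0$ as $i\to\infty$, so that each of the three series becomes (main series) $+$ (error series) $+$ (contribution from $i=0$). For the main series one recognizes a truncated Taylor expansion of an explicit function of $x$, namely
\begin{equation*}
\sum_{i=1}^{\infty}\frac{x^{2i}}{i}=-\log(1-x^2),\qquad \sum_{i=1}^{\infty} i\, x^{2i-2}=\frac{1}{(1-x^2)^2},\qquad \sum_{i=1}^{\infty} x^{2i-1}=\frac{x}{1-x^2}.
\end{equation*}
The target asymptotics will then follow once one controls (i) the tail $\sum_{i>n}$ that was truncated, and (ii) the error series introduced by $\epsilon_i$.

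First I would dispose of the tail. For $x\in\I_n$, the definition of $\I_n$ forces $1-x\ge \exp((\log n)^{1/5})/n$ and hence $x^{2n}\le \exp(-2\exp((\log n)^{1/5}))$, which decays super-polynomially. Using $\sum_{i>n}\frac{x^{2i}}{i}\le \frac{x^{2n+2}}{(n+1)(1-x^2)}$, and similar geometric estimates for the $Q$ and $R$ tails (via differentiation of $\sum y^i=y/(1-y)$), one checks that each tail is of order $n^{O(1)}e^{-2\exp((\log n)^{1/5})}$, which is $o(1)$ times the corresponding main quantity since on $\I_n$ one has $-\log(1-x^2)\ge(\log n)^{1/5}$ and $(1-x^2)^{-1}\ge \exp((\log n)^{1/5})$.

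Next I would handle the error series. Fix $\eta>0$; by hypothesis there is $M=M(\eta)$ such that $|\epsilon_i|<\eta$ for $i>M$. For $P$, split $\sum_{i=1}^n \epsilon_i x^{2i}/i$ as $\sum_{i\le M}+\sum_{M<i\le n}$: the first sum is bounded by a constant depending only on $M$, and the second is at most $\eta\sum_{i=1}^\infty x^{2i}/i=-\eta\log(1-x^2)$. Dividing by $-\log(1-x^2)$, which tends to infinity uniformly on $\I_n$, yields a quantity bounded by $o(1)+\eta$. Letting $\eta\downarrow 0$ then gives \eqref{eq:P}. The same template works verbatim for $R$ (the first $M$ terms contribute $O(M)$, while the tail is bounded by $\eta\cdot x/(1-x^2)$, and $x/(1-x^2)\to\infty$ on $\I_n$), and for $Q$ where one uses $\sum i\epsilon_i x^{2i-2}\le O(M^2)+\eta/(1-x^2)^2$ together with $(1-x^2)^{-2}\ge \exp(2(\log n)^{1/5})$ on $\I_n$ to absorb the $O(M^2)$ constant.

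I do not expect any real obstacle; the proof is a standard Abel-type summation exercise, and the only point of care is ensuring that the error bounds are uniform in $x\in \I_n$. This is guaranteed by the fact that the three ``reference'' quantities $-\log(1-x^2)$, $x/(1-x^2)$, and $(1-x^2)^{-2}$ all blow up at a rate driven solely by the right-hand endpoint of $\I_n$, while the constants $M(\eta)$ introduced by the control of the $\epsilon_i$ do not depend on $n$ or $x$.
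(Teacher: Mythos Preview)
Your proposal is correct and follows essentially the same approach as the paper: write $c_{i,n}^2=i^{-1}(1+\epsilon_i)$, identify the main series with the Taylor expansions of $-\log(1-x^2)$, $(1-x^2)^{-2}$, and $x/(1-x^2)$, kill the tail $\sum_{i>n}$ via the super-polynomial decay of $x^{2n}$ on $\I_n$, and absorb the error series by splitting at a threshold and using that the reference quantities tend to infinity uniformly on $\I_n$. The only cosmetic difference is that the paper uses the $n$-dependent threshold $\log\log n$ in place of your fixed $M(\eta)$ with $\eta\downarrow 0$; one minor slip in your last paragraph is that the uniform lower bounds on the reference quantities come from the \emph{left} endpoint of $\I_n$ (where $1-x=\exp(-(\log n)^{1/5})$), not the right one, but you already used this correctly earlier when writing $-\log(1-x^2)\ge(\log n)^{1/5}$.
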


Assuming Lemma \ref{lemma:In}, we obtain 
\begin{equation}\label{eq:KR2}
	\E N(\I_n) = (1+o(1)) \frac{1}{\pi} \int_{\I_n} \frac{\sqrt{\frac{-\log(1-x^2)}{(1-x^2)^2 }- \frac{x^2}{(1-x^2)^2}}}{-\log(1-x^2)} dx
\end{equation}
where we could pull out the $1+o(1)$ outside of $PQ - R^{2}$ because from  Lemma \ref{lemma:In}, $PQ-R^{2}\gg PQ + R^{2}$. So, the integral in \eqref{eq:KR2} satisfies the following asymptotic
$$\E N(\I_n) = (1+o(1))\int_{\I_n} \frac{\sqrt{\frac{-\log(1-x^2)}{(1-x^2)^2 }- \frac{x^2}{(1-x^2)^2}}}{-\log(1-x^2)} dx = \left(1+o(1) \right) \int_{\I_n} \frac{1}{2(1-x)\sqrt{-\log(1-x)}} dx,$$
where we have used that, for all $x \in \I_n$, we have $-\log(1-x) \rightarrow \infty$ as $n \rightarrow \infty$, which implies $\log(1-x) + O(1) = \left( 1+o(1) \right) \log(1-x)$.

%Applying this in \eqref{eq:KR2} we obtain
%\begin{equation}\label{eq:KR3}
%\E N(\I_n) =
%\left(1+o(1) \right) \frac{1}{\pi} \int_{\I_n} \frac{1}{2(1-x)\sqrt{-\log(1-x)}} dx .
%\end{equation}
Recall that \begin{equation} 
\I_n = \left [1 - \exp\left (-(\log n)^{1/5}\right ), 1-\exp\left ((\log n)^{1/5}\right )/n\right ]= [1-a_n, 1-b_n].\nonumber
\end{equation}
We have
\begin{align*}
\E N(\I_n) &= (1+o(1))\int_{\I_n} \frac{1}{2(1-x)\sqrt{-\log(1-x)}} dx \\
&= (1+o(1)) \left( \sqrt{-\log(b_n)} - \sqrt{-\log(a_n)} \right) \\
&=(1+o(1)) \sqrt{\log n}
\end{align*}
as desired.

\begin{proof}[Proof of Lemma \ref{lemma:In}]
	First we prove the statement in  \eqref{eq:P}. We have
	\begin{align*}
	P &= \sum_{i=0}^n (1+o_{i}(1))  \frac{1}{i}  x^{2i}   = (1+o(1))\sum_{i=1}^n  \frac{1}{i} x^{2i} + O(\sum_{i=1}^{\log\log n} \frac{1}{i} x^{2i} ).
	\end{align*} 
	We have
	\begin{align*}
	\sum_{i=1}^n  \frac{1}{i} x^{2i} &= -\log(1-x^2) - \sum_{i=n+1}^{\infty} \frac{1}{i} x^{2i} = -\log(1-x^2) (1+o(1))
	\end{align*}
because	
\begin{align*}
  \sum_{i=n+1}^{\infty} \frac{1}{i} x^{2i}  &\leq  x^{2n}\sum_{i=1}^{\infty} \frac{1}{i} x^{2i}  =o\left (-\log(1-x^2) \right).
	\end{align*}
	For the remaining term, we observe that $-\log (1-x^{2})\gg \log n$ for $x\in \I_n$. Thus,
	$$\sum_{i=1}^{\log \log n} \frac{1}{i} x^{2i} \ll \log \log n = o\left (-\log(1-x^2) \right).$$
	 By the same lines of arguments, we obtain \eqref{eq:Q} and \eqref{eq:R}. 
\end{proof}

 \subsection{The number of roots in $(1, \infty)$ for Gaussian case}\label{sec:gau:1inf}
In this section, we prove Equation \eqref{eq:gn:N}. We recall that $g_n(x) = \sum_{m=0}^n d_m \xi'_m x^m$ where $\xi'_m$ are iid standard Gaussian and 
$$d_m =  \frac{c_{n-m}}{c_n}\quad\text{for}\quad c_{m, n} = m^{\rho}(1+o(1))$$
where $\rho\in \R$ is an arbitrary constant.
We need to show that
 \begin{equation}\label{eq:1toinfinity:3}
\E N_{g_n}(\I_n) = (1+o(1))\frac{1}{2\pi} \log n.
\end{equation}
%where \begin{equation} 
%\I_n = \left [1 - \exp\left (-(\log n)^{1/5}\right ), 1-\exp\left ((\log n)^{1/5}\right )/n\right ]= [1-a_n, 1-b_n].\nonumber
%\end{equation}

 To this end, we again use the Kac-Rice formula and obtain \begin{eqnarray}
 \E \tilde N_{g_n}(\I_n) = \frac{1}{\pi} \int_{\I_n} \frac{\sqrt{PQ-R^2}}{P} dx, \nonumber
 \end{eqnarray}
 where
 \begin{equation}\label{key}
 P = \sum_{i=0}^{n} d_i^2 x^{2i},\quad Q = \sum_{i=1}^{n} i^2 d_i^2 x^{2i-2},\quad \text{and}\quad  R = \sum_{i=1}^{n} i d_i^2 x^{2i-1}. \nonumber
 \end{equation}
Setting $n_0 = \frac{n}{\exp{(\frac{1}{2}(\log n)^{1/5}})}$,  we have for $x\in \I_n$, 
 \begin{eqnarray}
 P(x) &=& \sum_{i=0}^{n_0} d_i^2 x^{2i} + \sum_{i=n_0+1}^{n} d_i^2 x^{2i} = (1+o(1))\sum_{i=0}^{n_0} x^{2i} + \sum_{i=n_0+1}^{n} d_i^2 x^{2i}.\nonumber
 \end{eqnarray}
 Since  $d_m = O(n^{|\rho|})$ for all $m$, we have
 \begin{eqnarray}
 \sum_{i=n_0+1}^{n} d_i^2 x^{2i} \ll n^{|\rho|}  \sum_{i=n_0+1}^{n} b_n^{2i} \ll n^{|\rho|} \exp\left (-\exp(\frac{1}{2}\left (\log n)^{1/5}\right )\right ) =o\left (\sum_{i=0}^{n_0} x^{2i}\right ).\nonumber
 \end{eqnarray}
 And so, uniformly for $x\in \I_n$,
  \begin{eqnarray}
 P(x) &=& (1+o(1))\sum_{i=0}^{n_0} x^{2i} = \frac{1+o(1)}{1-x^{2}}=\frac{1+o(1)}{2(1-x)}.\nonumber
 \end{eqnarray}
 Similarly, uniformly for $x\in \I_n$,
 $$Q =\frac{2+o(1)}{(1-x^{2})^{3}} = \frac{1+o(1)}{4(1-x)^{3}} \quad \text{and}\quad R = \frac{1+o(1)}{(1-x^{2})^{2}} =\frac{1+o(1)}{4(1-x)^{2}} $$
 So, 
 $$\E \tilde N_{g_n}(\I_n) = (1+o(1))\frac{1}{2\pi} \int _{\I_n} \frac{dx}{1 - x} =\frac{1+o(1)}{2\pi}\log n.$$

\section{Proof of Theorem \ref{thm:main:sub}}\label{sec:main:sub}
Firstly, we observe that \eqref{eq:sub:1inf} is just a special case of Lemma \ref{lm:allrho:1}. Equation \eqref{eq:sub:R} is a simple corollary of \eqref{eq:sub:01} with $\ell=1$ and \eqref{eq:sub:1inf}. Thus, it remains to show \eqref{eq:sub:01}. By the symmetry observed at the beginning of Section \ref{sec:pf:main}, we can reduce to the interval $[0, 1]$ and show that 
$$\E N^{\ell}_{f_n}[0, 1] = O_{\ell}(1).$$

To this end, we decompose the interval $[0, 1]$ into three intervals $I_1=[0, 1-\frac{1}{C}]$, $I_2=[1-\frac{1}{C}, 1-\frac1n]$ and  $I_3=[1-\frac{1}{n}, 1]$, where $C$ is a large constant (say, $C = 100$). We shall show that in each interval, the number of roots has a bounded $\ell$-moment. 

\subsection{Roots in $I_1=[0, 1-1/C]$} 
Let $k_0 = \lceil -\rho\rceil$. By  Lemma \ref{lm:interlace}, we have
$$\E N^{\ell}_{f_n}(I_1)\le \E (k_0+   N_{f^{(k_0)}_n}(I_1))^{\ell}\le 2^{\ell-1}(k_0^\ell+ \E (N_{f^{(k_0)}_n}(I_1))^\ell).$$

By the choice of $k_0$, we note that $f^{(k_0)}_n$ falls in the super-critical regime which allows us to use available results for this case. In particular, from \cite{nguyenvuCLT}, it follows (see display (65) in section~5.2 therein) that $\E[(N_{f^{(k_0)}_n}(I_1))^\ell]=O_{\ell}(1)$. Hence, $\E[(N_{f_n}(I_1))^\ell]=O_{\ell}(1)$ as desired.

\subsection{Roots in $I_2=(1-\frac{1}{C}, 1-\frac{1}{n}]$} This is the main interval where the expected number of roots is $\Theta(\log n)$ for $\rho>-\frac12$ and our goal is show that it is $O(1)$ for the case $\rho<-\frac12$.  

We shall prove in the next lemma a key estimate about the probability that there are multiple roots in a small interval. We prove this by adapting a classical approach by Erd\H{o}s-Offord \cite{EO} (see also Ibragimov-Maslova \cite{Ibragimov1971expected1}).

\begin{lemma}\label{lm:multipleroot}  
	There exist positive constants $C_{0}, a_1,\lambda$ depending only on $\rho$, $\ep_0$, $C_{2+\ep_0}$ and the rate of convergence of $|c_{m, n}| m^{\rho}$,  such that for any integer $r$, for  all $1\le k \leq n^{\lambda/r}$ and all $x, y \in (1-\frac{1}{C}, 1-\frac{1}{n}]$ satisfying
	$$
	\log\frac{1-x}{1-y} = \frac{1}{2C_0},
	$$
	we have
	\begin{equation}\label{ine:multiple1}
	\P (N_n(x, y)\geq k)\le \frac{2}{( k\log 2+ a_1\log\frac{1}{1-y})^{r}}.
	\end{equation}
	Moreover, for every integer $\ell>0$, there exists a constant $C_{\ell}$ depending only on $\ell, \rho$, $\ep_0$, $C_{2+\ep_0}$, and the rate of convergence of $|c_{m, n}| m^{\rho}$ such that
	\begin{equation}\label{eq:tail:ex}
	\E N^{\ell}_n(x, y)\le C_{\ell}|\log (1-y)|^{\ell}\le  C_{\ell}(\log n)^{\ell}.
	\end{equation}
\end{lemma}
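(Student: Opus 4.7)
The plan is to follow the Jensen-inequality and anti-concentration scheme already deployed in Section~3 to verify Universality Condition \eqref{cond-poly} for the critical case, and adapt it to the subcritical regime so as to obtain a polynomial tail bound on $N_n(x,y)$ of arbitrary degree $r$. Set $L = 1/(1-y)$, $x_0 = (x+y)/2$, $\eta = (y-x)/2 \asymp 1/L$, and $R = 2\eta$; then $(x,y) \subset B(x_0,\eta) \subset B(x_0,R)$. Jensen's inequality \eqref{jensenbound} applied at a point $x_0' \in B(x_0,\eta/2)$ at which $f_n$ does not vanish gives, on the event $\{N_n(x,y) \geq k\}$,
\begin{equation*}
k \log 2 \;\leq\; \log M - \log |f_n(x_0')|, \qquad M := \sup_{z \in B(x_0, R)} |f_n(z)|.
\end{equation*}

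The two ingredients to plug in are a supremum bound and an anti-concentration bound. For the former, exactly as in \eqref{eq:cond:bddn:1}, Markov's inequality on each $|\xi_m|$ yields $\P(M > C\alpha^{-1/2} L^{A_0}) \leq C\alpha$ for an explicit $A_0 = A_0(\rho)$; the power-series tail $\sum_{m\geq 0} m^{\rho}(1-\tfrac{1}{\Theta(L)})^m$ is finite (and $\ll L$) precisely because $\rho<-1/2$, giving a clean $A_0$. For the latter, I would apply Lemma \ref{lmanti_concentration} to the first $N = K\log L$ coefficients, with $K = K(r)$ chosen large depending on the target $r$; conditioning on the tail $(\xi_j)_{j \geq N}$ and averaging over $\theta \in [-\eta/100,\eta/100]$ produces a point $x_0' = x_0 e^{i\theta^*}$ for which
\begin{equation*}
\sup_{Z \in \C}\P\bigl(|f_n(x_0') - Z| \leq \exp(-\tau)\bigr) \;\leq\; C\tau^{-r},
\end{equation*}
valid whenever $\tau$ is polynomially bounded in $L$, which is the mechanism producing the restriction $k \leq n^{\lambda/r}$ after Jensen. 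One must check as a prerequisite that $\bar e := \min_{j \leq N-1} |c_{j,n}| |x_0|^j \gg (\log L)^{-O(1)}$; this is immediate from $|c_{j,n}| \asymp j^{\rho}$ and $|x_0| \geq 1 - O(1/L)$.

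Combining, set $\tau = k\log 2 + a_1\log L$ and choose $\alpha$ so that $C\alpha^{-1/2} L^{A_0}\cdot 2^{-k} \leq \exp(-\tau)$; the union bound then gives $\P(N_n(x,y) \geq k) \leq \alpha + C\tau^{-r} \leq 2\tau^{-r}$, which is precisely \eqref{ine:multiple1}. The moment bound \eqref{eq:tail:ex} follows by the standard tail-to-moment identity $\E N^\ell = \sum_{k\geq 1} \ell k^{\ell-1}\P(N \geq k)$: splitting at $k = a_1 \log L$, the small-$k$ block contributes $O((\log L)^\ell)$, and for large $k$ one picks $r > \ell+1$ so that $\sum_k k^{\ell-1}/k^r$ converges, with the deterministic bound $N \leq n$ handling the tail $k > n^{\lambda/r}$. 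The main obstacle is the calibration of the anti-concentration estimate: to push the exponent $r$ arbitrarily large (needed for \eqref{eq:tail:ex} at arbitrary $\ell$) one must enlarge the conditioning window $K\log L$, and this is precisely what forces the $n^{\lambda/r}$ restriction on the admissible range of $k$ in \eqref{ine:multiple1}.
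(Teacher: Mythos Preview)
Your approach is quite different from the paper's: you use Jensen's root-count bound \eqref{jensenbound} together with a supremum estimate and anti-concentration, whereas the paper follows Erd\H{o}s--Offord and bounds $|f_n(y)|$ by the iterated integral $I_{x,y}=\int_x^y\cdots\int_x^{y_{k-1}}|f_n^{(k)}|$ whenever there are at least $k$ roots in $(x,y)$, then pairs a second-moment estimate for $I_{x,y}$ with a lacunary anti-concentration lemma (not Lemma~\ref{lmanti_concentration}).

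There is, however, a genuine gap in your calibration step. You set $\tau = k\log 2 + a_1\log L$ and require $C\alpha^{-1/2}L^{A_0}\,2^{-k}\le e^{-\tau}=2^{-k}L^{-a_1}$. Solving, this forces $\alpha\ge C^2 L^{2(a_1+A_0)}$, which is \emph{large}, so the union bound $\alpha+C\tau^{-r}\le 2\tau^{-r}$ is impossible. The obstruction is structural: Jensen gives $|f_n(x_0')|\le M\cdot e^{-ck}$ on $\{N\ge k\}$, but $M\ge |f_n(x_0)|$ is typically of order one in the subcritical regime, so the scale at which you can invoke anti-concentration is at best $\Theta(e^{-ck})$, never $L^{-a_1}e^{-ck}$. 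The crucial $a_1\log\frac{1}{1-y}$ term in the denominator of \eqref{ine:multiple1} therefore cannot be produced by this route. In the paper's argument that gain comes from the estimate $\E I_{x,y}^2\ll 4^{-k}(1-y)^{-(2\rho'+1)}$ with $-(2\rho'+1)>0$: the iterated integral is genuinely small \emph{because} $\rho<-\tfrac12$, a smallness that your ratio $M/|f_n(x_0')|$ does not see.

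A second gap is in your derivation of \eqref{eq:tail:ex} via the tail identity. For $k>n^{\lambda/r}$ you only have $N\le n$ and $\P(N\ge n^{\lambda/r})\ll n^{-\lambda}$, which yields $\E N^\ell\mathbf 1_{N>n^{\lambda/r}}\le n^{\ell-\lambda}$; since $\lambda$ is fixed independently of $\ell$, this blows up for large $\ell$. The paper avoids this by proving \eqref{eq:tail:ex} separately: Lemma~\ref{lm:interlace} passes to $f_n^{(k_0)}$ with $k_0=\lceil-\rho\rceil$, landing in the super-critical regime where an existing moment bound from \cite{nguyenvuCLT} applies directly.
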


Assuming Lemma \ref{lm:multipleroot}, let $\delta = 1/(2C_0)$.
We cover $I_2$ by intervals of the form $J_j:=(1-e^{-\delta(j-1)}, 1 - e^{-\delta j}]$ for $j=C, C+1, \dots, C+m-1$ with $m=\frac{1}{\delta}\log n$. From  Lemma \ref{lm:multipleroot}, 
for $1\le k \leq n^{\lambda/r}$,
\begin{equation*} 
	\P (N_{n}(J_j)\geq k)\le \frac{2}{(k\log 2+ja_1\delta )^{r}}.
\end{equation*}
Therefore, for any positive integers $\ell,r$ with $r>\ell+1$,
\begin{align*}
	\E[N_n(J_j)^\ell] &\le \sum_{k=0}^{n^{\lambda/r}}(k+1)^\ell \ \P (N_n(J_j)\ge k)  + \E[N_n(J_j)^{\ell}{\mathbf 1}_{N_n(J_j)\ge n^{\lambda/r}}] \\
	&\le \sum_{k=0}^{n^{\lambda/r}}\frac{2(k+1)^\ell}{(k\log 2+ja_1\delta )^{r}} +  C_\ell\frac{(\log n)^{\ell}}{n^{\lambda/r}} \text{ by \eqref{eq:tail:ex}}\\
	&\le C_\ell\sum_{k=0}^{\infty}\frac{1}{(k+j)^{r-\ell}}+C_\ell\frac{(\log n)^{\ell}}{n^{\lambda/r}} \\
	&\le C_\ell\frac{1}{j^{r-\ell-1}}+C_\ell\frac{(\log n)^{\ell}}{n^{\lambda/r}}.
\end{align*}
We note that the constant $C_{\ell}$ changes from one equation to another. To bound the higher moments of $N_n(J)\le\sum_{j=1}^mN_n(J_j)$, we use H\"{o}lder's inequality to write
$$
N_n(J)^{\ell}\le \left(\sum_{j=1}^m\frac{1}{j^{\ell/(\ell-1)}}\right)^{\ell-1}\left( \sum_{j=1}^m j^\ell N_n(J_j)^\ell\right) \; \le \; C_\ell\sum_{j=1}^m j^\ell N_n(J_j)^\ell.
$$
Therefore,
\begin{align*}
	\E[N_n(J)^\ell] &\le C_\ell \sum_{j=1}^m j^\ell\left(\frac{1}{j^{r-\ell-1}}+\frac{(\log n)^{\ell}}{n^{\lambda/r}}\right) \\
	&\le C_\ell\left(\sum_{j\ge 1}\frac{1}{j^{r-2\ell -1}} + m^{\ell+1}\frac{(\log n)^{\ell}}{n^{\lambda/r}}\right).
\end{align*}
As $m=2 C_0\log n$, choosing $r=2\ell+3$, we see that $\E[N_n(J)^\ell]=O(1)$.

\begin{proof}[Proof of Lemma \ref{lm:multipleroot}]  By Rolle's theorem and the fundamental theorem of calculus, if $f_n$ has at least $k$ zeros in the interval $(x, y)$ then
	$$
	|f_n(y)|\leq\int_{x}^{y}\int_{x}^{y_{1}}\cdots\int_{x}^{y_{k-1}}|f_n^{(k)}(y_{k})|dy_{k}\ldots dy_{1}=:I_{x,y}.
	$$

	Therefore,
	\begin{align*}
	\P (N_n(x, y)\geq k) \quad\leq\quad& \P \left(I_{x,y}\geq\varepsilon_{1}\sqrt{V(y)}\right) \; +\;  \P \left(|f_n(y)|\leq\varepsilon_{1}\sqrt{V(y)}\right)
	\end{align*}
	where $\varepsilon_{1}>0$ to be chosen and 
	\begin{equation}\label{varbound:1}
	V(y)   =\Var f_n(y)=\sum_{i=0}^{n}c_{i}^{2}y^{2i} =\Theta(1) \quad\text{because $\rho<-1/2$}.
	\end{equation}

	We shall show later for some positive constants $C_0, a_1, a_2, \gamma$,
	\begin{equation}\label{eq:int:concentration}
	\P \left(I_{x,y}\geq\varepsilon_{1}\sqrt{V(y)}\right) \le  \ep_1^{-2} 4^{-k} (1-y)^{a_1},
	\end{equation}
	and for any $r>0$, 
	\begin{equation}\label{eq:int:anticoncentration}
	\P \left(|f_n(y)|\leq\varepsilon_{1}\sqrt{V(y)}\right) \le |\log \ep_1|^{-r}
	\end{equation}
	provided $\ep_1\ge Ce^{-n^{\gamma/r}}$ and $\ep_1\le 1/C$.
	
	%  
	%   \begin{equation}\label{eq:int:anticoncentration}
	%  \P \left(|f_n(y)|\leq\varepsilon_{1}\sqrt{V(y)}\right) \le \ep_1^{a_0}.
	%  \end{equation}
	Assuming these bounds, take $\ep_1$ so that the right-hand sides of \eqref{eq:int:concentration} and \eqref{eq:int:anticoncentration} are equal. In particular, take 
	$$
	\ep_1 = 2^{-k} (1-y)^{\frac{a_1}{2}} \ \bigg|\log\left( 4^{-k} (1-y)^{a_1} \right) \bigg|^{\frac{r}{2}}.
	$$  
	For this choice to be valid, we must have $\ep_1\ge Ce^{-n^{\gamma/r}}$, which follows from $1-y\ge \frac{1}{n}$ provided we take  $k\le n^{\lambda/r}$  where $\lambda=\gamma/2$ (say).
	
	We then have the following bound for $k\le n^{\lambda/r}$ and $y<1-\frac{1}{n}$,
	\begin{equation}\label{key}
	\P (N_n(x, y)\geq k)\le 2|\log\ep_1|^{-r} \le \frac{2}{( k\log\frac{1}{C_{0}\delta}+ a_1\log\frac{1}{1-y})^{r}}
	\end{equation}
	completing the proof of \eqref{ine:multiple1}.

	To prove \eqref{eq:tail:ex}, we apply \cite[Lemma 3.1]{nguyenvuCLT} which is a version of Lemma \ref{lm:multipleroot} for the super-critical regime. So, we again use  Lemma \ref{lm:interlace} to reduce to this regime,
	$$\E N^{\ell}_{f_n}(x, y)\le \E (k_0+   N_{f^{(k_0)}_n}(x, y))^{\ell}\le 2^{\ell-1}(k_0^\ell+ \E (N_{f^{(k_0)}_n}(x, y))^\ell)$$
	where $k_0 = \lceil -\rho\rceil$. Now, we use \cite[Lemma 3.1]{nguyenvuCLT} in which we choose $\delta$ to be $1-y$ and $z$ to be $1-3\delta/2$. With these choices and the observation that the complex ball centered at $1-3\delta/2$ and radius $(1-y)/2$ contains the interval $(x, y)$ and so, 
	$$\E (N_{f^{(k_0)}_n}(x, y))^\ell\le C_{\ell}|\log (1-y)|^{\ell},$$
	proving \eqref{eq:tail:ex}.
\end{proof}

\begin{proof} [Proof of \eqref{eq:int:concentration}]
	By Markov's inequality, we have
	\begin{eqnarray}
	&&\left(\varepsilon_{1}\sqrt{V(y)}\right)^{2}\P \left(I_{x,y}\geq\varepsilon_{1}\sqrt{V(y)}\right) \le  \E \left(\int_{x}^{y}\int_{x}^{y_{1}}\cdots\int_{x}^{y_{k-1}}|f_n^{(k)}(y_{k})|dy_{k}\ldots dy_{1}\right)^{2}.\nonumber
	\end{eqnarray}
	By H\"older's inequality, the right-hand side is at most
	$$ \frac{(y-x)^{k}}{k!} \E \int_{x}^{y}\int_{x}^{y_{1}}\cdots\int_{x}^{y_{k-1}}|f_n^{(k)}(y_{k})|^{2}dy_{k}\ldots dy_{1}$$
	and so
	\begin{eqnarray}
	\left(\varepsilon_{1}\sqrt{V(y)}\right)^{2}\P \left(I_{x,y}\geq\varepsilon_{1}\sqrt{V(y)}\right) \le \left(\frac{(y-x)^{k}}{k!}\right)^{2}\sup_{w\in(x,y)}\E |f_n^{(k)}(w)|^{2}. \label{intbd2}
	\end{eqnarray}
	
	We have for some constant $C_0$, 
	\begin{eqnarray*}
		\sup_{w\in(x,y)} \E |f_n^{(k)}(w)|^{2} &\le& C_0\sum_{m=0}^{\infty} m^{2\rho} m^{2}\dots (m-k+1)^{2} y^{2m}\\
		&\le& C_0\sum_{m=0}^{\infty} m^{2\rho'} m^{2}\dots (m-k+1)^{2} y^{2m}\le C_0\sum_{m=0}^{\infty} m^{2\rho'+2k} y^{2m}\\
		&\le&  \frac{C_0^{k}(k!)^{2}}{(1-y)^{2\rho'+2k+1}} 
	\end{eqnarray*}
	where $\rho'$ is any constant in $(-1, -1/2)$ such that $\rho'\ge\rho$. We note that $2\rho'+2k+1\ge 0$ for all $k\ge 1$ as $\rho'>-1$.
	So,
	\begin{eqnarray*}
		\P \left(I_{x,y}\geq\varepsilon_{1}\sqrt{V(y)}\right)&\le&  \ep_{1}^{-2}\cdot \frac{(y-x)^{2k}}{(k!)^{2}} \cdot\frac{C_0^{k}(k!)^{2}}{(1-y)^{2\rho'+2k+1}} \le  \ep_{1}^{-2}\left(C_0\frac{y-x}{1-y}\right)^{2k} (1-y)^{-(2\rho'+1)}\\
		&=&\ep_{1}^{-2}\left(C_0(e^{\delta}-1)\right)^{2k} (1-y)^{-(2\rho'+1)} \le \ep_{1}^{-2}\left(C_0\delta\right)^{2k} (1-y)^{-(2\rho'+1)}
	\end{eqnarray*}
	where the constant $C_0$ may change from one equation to another.
	This proves \eqref{eq:int:concentration} with $a_1 = -(2\rho'+1)>0$.
\end{proof}

\begin{proof}[Proof of \eqref{eq:int:anticoncentration}] We shall use the following result (Lemma~4.2 in \cite{DOV}). 
\begin{lemma} Assume that $\xi_i$ are independent random variables satisfying Assumption-A with parameters $\ep_0, C_{2+\ep_0}$. Then there are constants $C',\alpha$ depending only on $\ep_0, C_{2+\ep_0}$ such that for any complex numbers $b_0,\ldots ,b_n$ containing a lacunary subsequence $|b_{\ell_1}|\ge 2|b_{\ell_2}|\ge \ldots \ge 2^m|b_{\ell_m}|$, and any $z\in \C$, we have 
	$$
	\P\left\{\big |\sum_{i=0}^n b_i\xi_i-z \big|\le |b_{\ell _m}|  \right\}\le C'e^{-\alpha m}.
	$$ 
\end{lemma}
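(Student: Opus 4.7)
The lemma is a standard-looking small-ball estimate for a random linear combination whose coefficient sequence contains a long lacunary subsequence, and the assertion is that the concentration function at scale $|b_{\ell_m}|$ decays exponentially in the length $m$ of that subsequence. My plan is to apply Esseen's concentration inequality to reformulate the bound in Fourier terms and then extract exponential decay in $m$ from the lacunarity by a block argument rather than pointwise.

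Concretely, write $S := \sum_{i=0}^n b_i \xi_i$ and $r := |b_{\ell_m}|$. Since $\phi_S(t) = \prod_i \phi_{\xi_i}(b_i t)$ and each factor is bounded by $1$, Esseen's inequality gives
$$\sup_{z\in\C}\P(|S-z|\le r) \;\le\; C\,r\int_{|t|\le 1/r} |\phi_S(t)|\,dt \;\le\; C\,r\int_{|t|\le 1/r}\prod_{j=1}^m |\phi_{\xi_{\ell_j}}(b_{\ell_j}t)|\,dt.$$
So it suffices to bound the right-hand integral by $C'r^{-1}e^{-\alpha m}$. A preliminary but essential tool here is a uniform pointwise bound $|\phi_\xi(s)|\le \exp(-c_0 s^2)$ valid for all $|s|\le 1$ and for every $\xi$ satisfying Assumption-A; this follows from the Taylor expansion $1-|\phi_\xi(s)|^2 = \E[1-\cos(s(\xi-\xi'))]$ together with a truncation of $\xi$ at scale $\ep_0^{-1/(2+\ep_0)}$, using only the variance and the $(2+\ep_0)$-moment bound.

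The key idea for the exponential factor is that a naive pointwise product bound on $\prod_j|\phi_{\xi_{\ell_j}}(b_{\ell_j}t)|$ only yields a constant $<1$, since for each fixed $t$ only $O(1)$ of the values $|b_{\ell_j}t|$ land in a useful range. To promote this to $e^{-\alpha m}$, I would partition the lacunary indices $\{\ell_1,\dots,\ell_m\}$ into $\lfloor m/K\rfloor$ consecutive blocks of length $K$ (a constant to be chosen). For each block, the coefficients are still lacunary with ratio $\ge 2$, and the smallest coefficient in the $k$-th block has modulus $\ge 2^{(m/K - k)K}r$. Conditioning on the $\xi_{\ell_j}$'s outside a given block and using Levy's concentration-function inequality $Q(X+Y;\varepsilon)\le \min\{Q(X;\varepsilon),Q(Y;\varepsilon)\}$, it suffices to show that for any $K$ independent random variables $\eta_1,\dots,\eta_K$ satisfying Assumption-A and any lacunary coefficients $a_1,\dots,a_K$ with $|a_K|\ge r$,
$$\sup_{z\in\C}\,\P\!\left(\Bigl|\sum_{j=1}^K a_j\eta_j - z\Bigr|\le r\right)\le q$$
for some $q<1$ depending only on $\ep_0$, $C_{2+\ep_0}$, $K$. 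Multiplying these block contributions yields $q^{\lfloor m/K\rfloor}$, which is the claimed $e^{-\alpha m}$.

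The main obstacle will be establishing the per-block factor $q<1$ uniformly over all distributions satisfying only Assumption-A. For Rademacher $\eta_j$'s this is trivial because the $2^K$ possible values of $\sum a_j\eta_j$ are separated by at least $|a_K|\ge r$, so at most $O(1)$ of them lie within any ball of radius $r$. For general distributions -- which may be continuous, almost-lattice, or pathological in other ways -- one cannot argue discretely; instead, applying the characteristic-function bound from the preliminary step at the scale of the smallest coefficient $a_K$ in the block, combined with Esseen again but now at a fixed scale independent of $n$, should deliver $q<1$ with explicit dependence on $\ep_0$ and $C_{2+\ep_0}$. Packaging this uniformity so that no constant implicitly depends on the individual distributions of the $\xi_i$ is the delicate bookkeeping that the full proof must track.
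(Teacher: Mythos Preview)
The paper does not prove this lemma; it is quoted as Lemma~4.2 of \cite{DOV} and used as a black box in the proof of \eqref{eq:int:anticoncentration}. So there is no in-paper argument to compare against, and I assess your outline on its own.

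Your Esseen setup and the uniform bound $|\phi_\xi(s)|\le e^{-c_0 s^2}$ for $|s|\le 1$ are fine, and you correctly diagnose the obstruction: for each fixed $t$ only $O(1)$ of the arguments $|b_{\ell_j}t|$ lie in $[0,1]$, so the integrand is pointwise only a constant below $1$. The gap is in your remedy. The L\'evy inequality $Q(X+Y;\varepsilon)\le\min\{Q(X;\varepsilon),Q(Y;\varepsilon)\}$ applied to the independent block sums $W_1,\dots,W_{\lfloor m/K\rfloor}$ gives
\[
Q(S;r)\ \le\ \min_{k}\,Q(W_k;r)\ \le\ q,
\]
a \emph{single} factor of $q$, not $q^{\lfloor m/K\rfloor}$. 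There is no general step turning ``each block has concentration $\le q$ at scale $r$'' into a product bound for the whole sum; the inequality $Q(X+Y;\varepsilon)\le Q(X;\varepsilon)Q(Y;\varepsilon)$ is simply false. As written, your argument yields only $\P\le q<1$, not exponential decay in $m$. A route that does produce the exponential factor is to symmetrize, write $\xi_{\ell_j}=\varepsilon_j|\xi_{\ell_j}|$ with $\varepsilon_j$ i.i.d.\ signs, use Paley--Zygmund with the $(2+\ep_0)$-moment to get $\P(|\xi_{\ell_j}|\in[\eta,M])\ge p$ uniformly, and condition on the moduli: with probability $1-e^{-cm}$ a positive fraction of the effective coefficients $b_{\ell_j}|\xi_{\ell_j}|$ form a lacunary subsequence of length $\Theta(m)$, and the resulting Rademacher sum over that subsequence has $2^{\Theta(m)}$ values with pairwise gaps $\gg|b_{\ell_m}|$, so only $O(1)$ of them can lie in any ball of radius $|b_{\ell_m}|$. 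That combinatorial count is where the factor $e^{-\alpha m}$ genuinely comes from. (A minor side issue: the $b_i$ are complex, so $\phi_{\xi_i}(b_it)$ is not literally defined; you should first project onto a real direction.)
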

To apply this to our situation, fix $y\in [1-\frac{1}{C_0},1)$. Let $\ell_1=1$ and for $i\ge 1$ define
$$
\ell_{i+1}=\min\{j>\ell_{i}\; : \; |c_{j, n}|y^j\le \frac12 |c_{\ell_i, n}|y^{\ell_i}\}.
$$
For the rest of this proof, we drop the second subscript $n$ for brevity.

Let $b_{i}= c_{i}y^i$. Then $(b_{\ell_i})$ is a lacunary sequence. 
Assume that $C_0\ge 3$ without loss of generality. Since $|c_{k}/c_{k+1}|=1+o(1)$ as $k\to\infty$, we can choose $i_0$ such that $|c_{i}|\le \frac43 |c_{i+1}|$ for $i\ge i_0$. Then (as $\ell_i\ge i$), for $i\ge i_0$,
\begin{equation}\label{eq:b}
|b_{\ell_i}|/2\ge |b_{\ell_{i+1}}|= \left |\frac{c_{\ell_{i+1}}y}{c_{\ell_{i+1}-1}}\cdot c_{\ell_{i+1}-1}y^{\ell_{i+1}-1}\right |\ge \frac43 y\cdot \frac12|b_{\ell_i}|\ge \frac14 |b_{\ell_i}|.
\end{equation}
 The maximum length of $(b_{\ell_i})_{i\ge1}$ that we can use is  $M =\max\{m \ : \ \ell_m\le n\mbox{ and } |b_m|\ge \ep_1\|c\|_2\}$ where $\|c\|^{2}_2=\sum_{i= 0}^{n}c_{i, n}^2\gg 1$ is an upper bound for $V(y)$. By conditioning on all $\xi_j$ for $j\not\in \{\ell_0,\ell_1,\ldots \}$, and applying the above lemma, we see that
$$\P \left(|f_n(y)|\leq\varepsilon_1 \sqrt{V(y)}\right) \le C'e^{-\alpha M}.$$
Note that by \eqref{eq:b}, $|b_M|\gg 4^{-M}|b_1|\gg 4^{-M}$. So, for some positive constant $\gamma'$,
$$e^{-\alpha M}\ll 4^{-\gamma' M}\ll |b_M|^{\gamma'}\ll ((\ep_1\|c\|_2)\vee |c_n|y^n)^{\gamma'}.$$
Thus,
\begin{align*}
\P \left(|f_n(y)|\leq\varepsilon_1 \sqrt{V(y)}\right) &\le   ((\ep_1\|c\|_2)\vee |c_n|y^n)^{\gamma'}  \le C|\log \ep_1|^{-r}
\end{align*}
provided $|c_n|y^n\le |\log \ep_1|^{-r/\gamma'}$. As $|c_n|\sim n^{\rho}$ and $y<1$, this is true for $\ep_1\ge Ce^{-n^{\gamma/r}}$ where $\gamma=- \rho \gamma'$. This completes the proof of \eqref{eq:int:anticoncentration}.
\end{proof}

 \subsection{Roots in $I_3=[1-\frac{1}{n},1]$}  The argument for this part is almost identical to the previous interval $I_2$. We shall obtain an analog of Lemma \ref{lm:multipleroot}.
 \begin{lemma}\label{lm:multipleroot:n}  
 	There exist positive constants $C_{0}, a_1,\lambda$ depending only on $\rho$, $\ep_0$, the rate of convergence of $|c_{m, n}| m^{\rho}$,  such that for any integer $r$ and  all $1\le k \leq n^{\lambda/r}$,
 	we have
 	\begin{equation}\label{ine:multiple1:n}
 	\P (N_n(I_3)\geq k)\le \frac{2}{( k+\log n)^{r}}.
 	\end{equation}
 	Moreover, for every integer $\ell>0$, there exists a constant $C_{\ell}$ depending only on $\ell, \rho$, $\ep_0$, $C_{2+\ep_0}$, and the rate of convergence of $|c_{m, n}| m^{\rho}$ such that
 	\begin{equation}\label{eq:tail:ex:n}
 	\E N^{\ell}_n(I_3)\le C_{\ell}(\log n)^{\ell}.
 	\end{equation}
 \end{lemma}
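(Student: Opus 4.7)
The plan is to imitate the proof of Lemma \ref{lm:multipleroot} with the limiting choice $y=1$, $x = 1-1/n$, exploiting two structural features specific to the sub-critical regime: the variance $V(1) = \sum_m c_{m,n}^2$ is a nonzero constant $\Theta(1)$ (finite because $\rho<-1/2$), and the interval length $y-x = 1/n$ contributes a decisive factor of $n^{-2k}$ in the iterated-integral bound.

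By Rolle's theorem, if $f_n$ has at least $k$ zeros in $I_3$ then $|f_n(1)| \leq I_{x,1}$, so
$$\P(N_n(I_3) \geq k) \leq \P\bigl(|f_n(1)| \leq \ep_1 \sqrt{V(1)}\bigr) + \P\bigl(I_{x,1} \geq \ep_1 \sqrt{V(1)}\bigr).$$
The Markov-plus-H\"older argument from the proof of \eqref{eq:int:concentration}, together with the uniform moment bound
$$\sup_{w \in [x,1]} \E |f_n^{(k)}(w)|^2 \;\leq\; C^k \sum_{m=1}^{n} m^{2\rho+2k} \;\leq\; C^k n^{2\rho+2k+1},$$
yields
$$\P\bigl(I_{x,1} \geq \ep_1 \sqrt{V(1)}\bigr) \;\leq\; \frac{C^k n^{2\rho+1}}{\ep_1^2 (k!)^2}.$$
The factor $n^{2\rho+1}\to 0$ since $\rho<-1/2$, which is the key gain over the corresponding computation on $I_2$. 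For the anti-concentration of $|f_n(1)|$ we repeat the lacunary-subsequence argument from \eqref{eq:int:anticoncentration} with $y=1$: since $|c_{m,n}| \asymp m^\rho$ is eventually monotonically decreasing, a geometrically decreasing subsequence of $(c_{j,n})$ of length $M = \Theta(\log(1/\ep_1))$ exists inside $\{0,\ldots,n\}$, giving
$$\P\bigl(|f_n(1)| \leq \ep_1 \sqrt{V(1)}\bigr) \;\leq\; C|\log \ep_1|^{-r}$$
whenever $\ep_1 \geq C e^{-n^{\gamma/r}}$.

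Choosing $\ep_1$ so that the two bounds are of the same order (concretely, $\ep_1^2 \asymp n^{2\rho+1}|\log \ep_1|^r/(k!)^2$) and using $\log(k!) \geq ck$ for $k \geq 1$ gives $\log(1/\ep_1) \gtrsim \log n + k$, which proves the desired tail bound $\P(N_n(I_3) \geq k) \leq 2(k+\log n)^{-r}$ in the allowed range $k \leq n^{\lambda/r}$ with $\lambda = \gamma/2$. For the moment bound \eqref{eq:tail:ex:n}, we apply Lemma \ref{lm:interlace} with $k_0 = \lceil -\rho \rceil$ to pass to the super-critical derivative $f_n^{(k_0)}$, and invoke \cite[Lemma 3.1]{nguyenvuCLT} on the complex ball $B(1-1/n, 1/n) \supset I_3$ (so the relevant scale $\delta$ satisfies $|\log \delta| \asymp \log n$), obtaining $\E N_{f_n^{(k_0)}}(I_3)^\ell \leq C_\ell(\log n)^\ell$ and hence \eqref{eq:tail:ex:n}.

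The main obstacle is producing the moment bound $\sup_{w \in [x,1]} \E|f_n^{(k)}(w)|^2 \leq C^k n^{2\rho+2k+1}$ without any singularity of the type $(1-y)^{-(2\rho'+2k+1)}$ that appears in the corresponding estimate for $I_2$; at $w=1$ one sums polynomially in $n$, and the resulting factor $n^{2\rho+1}$ is what ultimately drives $\P(N_n(I_3) \geq k)\to 0$. The remainder is a routine transcription of the argument for Lemma \ref{lm:multipleroot}, with the only genuine modification being the handling of the boundary point $y=1$ in the lacunary step, which is straightforward since $V(1)=\Theta(1)$ is bounded away from zero.
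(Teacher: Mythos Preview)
Your proposal is correct and follows essentially the same approach as the paper: set $[x,y]=[1-1/n,1]$, reproduce the Markov--H\"older bound on $I_{x,1}$ and the lacunary anti-concentration bound for $f_n(1)$, then balance $\ep_1$ and handle \eqref{eq:tail:ex:n} via Lemma~\ref{lm:interlace} and \cite[Lemma~3.1]{nguyenvuCLT}. One small technical point: your inequality $\sum_{m\le n} m^{2\rho+2k}\le C^k n^{2\rho+2k+1}$ can fail when $2\rho+2k+1\le 0$ (e.g.\ $\rho<-3/2$, $k=1$); the paper avoids this by replacing $\rho$ with any $\rho'\in(-1,-1/2)$ with $\rho'\ge\rho$, which guarantees a positive exponent for every $k\ge 1$ and leaves the rest of your argument intact.
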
 

Assuming this lemma, we can use the same argument as what follows Lemma \ref{lm:multipleroot} to conclude that 
 \begin{equation} 
 \E N^{\ell}_n(I_3)=O(1)\nonumber
 \end{equation}
 which concludes the proof of Theorem \ref{thm:main:sub}.
 
\begin{proof}[Proof of Lemma \ref{lm:multipleroot:n}]
	 We run the same argument as in the proof of Lemma \ref{lm:multipleroot} for the interval $[x, y] := [1-1/n, 1]$. In particular, we obtain the following analog of \eqref{eq:int:concentration} and \eqref{eq:int:anticoncentration},
	 \begin{equation}\label{eq:int:concentration:n}
	 \P \left(I_{x,y}\geq\varepsilon_{1}\sqrt{V(y)}\right) \le  \ep_1^{-2} \frac{1}{n^{-(2\rho'+1)} (k!)^2},
	 \end{equation}
	 and for any $r>0$, 
	 \begin{equation}\label{eq:int:anticoncentration:n}
	 \P \left(|f_n(y)|\leq\varepsilon_{1}\sqrt{V(y)}\right) \le |\log \ep_1|^{-r}
	 \end{equation}
	 provided $\ep_1\ge Ce^{-n^{\gamma/r}}$ and $\ep_1\le 1/C$.
	 
	 From here, the rest of the proof is identical to that of Lemma \ref{lm:multipleroot}.
\end{proof}

 \textbf{Acknowledgments.} We thank Zakhar Kabluchko for sharing with us the result obtained in Flasche's thesis.

\bibliographystyle{abbrv}
\bibliography{H16}

\begin{thebibliography}{10}

\bibitem{BP}
A.~Bloch and G.~P{\'o}lya.
\newblock On the roots of certain algebraic equations.
\newblock {\em Proceedings of the London Mathematical Society}, 2(1):102--114,
  1932.

\bibitem{Brudnyi1}
A.~Brudnyi.
\newblock Small amplitude limit cycles and the distribution of zeros of
  families of analytic functions.
\newblock {\em Ann. of Math. (2)}, 154(2):227--243, 2001.

\bibitem{Brudnyi2}
A.~Brudnyi.
\newblock A {J}ensen inequality for a family of analytic functions and an
  estimate for the average number of limit cycles.
\newblock {\em Bull. London Math. Soc.}, 35(2):229--235, 2003.

\bibitem{ChLi}
C.~Christopher and C.~Li.
\newblock {\em Limit cycles of differential equations}.
\newblock Advanced Courses in Mathematics. CRM Barcelona. Birkh\"{a}user
  Verlag, Basel, 2007.

\bibitem{Teixeira}
A.~Cima, J.~Llibre, and M.~A. Teixeira.
\newblock Limit cycles of some polynomial differential systems in dimension 2,
  3 and 4, via averaging theory.
\newblock {\em Appl. Anal.}, 87(2):149--164, 2008.

\bibitem{Dembo}
A.~Dembo and S.~Mukherjee.
\newblock No zero-crossings for random polynomials and the heat equation.
\newblock {\em The Annals of Probability}, 43(1):85--118, 2015.

\bibitem{HoiDN}
Y.~Do, H.~Nguyen, and V.~Vu.
\newblock Real roots of random polynomials: expectation and repulsion.
\newblock {\em Proceedings of the London Mathematical Society},
  111(6):1231--1260, 2015.

\bibitem{DOV}
Y.~Do, O.~Nguyen, and V.~Vu.
\newblock Roots of random polynomials with coefficients with polynomial growth.
\newblock {\em Annals of probability}, 46(5):2407--2494, 2018.

\bibitem{EO}
P.~Erd{\"o}s and A.~Offord.
\newblock On the number of real roots of a random algebraic equation.
\newblock {\em Proceedings of the London Mathematical Society}, 3(1):139--160,
  1956.

\bibitem{Farahmand}
K.~Farahmand.
\newblock {\em Topics in random polynomials}, volume 393 of {\em Pitman
  Research Notes in Mathematics Series}.
\newblock Longman, Harlow, 1998.

\bibitem{flaschethesis}
H.~Flasche.
\newblock Real zeros of random analytic functions.
\newblock {\em Ph.D. thesis}, 2018.

\bibitem{FK}
H.~Flasche and Z.~Kabluchko.
\newblock Expected number of real zeroes of random {Taylor} series.
\newblock {\em Communications in Contemporary Mathematics}, 22(07):1950059,
  2020.

\bibitem{flasche2020real}
H.~Flasche and Z.~Kabluchko.
\newblock Real zeroes of random analytic functions associated with geometries
  of constant curvature.
\newblock {\em Journal of Theoretical Probability}, 33(1):103--133, 2020.

\bibitem{Grad}
I.~S. Gradshteyn and I.~M. Ryzhik.
\newblock {\em Table of integrals, series, and products}.
\newblock Elsevier/Academic Press, Amsterdam, eighth edition, 2015.
\newblock Translated from the Russian, Translation edited and with a preface by
  Daniel Zwillinger and Victor Moll, Revised from the seventh edition.

\bibitem{IM1}
I.~A. Ibragimov and N.~B. Maslova.
\newblock The average number of zeros of random polynomials.
\newblock {\em Vestnik Leningrad. Univ}, 23:171--172, 1968.

\bibitem{IM2}
I.~A. Ibragimov and N.~B. Maslova.
\newblock The average number of real roots of random polynomials.
\newblock {\em Soviet Math. Dokl.}, 12:1004--1008, 1971.

\bibitem{Ibragimov1971expected1}
I.~A. Ibragimov and N.~B. Maslova.
\newblock On the expected number of real zeros of random polynomials i.
  coefficients with zero means.
\newblock {\em Theory of Probability \& Its Applications}, 16(2):228--248,
  1971.

\bibitem{IlyashenkoBook}
Y.~Ilyashenko and S.~Yakovenko.
\newblock {\em Lectures on analytic differential equations}, volume~86 of {\em
  Graduate Studies in Mathematics}.
\newblock American Mathematical Society, Providence, RI, 2008.

\bibitem{kabluchko2014asymptotic}
Z.~Kabluchko and D.~Zaporozhets.
\newblock Asymptotic distribution of complex zeros of random analytic
  functions.
\newblock {\em The Annals of Probability}, 42(4):1374--1395, 2014.

\bibitem{Kac1943average}
M.~Kac.
\newblock On the average number of real roots of a random algebraic equation.
\newblock {\em Bulletin of the American Mathematical Society}, 49(1):314--320,
  1943.

\bibitem{LaNa}
A.~Laforgia and P.~Natalini.
\newblock On the asymptotic expansion of a ratio of gamma functions.
\newblock {\em J. Math. Anal. Appl.}, 389(2):833--837, 2012.

\bibitem{LO1}
J.~E. Littlewood and A.~C. Offord.
\newblock On the number of real roots of a random algebraic equation (iii).
\newblock {\em Rec. Math. [Mat. Sbornik]}, 12(3):277--286, 1943.

\bibitem{LO2}
J.~E. Littlewood and A.~C. Offord.
\newblock On the distribution of the zeros and $\alpha$-values of a random
  integral function (i).
\newblock {\em Journal of the London Mathematical Society}, 1(3):130--136,
  1945.

\bibitem{LO3}
J.~E. Littlewood and A.~C. Offord.
\newblock On the distribution of zeros and a-values of a random integral
  function (ii).
\newblock {\em Annals of Mathematics}, pages 885--952, 1948.

\bibitem{LC}
E.~Lundberg.
\newblock Limit cycle enumeration in random vector fields.
\newblock {\em preprint at arXiv:2007.00724}, 2020.

\bibitem{nguyenvuCLT}
O.~Nguyen and V.~Vu.
\newblock Random polynomials: central limit theorems for the real roots.
\newblock {\em Duke Mathematical Journal}, 170(17):3745--3813, 2021.

\bibitem{nguyenvurandomfunction17}
O.~Nguyen and V.~Vu.
\newblock Roots of random functions: A general condition for local
  universality.
\newblock {\em American Journal of Mathematics}, 144(01):1--74, 2022.

\bibitem{Smale1998}
S.~Smale.
\newblock Mathematical problems for the next century.
\newblock {\em Math. Intelligencer}, 20(2):7--15, 1998.

\bibitem{TVpoly}
T.~Tao and V.~Vu.
\newblock Local universality of zeroes of random polynomials.
\newblock {\em International Mathematics Research Notices}, page rnu084, 2014.

\end{thebibliography}

\end{document}